\def\textmatrix#1&#2\\#3&#4\\{\bigl({#1 \atop #3}\ {#2 \atop #4}\bigr)}
\def\dispmatrix#1&#2\\#3&#4\\{\left({#1 \atop #3}\ {#2 \atop #4}\right)}
\newcommand{\beg}{\begin{equation}}
	\newcommand{\eeg}{\end{equation}}
\newcommand{\ben}{\begin{eqnarray*}}
	\newcommand{\een}{\end{eqnarray*}}
\newtheorem{thm}{Theorem}[section]
\newtheorem{cor}[thm]{Corollary}
\newtheorem{lem}[thm]{Lemma}
\numberwithin{equation}{section} 
\theoremstyle{definition}
\newtheorem{defn}[thm]{Definition}
\newtheorem{rem}[thm]{Remark}
\newtheorem{note}[thm]{Note}
\newcommand{\Wbar}{\underline{W}}
\newcommand{\Vbar}{\underline{V}}
\newcommand{\HS}{\mathcal H}
\newcommand{\C}{\mathbb{C}}
\newcommand{\Ccc}{\mathbb{C}^3}
\newcommand{\D}{\mathbb{D}}
\newcommand{\T}{\mathbb{T}}
\newcommand{\E}{\mathbb E}
\newcommand{\ov}{\overline}
\begin{document}
	\title[Decompositions of contractions]
	{Common reducing subspaces and decompositions of contractions}
	
	\author[Sourav Pal]{Sourav Pal}
	\address[Sourav Pal]{Mathematics Department, Indian Institute of Technology Bombay, Powai, Mumbai - 400076, India.}
	
	\email{souravmaths@gmail.com , sourav@math.iitb.ac.in}

	\keywords{ Common reducing subspace, Canonical decomposition, von Neumann-Wold decomposition }
	
	\subjclass[2010]{47A15, 47A25, 47A45, 47A56, 47A65}
	
	\thanks{The author is supported by the Seed Grant of IIT Bombay, the CPDA of the Govt. of India and the MATRICS Award of SERB, (Award No. MTR/2019/001010) of DST, India.}

\begin{abstract}

A commuting triple of Hilbert space operators $(A,B,P)$, for which the closed tetrablock $\overline{\mathbb E}$ is a spectral set, is called a \textit{tetrablock-contraction} or simply an $\mathbb E$-\textit{contraction}, where
\[
\E=\{(x_1,x_2,x_3)\in \C^3:\, 1-x_1z-x_2w+x_3zw \neq 0 \quad \text{ whenever } \; |z|\leq 1, \; \; |w|\leq 1  \} \subset \C^3,
\]
is a polynomially convex domain which is naturally associated with the $\mu$-synthesis problem. By applications of the theory of $\mathbb E$-contractions, we obtain the following main results in this article.\\

\begin{enumerate}

\item It is well known that any number of doubly commuting isometries admit an Wold-type decomposition. Also, examples from the literature show that such decomposition does not hold in general for commuting isometries. We generalize this result to any number of doubly commuting contractions.  Indeed, we present a canonical decomposition to any family of doubly commuting contractions $\mathcal U =\{T_{\lambda}: \lambda \in \Lambda \}$ acting on a Hilbert space $\mathcal H$ and show that $\mathcal H$ continues to decompose orthogonally into joint reducing subspaces of $\mathcal U$ until all members of $\mathcal U$ split together into unitaries and completely non-unitary (c.n.u.) contractions. Naturally, as a special case we achieve an analogous Wold decomposition for any number of doubly commuting isometries.\\

\item Further, we show that a similar canonical decomposition is possible for any family of doubly commuting c.n.u. contractions, where all members jointly orthogonally decompose into pure isometries (i.e. unilateral shifts) and completely non-isometry (c.n.i.) contractions.\\

\item We give an alternative proof to the canonical decomposition of an $\mathbb E$-contraction and apply that to establish independently the following result due to Eschmeier: for any finitely many commuting contractions $T_1, \dots, T_n$ acting on a Hilbert space $\mathcal H$, the space $\mathcal H$ admits an orthogonal decomposition $\mathcal H=\mathcal H_1 \oplus \mathcal H_2$, where $\mathcal H_1$ is the maximal common reducing subspace for $T_1, \dots, T_n$ such that $T_1|_{\mathcal H_1}, \dots , T_n|_{\mathcal H_1}$ are unitaries.

\end{enumerate}

\end{abstract}

\maketitle

\tableofcontents

\section{Introduction}

\vspace{0.2cm}
	
\noindent Throughout the paper, all operators are bounded linear operators acting on complex Hilbert spaces. A contraction is an operator with norm not greater than $1$. We denote by $\C, \D, \T$ the complex plane, the unit disk and the unit circle in the complex plane respectively with center at the origin. \\

\subsection{Motivation.} Finding a common invariant or reducing subspace for a family of commuting operators has always been of independent interest amongst the operator theorists and the first fundamental discovery in this direction was the von Neumann-Wold decomposition of an isometry.

\begin{thm}[\cite{Wold}, von Neumann-Wold]\label{Wold:decomposition}
Let $V$ be an isometry on a Hilbert space $\HS$. Then $\HS$ decomposes into an orthogonal sum $\HS= \HS_0 \oplus \HS_1$ such that $\HS_0$ and $\HS_1$ reduce $V$ and that $V|_{\HS_0}$ is a unitary and $V|_{\HS_1}$ is a pure isometry, i.e., a unilateral shift. This decomposition is uniquely determined; indeed we have
\[
\HS_0=\cap_{n=0}^{\infty} V^n \HS \text{ and } \HS_1=M_+(\mathcal L) \quad \text{ where }\; \; \mathcal L = \HS \ominus V\HS \text{ and } \; M_{+}(\mathcal L)= \oplus_{n=0}^{\infty}V^n\mathcal L.
\]
\end{thm}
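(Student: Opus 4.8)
The plan is to build the decomposition explicitly from the wandering subspace $\mathcal L = \HS \ominus V\HS$. First I would observe that, because $V$ is an isometry, $V^*V = I$ and $VV^*$ is the orthogonal projection onto the (closed) range $V\HS$; hence $\mathcal L = \ker V^*$ and one has the orthogonal splitting $\HS = \mathcal L \oplus V\HS$. Applying $V$ repeatedly to this identity and using that $V$ preserves orthogonality, I would obtain for each $n$ the finite orthogonal decomposition
\[
\HS = \mathcal L \oplus V\mathcal L \oplus \cdots \oplus V^{n-1}\mathcal L \oplus V^n\HS.
\]
Letting $n \to \infty$ then yields the orthogonal sum $\HS = \left( \oplus_{n=0}^{\infty} V^n\mathcal L \right) \oplus \left( \cap_{n=0}^{\infty} V^n\HS \right)$, which identifies the two candidate summands as $\HS_1 = M_+(\mathcal L)$ and $\HS_0 = \cap_{n=0}^{\infty} V^n\HS$.

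Next I would verify the structural claims on each piece. The subspace $\HS_1$ is manifestly $V$-invariant, and since $V$ maps $V^n\mathcal L$ isometrically onto $V^{n+1}\mathcal L$, the restriction $V|_{\HS_1}$ is a unilateral shift of multiplicity $\dim \mathcal L$; its purity follows from $\cap_n (V|_{\HS_1})^n \HS_1 \subseteq \HS_0 \cap \HS_1 = \{0\}$. For $\HS_0$, the key observation is that $\HS \supseteq V\HS \supseteq V^2\HS \supseteq \cdots$ is a decreasing chain, so $V\HS_0 = \cap_{n\geq 1} V^n\HS = \cap_{n\geq 0} V^n\HS = \HS_0$; thus $V|_{\HS_0}$ is a surjective isometry, hence unitary. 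Since $\HS_0$ and $\HS_1 = \HS_0^\perp$ are each $V$-invariant, both reduce $V$.

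Finally, for uniqueness I would start from any second decomposition $\HS = \HS_0' \oplus \HS_1'$ with $V|_{\HS_0'}$ unitary and $V|_{\HS_1'}$ pure. Because $V|_{\HS_0'}$ is unitary one has $V^n\HS_0' = \HS_0'$ for all $n$, while purity of the shift on $\HS_1'$ gives $\cap_n (V|_{\HS_1'})^n\HS_1' = \{0\}$. Intersecting the reduced decomposition $V^n\HS = \HS_0' \oplus V^n\HS_1'$ over all $n$ then forces $\cap_n V^n\HS = \HS_0'$, i.e.\ $\HS_0' = \HS_0$ and consequently $\HS_1' = \HS_1$.

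I expect the main obstacle to be less any single computation than making the limiting step $\HS = \HS_1 \oplus \HS_0$ fully rigorous: one must justify that the orthogonal complement of the growing sum $\oplus_{k=0}^{n-1}V^k\mathcal L$ shrinks exactly to $\cap_n V^n\HS$, which rests on the orthogonality of the wandering spaces $\{V^n\mathcal L\}$ and on $V^n\HS$ being the closed complement of $\oplus_{k=0}^{n-1} V^k\mathcal L$ at each finite stage. The uniqueness step's reliance on purity to annihilate the shift part under the intersection is the one genuinely nonformal idea, and I would take particular care there.
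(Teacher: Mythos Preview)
Your argument is the standard, correct proof of the von Neumann--Wold decomposition: building the finite orthogonal decompositions $\HS = \bigoplus_{k=0}^{n-1} V^k\mathcal L \oplus V^n\HS$ by iterating $\HS = \mathcal L \oplus V\HS$, passing to the limit to obtain $\HS_0 = \bigcap_n V^n\HS$ and $\HS_1 = M_+(\mathcal L)$, and then checking reduction, unitarity on $\HS_0$, purity on $\HS_1$, and uniqueness. Each step you outline goes through, and the point you flag as delicate---that the complement of $\bigoplus_{k<n} V^k\mathcal L$ is exactly $V^n\HS$ at every finite stage, so the limiting complement is the intersection---is precisely the place where care is needed, and your reasoning handles it correctly.

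There is, however, nothing in the paper to compare your approach against: Theorem~\ref{Wold:decomposition} is quoted from \cite{Wold} as a classical background result in the introduction and is not given a proof anywhere in the paper. The authors simply state it and move on to its generalizations (Theorems~\ref{thm:CD} and~\ref{thmlv1}) and their own multivariable analogues. So your proposal is not an alternative to the paper's proof but rather a self-contained proof of a result the paper takes for granted.
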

Later, the novel idea of determining the maximal unitary part was generalized for any contraction, which in the literature was named the \textit{canonical decomposition} of a contraction.

\begin{thm}[\cite{Nagy}, Chapter-I, Theorem 3.2] \label{thm:CD}
To every contraction $T$ on a Hilbert space $\HS$, there corresponds a decomposition of $\HS$ into an orthogonal sum of two subspaces reducing $T$, say $\HS=\HS_0\oplus \HS_1$ such that $T|_{\HS_0}$ is unitary and $T|_{\HS_1}$ is a completely non-unitary (c.n.u.) contraction; $\HS_0$ or $\HS_1$ may coincide with the trivial subspace $\{0 \}$. This decomposition is uniquely determined. Indeed $\HS_0$ consists of those elements $h\in \HS$ for which
\[
\|T^nh\|=\|h\|=\|T^{*n}h\| \qquad (n=1,2,\dots).
\]
\end{thm}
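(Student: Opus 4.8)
The plan is to define $\HS_0$ directly by the norm condition appearing in the theorem and then verify each asserted property in turn. Concretely, set
\[
\HS_0 = \{h \in \HS : \|T^n h\| = \|h\| = \|T^{*n}h\| \text{ for all } n \geq 1\}.
\]
The first task is to recast this purely metric description in linear-algebraic form. Since $T$ is a contraction, each of $I - T^{*n}T^n$ and $I - T^n T^{*n}$ is a positive operator, and the identity $\|h\|^2 - \|T^n h\|^2 = \langle (I - T^{*n}T^n)h, h\rangle$ shows that $\|T^n h\| = \|h\|$ is equivalent to $T^{*n}T^n h = h$ (and symmetrically for the adjoint). Hence $\HS_0$ is the intersection, over all $n$, of the kernels of the positive operators $I - T^{*n}T^n$ and $I - T^n T^{*n}$, and is therefore a closed linear subspace. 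I expect this reformulation to be the conceptual crux: without it the defining condition is merely a metric one and the linearity of $\HS_0$ is not transparent.

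Next I would show that $\HS_0$ reduces $T$. By the symmetry of the definition under $T \leftrightarrow T^*$, it suffices to prove $T\HS_0 \subseteq \HS_0$. Fix $h \in \HS_0$; then in particular $T^*T h = h$ and $\|T^{*k}h\| = \|h\|$ for every $k \geq 0$. A short induction then gives $T^{*n}(Th) = T^{*(n-1)}h$ for all $n \geq 1$, whence $\|T^{*n}(Th)\| = \|h\| = \|Th\|$, while the condition $\|T^n(Th)\| = \|Th\|$ is immediate from $\|T^{n+1}h\| = \|h\|$. Thus $Th \in \HS_0$, and the same argument applied to $T^*$ yields $T^*\HS_0 \subseteq \HS_0$, so $\HS_0$ reduces $T$. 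Setting $\HS_1 = \HS \ominus \HS_0$ and using that $(T|_{\HS_0})^* = T^*|_{\HS_0}$ on a reducing subspace, the relations $T^*T h = h$ and $TT^* h = h$ for $h \in \HS_0$ show at once that $T|_{\HS_0}$ is unitary.

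It remains to verify that $T|_{\HS_1}$ is c.n.u.\ and that the decomposition is unique; both reduce to the single observation that $\HS_0$ is the \emph{maximal} reducing subspace on which $T$ acts unitarily. Indeed, if $\mathcal K$ reduces $T$ and $T|_{\mathcal K}$ is unitary, then for $h \in \mathcal K$ the adjoint of $T|_{\mathcal K}$ is $T^*|_{\mathcal K}$, so $\|T^n h\| = \|h\| = \|T^{*n}h\|$ and hence $\mathcal K \subseteq \HS_0$. Applying this to any putative unitary reducing part $\mathcal K \subseteq \HS_1$ forces $\mathcal K \subseteq \HS_0 \cap \HS_1 = \{0\}$, proving $T|_{\HS_1}$ is c.n.u. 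For uniqueness, given another such decomposition $\HS = \HS_0' \oplus \HS_1'$, the maximality gives $\HS_0' \subseteq \HS_0$; then $\HS_0 \ominus \HS_0'$ is a reducing subspace contained in $\HS_1'$ on which $T$ is unitary, so it is trivial and $\HS_0' = \HS_0$, whence $\HS_1' = \HS_1$. I anticipate the only delicate points are the correct bookkeeping of the adjoint on reducing subspaces and the induction in the second paragraph; the remaining steps are routine.
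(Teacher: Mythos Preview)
The paper does not supply its own proof of this theorem; it is quoted verbatim as a known result from Sz.-Nagy--Foias and used as background. Your argument is correct and is essentially the classical proof from that reference: recast the norm condition as $h\in\bigcap_n\ker(I-T^{*n}T^n)\cap\ker(I-T^nT^{*n})$, check invariance under $T$ and $T^*$, and deduce maximality, c.n.u.\ of the complement, and uniqueness from it.
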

It is merely mentioned that the orthogonal parts of a contraction are easier to investigate than the original operator. In \cite{N:L}, Levan found a refinement of Theorem \ref{thm:CD} by splitting a c.n.u. contraction further into two orthogonal parts.

\begin{thm}[\cite{N:L}, Theorem 1]\label{thmlv1}
    With respect to a c.n.u. contraction $T$ on $\mathcal{H}$, $\mathcal{H}$ admits the unique orthogonal decomposition
    $\mathcal{H} = \mathcal{H}_1\oplus\mathcal{H}_2$,
    such that $T|_{\mathcal{H}_1}$ is a pure isometry
    and $T|_{\mathcal{H}_2}$ is a completely non-isometry $($c.n.i.$)$ contraction.
\end{thm}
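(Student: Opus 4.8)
The plan is to mirror the construction of the canonical decomposition (Theorem \ref{thm:CD}), replacing the defect of unitarity by the defect of isometry. Write $D_T=(I-T^*T)^{1/2}$ for the defect operator of $T$, and recall that for a closed subspace $\mathcal R$ reducing $T$, the restriction $T|_{\mathcal R}$ is an isometry precisely when $\mathcal R\subseteq\ker D_T$. I would therefore define $\mathcal H_1$ to be the closed linear span of all reducing subspaces $\mathcal R$ of $T$ on which $T|_{\mathcal R}$ is an isometry. Since the closed span of reducing subspaces is again reducing, $\mathcal H_1$ reduces $T$; and since each such $\mathcal R$ lies in the closed subspace $\ker D_T$, so does their closed span, whence $\mathcal H_1\subseteq\ker D_T$ and $T|_{\mathcal H_1}$ is an isometry. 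By construction $\mathcal H_1$ is the largest reducing subspace on which $T$ acts isometrically. Set $\mathcal H_2=\mathcal H\ominus\mathcal H_1$, which also reduces $T$.

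The central step is to upgrade ``isometry'' to ``pure isometry'' on $\mathcal H_1$, and this is exactly where the hypothesis that $T$ is c.n.u. enters. I would apply the von Neumann--Wold decomposition (Theorem \ref{Wold:decomposition}) to the isometry $T|_{\mathcal H_1}$, writing $\mathcal H_1=\mathcal K_u\oplus\mathcal K_s$ with $\mathcal K_u=\bigcap_{n\ge 0}T^n\mathcal H_1$ the unitary part and $\mathcal K_s$ the shift part. Because $\mathcal H_1$ reduces $T$, the adjoint of $T|_{\mathcal H_1}$ is the restriction $T^*|_{\mathcal H_1}$, so any subspace reducing $T|_{\mathcal H_1}$ reduces $T$ as well; in particular $\mathcal K_u$ reduces $T$ and $T|_{\mathcal K_u}$ is unitary. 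Since $T$ is completely non-unitary this forces $\mathcal K_u=\{0\}$, so $T|_{\mathcal H_1}$ is a pure isometry. I expect this to be the main obstacle, the delicate point being to verify that the unitary summand produced inside $\mathcal H_1$ is genuinely reducing for $T$ on all of $\mathcal H$ (not merely for $T|_{\mathcal H_1}$), which the reducing property of $\mathcal H_1$ supplies.

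It remains to see that $T|_{\mathcal H_2}$ is completely non-isometric and that the decomposition is unique, both of which follow formally from the maximality of $\mathcal H_1$. If $\mathcal N\subseteq\mathcal H_2$ were a nonzero reducing subspace for $T|_{\mathcal H_2}$ on which $T$ is an isometry, then $\mathcal N$ would reduce $T$ and would belong to the family defining $\mathcal H_1$, forcing $\mathcal N\subseteq\mathcal H_1\cap\mathcal H_2=\{0\}$, a contradiction; hence $T|_{\mathcal H_2}$ is c.n.i. For uniqueness, suppose $\mathcal H=\mathcal H_1'\oplus\mathcal H_2'$ is another such decomposition. Then $\mathcal H_1'$ is a reducing isometric subspace, so $\mathcal H_1'\subseteq\mathcal H_1$; and $\mathcal H_1\ominus\mathcal H_1'$ is a subspace reducing $T$, contained in $\ker D_T$ and in $\mathcal H_2'$, hence a reducing subspace of the c.n.i. contraction $T|_{\mathcal H_2'}$ on which $T$ is isometric, so it equals $\{0\}$. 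Thus $\mathcal H_1=\mathcal H_1'$ and consequently $\mathcal H_2=\mathcal H_2'$, which completes the argument.
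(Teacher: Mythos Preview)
Your argument is correct. Note, however, that the paper does not supply its own proof of this statement: Theorem~\ref{thmlv1} is simply quoted from Levan \cite{N:L} as background, so there is no in-paper proof to compare against. Your approach---taking $\mathcal H_1$ to be the maximal reducing subspace on which $T$ is isometric, then invoking the Wold decomposition together with the c.n.u.\ hypothesis to kill the unitary summand---is the standard one and is essentially Levan's original argument. The only point worth flagging is the one you yourself identified: that a subspace reducing $T|_{\mathcal H_1}$ automatically reduces $T$ on all of $\mathcal H$ because $(T|_{\mathcal H_1})^*=T^*|_{\mathcal H_1}$; you handled this correctly.
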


There are several analogues of Wold decomposition for commuting isometries and perhaps the most appealing among them is the one achieved by Berger, Coburn and Lebow in \cite{B-C-Lb}: if $V_1, \dots , V_n$ are commuting isometries on a Hilbert space $\HS$ and if $V=\prod_{i=1}^n V_i$, then the Wold decomposition of $V$ reduces each $V_i$, the tuple $(V_1|_{\HS_1}, \dots , V_n|_{\HS_1})$ consists of unitaries and $(V_1|_{\HS_2}, \dots , V_n|_{\HS_2})$ consists of isometries (whose product is the pure isometry $V|_{\HS_2}$). Later Bercovici, Douglas and Foias generalized this resut by introducing model $n$-isometries, \cite{B-D-F1}. In \cite{M.Slo}, Slocinski established an analogue of Theorem \ref{Wold:decomposition} for a pair of doubly commuting isometries via an independent approach. There the underlying Hilbert space orthogonally split into four parts and the two isometries simultaneously act on them as different combinations of unitaries and unilateral shifts. Slocinski's decomposition was sharp in the sense that no further nontrivial decomposition was possible. It was also shown in \cite{M.Slo, B-K-S} by examples that such decomposition was not possible in general for commuting isometries. Slocinski's result was further generalized in \cite{B-K-S} for any number of doubly commuting isometries with finite dimensional wandering subspaces. Also, we witness canonical decomposition for commuting contractions under certain conditions, see \cite{Al-Pt, Burdak, C-P-S, K:C1, D:P, W.Szy-1} and the references therein. Eschmeier's work \cite{J-Es} contributes remarkably to this program. Indeed, for finitely many commuting contractions $T_1, \dots , T_n$, he could extract the maximal common reducing subspace on which $T_1, \dots , T_n$ act as unitaries.

\subsection{Background material and a brief description of the main results.} 
We establish the results of this article by using heavily the operator theory on the tetrablock. The tetrablock $\E$, where
\begin{equation*}
\E=\{(x_1,x_2,x_3)\in \C^3:\, 1-x_1z-x_2w+x_3zw \neq 0 \quad \text{ whenever } \; |z|\leq 1, \; \; |w|\leq 1  \},
\end{equation*}
 was introduced in \cite{A:W:Y} to study a special case of the famous $\mu$-synthesis problem (see e.g. \cite{Doyle} for a further reading on the $\mu$-synthesis problem). An interested reader is referred to the last section of \cite{A:W:Y}, where there is a clear discussion on the origin of the tetrablock and how it is naturally associated with the interpolation in the $\mu$-unit ball of $\mathcal M_2(\C)$ generated by the $2 \times 2$ diagonal matrices. Indeed, if $E$ is the space of all $2 \times 2$ diagonal complex matrices, then for any complex matrix $A=[a_{ij}]_{2\times 2}$, $\mu_E(A)<1$ if and only if $(a_{11}, a_{22}, \det A) \in \E$.
\begin{defn}
A triple of commuting Hilbert space operators $(A,B,P)$ is called a \textit{tetrablock-contraction} or simply an $\E$-\textit{contraction} if the closed tetrablock $\ov{\E}$ is a spectral set for $(A,B,P)$, that is, the Taylor joint spectrum $\sigma_T(A,B,P) \subseteq \ov{\E}$ and von Neumann inequality
\[
\|f(A,B,P)\|\leq \sup_{z\in\ov{\E}} |f(z)|=\|f\|_{\infty,\; \ov{\E}} 
\]
holds for every complex-valued rational function $f=\dfrac{p}{q}$ with $p,q \in \C[z_1,z_2,z_3]$, such that $q$ does not have any zero inside $\ov{\E}$. Here $f(A,B,P)=p(A,B,P)q(A,B,P)^{-1}$.
\end{defn}

Our first main result is a fine canonical decomposition for any family of doubly commuting contractions. Indeed, in Theorem \ref{thm:DECOMP11}, we show that with respect to any family of doubly commuting contractions $\mathcal U =\{T_{\lambda}: \lambda \in \Lambda \}$ acting on a Hilbert space $\mathcal H$, $\mathcal H$ continues to decompose orthogonally into joint reducing subspaces of $\mathcal U$ until all members of $\mathcal U$ split together into unitaries and completely non-unitary (c.n.u.) contractions. Naturally, as a special case we obtain a von Neumann-Wold decomposition for any number of doubly commuting isometries in Section \ref{sec:6}.

Second, we revisit canonical decomposition of an $\E$-contraction from \cite{S:P-tetra3} and provide an alternative proof to it in Theorem \ref{thm:E-decomp1}. We apply this decomposition to Theorem \ref{thm:decomp1} and find a canonical decomposition for any finitely many commuting contractions, which is an analogue of Theorem \ref{thm:CD}. As a consequence, a Wold-type decomposition for finitely many commuting isometries is achieved in Theorem \ref{thm:Wold1}.

Along with a few preparatory results, the rest of the paper deals with an analogue of Levan's theorem (Theorem \ref{thmlv1}) for any family of doubly commuting c.n.u. contractions. This is presented as Theorem \ref{thm:DECOMP12}.

In Section \ref{sec:2}, we provide a few useful results about operator theory on the tetrablock and also a few definitions and terminologies which will be followed throughout the paper.

Note that we deal with two separate cases, the finite and the infinite ones, when we establish our results for doubly commuting operators. The reason is that some of the proofs in the infinite case use arguments from the finite case. Sincere thanks are due to Professor Hari Bercovici who has made several invaluable comments on this article.

\vspace{0.2cm}

		\section{Definitions, terminologies and preparatory results} \label{sec:2}
	
	\vspace{0.2cm}
	
\noindent Recall that a contraction is an operator whose norm is not greater than $1$. Here we recollect from the literature a few important classes of contractions and introduce some new terminologies for our purposes.

\begin{defn}

A contraction $T$ acting on a Hilbert space $\HS$ is said to be
\begin{itemize}
\item[(i)] a \textit{completely non-unitary contraction} or simply a \textit{c.n.u. contraction} if there is no closed linear subspace of $\HS$ that reduces $T$ and on which $T$ is a unitary ;

\item[(ii)] a \textit{completely non-isometry contraction} or simply a \textit{c.n.i. contraction} if there is no closed linear subspace of $\HS$ that reduces $T$ and on which $T$ is an isometry ;

\item[(iii)] a \textit{pure contraction} or a $C._0$ \textit{contraction} if ${T^*}^nh \rightarrow 0$ as $n \rightarrow \infty $ for every $h\in\HS $ ;

\item[(iv)] a \textit{pure isometry} or a \textit{unilateral shift} if $T$ is an isometry which is also a pure contraction.

\end{itemize}

\end{defn}

\begin{defn}
Let $\mathcal S =\{ T_{\lambda}\,:\, \lambda \in J \}$ be a tuple of commutative Hilbert space contractions. Then $\mathcal S$ is said to be
\begin{itemize}

\item[(i)] \textit{doubly commuting} if $T_{\lambda}T_{\mu}^*=T_{\mu}^*T_{\lambda}$ for each $\lambda , \mu \in J$ with $\lambda \neq \mu$;

\item[(ii)] a \textit{totally unitary tuple} if each $T_{\lambda}$ is a unitary; a \textit{totally isometric tuple} if each $T_{\lambda}$ is an isometry; a \textit{totally c.n.u. tuple} if each $T_{\lambda}$ is a c.n.u. contraction and a \textit{totally c.n.i. tuple} if each $T_{\lambda}$ is a c.n.i. contraction;

\item[(iii)] a \textit{completely non-unitary tuple} or simply a \textit{c.n.u. tuple} if at least one $T_{\lambda}$ is a c.n.u. contraction;

\item[(iv)] a \textit{completely non-isometry tuple} or simply a \textit{c.n.i. tuple} if at least one $T_{\lambda}$ is a c.n.i. contraction;

\item[(v)] a \textit{pure isometry tuple} if all $T_{\lambda}$'s are isometries and at least one of them is a pure isometry, that is, a unilateral shift.

\end{itemize}

\end{defn}	

\vspace{0.2cm}

\subsection{Operator theory on the tetrablock}

\noindent We establish the results of this article by using heavily the operator theory on the tetrablock. The tetrablock $\E$, where
\[
\E=\{(x_1,x_2,x_3)\in \C^3:\, 1-x_1z-x_2w+x_3zw \neq 0 \quad \text{ whenever } \; |z|\leq 1, \; \; |w|\leq 1  \},
\]
 was introduced in \cite{A:W:Y} to study a special case of the famous $\mu$-synthesis problem (see e.g. \cite{Doyle} for a further reading on the $\mu$-synthesis problem). An interested reader is referred to the last section of \cite{A:W:Y}, where there is a clear discussion on the origin of the tetrablock and how it is naturally associated with the interpolation in the $\mu$-unit ball of $\mathcal M_2(\C)$ generated by the $2 \times 2$ diagonal matrices. Indeed, if $E$ is the space of all $2 \times 2$ diagonal complex matrices, then for any complex matrix $A=[a_{ij}]_{2\times 2}$, $\mu_E(A)<1$ if and only if $(a_{11}, a_{22}, \det A) \in \E$. The domain $\E$ is polynomially convex but non-convex (see \cite{A:W:Y}) and has been extensively studied in past two decades both from complex analytic and operator theoretic perspectives (see \cite{A:W:Y, T:B, S:P-tetra1} and references there in). In this section, we recollect from the literature a few operator theoretic results on the tetrablock which will be used in sequel. We begin with a theorem which provides a set of characterizations for the points in the tetrablock and its closure $\ov{\E}$.

\begin{thm}[\cite{A:W:Y}, Theorems 2.2 \& 2.4]\label{thm:21}
Let $(x_1,x_2,x_3)\in \Ccc$. Then the following are equivalent.

\begin{enumerate}

\item $(x_1,x_2,x_3) \in \E \quad (\text{respectively, } \in \overline{\mathbb E})$ ;

\item $|x_1-\bar x_2 x_3|+|x_1x_2-x_3|< 1-|x_2|^2 \;$ (respectively, $\leq 1-|x_2|^2 $ and if $x_1x_2=x_3$ then, in addition $|x_1|\leq 1)$ ;
 
 \item $|x_2-\bar x_1 x_3|+|x_1x_2-x_3|< 1-|x_1|^2 \;$ (respectively, $\leq 1-|x_1|^2 $ and if $x_1x_2=x_3$ then, in addition $|x_2|\leq 1)$ ;
 
 \end{enumerate}	

\end{thm}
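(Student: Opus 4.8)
\emph{Since none of the results quoted earlier in the excerpt bear on the geometry of $\E$, I would give a self-contained argument.} The plan is to reduce membership in $\E$ to a one-variable problem and then to an elementary inequality. The defining polynomial $p(z,w)=1-x_1z-x_2w+x_3zw$ is affine in each variable separately, so I would first fix $w$ with $|w|\le 1$ and read $p$ as a degree-one polynomial in $z$, namely $p(z,w)=(1-x_2w)-z(x_1-x_3w)$. When $x_1-x_3w\ne 0$ its only zero is $z_0=(1-x_2w)/(x_1-x_3w)$, which lies outside $\ov{\D}$ precisely when $|x_1-x_3w|<|1-x_2w|$ (and when $x_1-x_3w=0$ the polynomial is the constant $1-x_2w$, nonvanishing under the same strict inequality). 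Hence $(x_1,x_2,x_3)\in\E$ if and only if the fractional-linear ``magic'' function $\Psi(w)=\dfrac{x_1-x_3w}{1-x_2w}$ satisfies $|\Psi(w)|<1$ for every $w\in\ov{\D}$; eliminating $w$ instead of $z$ gives the symmetric companion function and accounts for conditions (2) and (3) being interchanged by $x_1\leftrightarrow x_2$, under which both $p$ and $\E$ are invariant. I would note at once that $|\Psi|<1$ on $\ov{\D}$ forces $1-x_2w\ne 0$ there, hence $|x_2|<1$, so that $\Psi$ is analytic on a neighbourhood of $\ov{\D}$ and the maximum modulus principle collapses the whole condition to the boundary circle $|w|=1$.

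Next I would translate $|\Psi|<1$ into the algebraic form of the theorem. Expanding
\[
g(w):=|1-x_2w|^2-|x_1-x_3w|^2=(1-|x_1|^2)+(|x_2|^2-|x_3|^2)|w|^2-2\,\mathrm{Re}\big((x_2-\bar x_1x_3)w\big),
\]
the requirement is $g(w)>0$ on $\ov{\D}$. Minimising over $\arg w$ for each fixed $r=|w|\in[0,1]$ reduces this to positivity on $[0,1]$ of the real quadratic
\[
h(r)=(|x_2|^2-|x_3|^2)r^2-2\,|x_2-\bar x_1x_3|\,r+(1-|x_1|^2).
\]
To reach the clean product form I would substitute $x_3=x_1x_2-e$, where $e:=x_1x_2-x_3$ is the quantity whose modulus is $|x_1x_2-x_3|$ (and, tellingly, the off-diagonal product of any $2\times2$ matrix realising $(x_1,x_2,x_3)=(a_{11},a_{22},\det)$ as in the $\mu$-synthesis picture). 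This gives the identity $x_2-\bar x_1x_3=(1-|x_1|^2)x_2+\bar x_1e$, after which a case split on the sign of the leading coefficient $|x_2|^2-|x_3|^2$ and on whether the vertex of $h$ falls inside $[0,1]$ should convert ``$h>0$ on $[0,1]$'' into the single inequality $|x_2-\bar x_1x_3|+|x_1x_2-x_3|<1-|x_1|^2$ of condition (3); the symmetric substitution yields (2).

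Finally I would treat the closed tetrablock $\ov{\E}$. The implications ``$\Rightarrow$'' follow by continuity, since every point of $\ov{\E}$ is a limit of points of $\E$ and the strict inequalities pass to their non-strict analogues; conversely I would perturb a point satisfying the non-strict inequalities radially into $\E$ to deduce membership in $\ov{\E}$. The delicate bookkeeping is the degenerate locus $e=0$, i.e. $x_1x_2=x_3$: there $x_2-\bar x_1x_3=(1-|x_1|^2)x_2$, so when $|x_1|=1$ the inequality $|x_2-\bar x_1x_3|\le 1-|x_1|^2$ degenerates to $0\le 0$ and holds for every $x_2$, which is exactly why the extra clause $|x_2|\le 1$ must be imposed to exclude spurious boundary points. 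I expect this closure analysis, together with checking that the quadratic condition on $h$ matches the stated product form in every case of the split, to be the main obstacle; the reduction in the first paragraph is routine once the maximum modulus principle is invoked.
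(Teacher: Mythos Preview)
The paper does not prove this theorem at all: it is quoted verbatim from \cite{A:W:Y} (Theorems~2.2 and~2.4 there) and used as background. So there is no ``paper's proof'' to compare against; your self-contained sketch is in fact essentially the argument given in the cited source, which proceeds via the same fractional-linear function $\Psi(w)=(x_1-x_3w)/(1-x_2w)$ and the same reduction to a boundary inequality.

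Your outline is sound, but one step deserves to be made explicit rather than deferred to a ``case split'': the quadratic $h(r)=dr^2-2ar+c$ with $a=|x_2-\bar x_1x_3|$, $b=|x_1x_2-x_3|$, $c=1-|x_1|^2$, $d=|x_2|^2-|x_3|^2$ has discriminant exactly $4b^2$, because of the identity
\[
|x_2-\bar x_1x_3|^2-|x_1x_2-x_3|^2=(1-|x_1|^2)(|x_2|^2-|x_3|^2),
\]
i.e.\ $a^2-b^2=cd$. Hence its roots are $(a\pm b)/d$, and the condition ``$h>0$ on $[0,1]$'' becomes, in every sign case for $d$, precisely $a+b<c$. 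This identity is what replaces the substitution $x_3=x_1x_2-e$ you propose, and it makes the case analysis short and transparent; without it the split you describe is correct but unmotivated. The closure argument and the degenerate case $x_1x_2=x_3$ are handled as you indicate.
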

The distinguished boundary of the tetrablock was determined in \cite{A:W:Y} to be the following set:
\[
b\ov{\E}=\{(x_1,x_2,x_3)\in \ov{\E}\,:\, |x_3|=1 \}.
\]
Operator theory on the tetrablock was introduced in \cite{T:B} and was further developed in \cite{B-P1} \cite{S:P-tetra2}, \cite{S:P-tetra3} and in a few more articles. Following Arveson's terminology, we consider the commuting operator triples for which $\ov{\E}$ is a spectral set.

\begin{defn}
A triple of commuting Hilbert space operators $(A,B,P)$ is called a \textit{tetrablock-contraction} or simply an $\E$-\textit{contraction} if the closed tetrablock $\ov{\E}$ is a spectral set for $(A,B,P)$, that is, the Taylor joint spectrum $\sigma_T(A,B,P) \subseteq \ov{\E}$ and von Neumann inequality
\[
\|f(A,B,P)\|\leq \sup_{z\in\ov{\E}} |f(z)|=\|f\|_{\infty,\; \ov{\E}} 
\]
holds for every complex-valued rational function $f=\dfrac{p}{q}$ with $p,q \in \C[z_1,z_2,z_3]$, such that $q$ does not have any zero inside $\ov{\E}$. Here $f(A,B,P)=p(A,B,P)q(A,B,P)^{-1}$.
\end{defn}

Unitaries, isometries, co-isometries etc. are special classes of contractions. There are natural analogues of these classes for $\E$-contractions in the literature (see \cite{T:B}) which are given below.

\begin{defn}
Let $A,B,P$ be commuting operators on $\mathcal H$. Then $(A,B,P)$
is called
\begin{itemize}
\item[(i)] an $\mathbb E$-\textit{unitary} if $A,B,P$ are normal
operators and the Taylor joint spectrum $\sigma_T (A,B,P)$ is a
subset of $b\mathbb{E}$ ;

\item[(ii)] an $\mathbb E$-isometry if there exists a Hilbert space
$\mathcal K \supseteq \mathcal H$ and an $\mathbb E$-unitary
$(Q_1,Q_2,V)$ on $\mathcal K$ such that $\mathcal H$ is a joint
invariant subspace of $A,B,P$ and that $(Q_1|_{\mathcal H},
Q_2|_{\mathcal H},V|_{\mathcal H})=(A,B,P)$ ;

\item[(iii)] an $\mathbb E$-co-isometry if the adjoint $(A^*,
B^*,P^*)$ is an $\mathbb E$-isometry.
\end{itemize}

\end{defn}
The following theorem provides a set of independent descriptions of the $\E$-unitaries.

\begin{thm}[{\cite{T:B}, Theorem 5.4}]\label{thm:tu}
    Let $\underline N = (N_1, N_2, N_3)$ be a commuting triple of
    bounded operators. Then the following are equivalent.

    \begin{enumerate}

        \item $\underline N$ is an $\mathbb E$-unitary,

        \item $N_3$ is a unitary and $\underline N$ is an $\mathbb
        E$-contraction,

        \item $N_3$ is a unitary, $N_2$ is a contraction and $N_1 = N_2^*
        N_3$.
    \end{enumerate}
\end{thm}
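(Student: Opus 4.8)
The plan is to prove the cyclic chain $(1)\Rightarrow(2)\Rightarrow(3)\Rightarrow(1)$, handling the two outer implications with the spectral theorem for commuting normal operators and reserving the real work for $(2)\Rightarrow(3)$, which I expect to be the main obstacle. That step I would derive from the von Neumann inequality applied to a carefully chosen one-parameter family of rational test functions, followed by two limiting manoeuvres; identifying the right test functions and justifying the limits is the delicate part, whereas the outer implications are comparatively routine once the spectral theorem and Theorem \ref{thm:21} are in hand.

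For $(1)\Rightarrow(2)$: if $\underline N$ is an $\E$-unitary then $N_1,N_2,N_3$ are commuting normal operators with $\sigma_T(\underline N)\subseteq b\ov{\E}\subseteq\{|x_3|=1\}$. As $N_3$ is normal with spectrum in $\T$, it is unitary. Moreover the joint spectral measure of the triple gives a normal functional calculus under which $\|f(\underline N)\|=\sup_{\sigma_T(\underline N)}|f|\le \sup_{\ov{\E}}|f|$ for every admissible rational $f$ (the inequality holding because $\sigma_T(\underline N)\subseteq\ov{\E}$); this is precisely the assertion that $\ov{\E}$ is a spectral set, so $\underline N$ is an $\E$-contraction.

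The heart of the argument is $(2)\Rightarrow(3)$. First I would observe, straight from the defining inequality of $\E$, that for each scalar $w$ with $|w|<1$ the rational functions
\[
\Psi_w(x_1,x_2,x_3)=\frac{x_1-wx_3}{1-wx_2},\qquad \widetilde\Psi_w(x_1,x_2,x_3)=\frac{x_2-wx_3}{1-wx_1}
\]
have no poles on $\ov{\E}$ and map $\ov{\E}$ into $\ov{\D}$; the denominators do not vanish there since $|x_1|,|x_2|\le 1$ on $\ov{\E}$ by Theorem \ref{thm:21}. Feeding $\Psi_w$ into the von Neumann inequality for the $\E$-contraction $\underline N$ gives $\|(N_1-wN_3)(I-wN_2)^{-1}\|\le 1$, equivalently
\[
(N_1-wN_3)^*(N_1-wN_3)\le (I-wN_2)^*(I-wN_2),
\]
and $\widetilde\Psi_w$ gives the same inequality with $N_1$ and $N_2$ interchanged. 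Expanding, using $N_3^*N_3=I$, and integrating each inequality over $w=re^{i\theta}$ in $\theta$ annihilates the linear terms and leaves $N_1^*N_1\le I$, $N_2^*N_2\le I$ and $N_1^*N_1=N_2^*N_2$; in particular $N_2$ is a contraction and $D:=I-N_1^*N_1=I-N_2^*N_2\ge 0$. Substituting this identity back into the first (un-integrated) inequality collapses it to $\pm\,2\,\mathrm{Re}(wS)\le(1-|w|^2)D$, where $S=N_1^*N_3-N_2$; testing against a unit vector and letting $r=|w|\to 1^-$ forces $\langle Sh,h\rangle=0$ for every $h$, whence $S=0$ over the complex Hilbert space, i.e. $N_2=N_1^*N_3$. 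Taking adjoints and using $N_1N_3=N_3N_1$ together with $N_3^*N_3=I$ then yields $N_1=N_2^*N_3$.

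For $(3)\Rightarrow(1)$ I would first deduce normality algebraically: from $N_1=N_2^*N_3$ the relation $N_1N_3=N_3N_1$ shows $N_3$ commutes with $N_2^*$ as well as with $N_2$, and then $N_1N_2=N_2N_1$ forces $N_2^*N_2=N_2N_2^*$, so $N_2$ is normal; a direct computation gives $N_1N_1^*=N_1^*N_1=N_2^*N_2$, so $N_1$ is normal too, while $N_3$ is unitary. Thus $\underline N$ is a commuting normal triple, and its Taylor spectrum coincides with the support of the joint spectral measure, on which the relations read $x_1=\bar x_2x_3$, $|x_3|=1$, $|x_2|\le 1$. Inserting these into the closure characterization of Theorem \ref{thm:21}(2) shows every such point lies in $\ov{\E}$: the two moduli sum to exactly $1-|x_2|^2$, and the degenerate case $x_1x_2=x_3$ arises only when $|x_2|=1$, where indeed $|x_1|=1$. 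Since $|x_3|=1$ throughout, $\sigma_T(\underline N)\subseteq b\ov{\E}$, so $\underline N$ is an $\E$-unitary, closing the cycle.
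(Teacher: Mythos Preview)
The paper does not supply its own proof of this theorem: it is quoted verbatim from \cite{T:B} (Theorem 5.4 there) and used as a black box, so there is no in-paper argument to compare against. Your proof is nonetheless correct and self-contained, and it follows what is essentially the standard route in the tetrablock literature: the implication $(2)\Rightarrow(3)$ via the one-parameter family $\Psi_w(x)=(x_1-wx_3)/(1-wx_2)$ (and its companion with $x_1,x_2$ swapped), then averaging in the phase of $w$ to obtain $N_1^*N_1=N_2^*N_2$, and finally letting $|w|\to 1$ to force $N_1^*N_3-N_2=0$. Your justification that $\Psi_w$ is pole-free and bounded by $1$ on $\overline{\mathbb E}$ is exactly the right one (rewrite the defining nonvanishing condition as $(1-x_2w)-z(x_1-x_3w)\neq 0$ for $|z|\le 1$), and the passage from $\langle Sh,h\rangle=0$ for all $h$ to $S=0$ is legitimate since the ambient Hilbert spaces in this paper are complex. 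The normality computations in $(3)\Rightarrow(1)$ are also fine; once $N_1,N_2,N_3$ are individually normal and pairwise commuting, Fuglede's theorem guarantees the $C^*$-algebra they generate is commutative, so the joint spectral measure exists and your pointwise verification via Theorem \ref{thm:21}(2) places $\sigma_T(\underline N)$ inside $b\overline{\mathbb E}$.
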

In the following result, we have a set of characterizations for the $\E$-isometries.

\begin{thm}[{\cite{T:B}, Theorem 5.7}] \label{thm:ti}

    Let $\underline V = (V_1, V_2, V_3)$ be a commuting triple of
    bounded operators. Then the following are equivalent.

    \begin{enumerate}

        \item $\underline V$ is an $\mathbb E$-isometry.

        \item $V_3$ is an isometry and $\underline V$ is an $\mathbb
        E$-contraction.

        \item $V_3$ is an isometry, $V_2$ is a contraction and $V_1=V_2^*
        V_3$.
        
        \item (Wold decomposition) There is a decomposition of $\HS$ into orthogonal direct sum $\HS =\HS_1 \oplus \HS_2$ such that $\HS_1 , \HS_2$ are common reducing subspaces for $V_1,V_2,V_3$ and that $(V_1|_{\HS_1}, V_2|_{\HS_1},V_3|_{\HS_1})$ is an $\E$-unitary and $(V_1|_{\HS_2},V_2|_{\HS_2},V_3|_{\HS_2})$ is a pure $\E$-isometry.

    \end{enumerate}
\end{thm}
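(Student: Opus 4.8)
The plan is to prove the cycle $(1)\Rightarrow(2)\Rightarrow(3)\Rightarrow(1)$ and then $(3)\Leftrightarrow(4)$, using Theorem~\ref{thm:tu} to recognize $\E$-unitaries and the coordinate bounds $|x_1|\le 1$, $|x_2|\le 1$ on $\ov{\E}$ that fall out of Theorem~\ref{thm:21}. For $(1)\Rightarrow(2)$ I would argue by restriction: if $(V_1,V_2,V_3)$ is the restriction of an $\E$-unitary $(Q_1,Q_2,Q_3)$ to a joint invariant subspace $\HS$, then $Q_3$ is unitary by Theorem~\ref{thm:tu}, so its restriction $V_3$ to the invariant subspace $\HS$ is an isometry; and since $f(V_1,V_2,V_3)=f(Q_1,Q_2,Q_3)|_{\HS}$ for every admissible rational $f$, the von Neumann inequality passes from the $\E$-unitary (which is an $\E$-contraction by Theorem~\ref{thm:tu}) to its restriction. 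Hence $(V_1,V_2,V_3)$ is an $\E$-contraction.

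For $(2)\Rightarrow(3)$, evaluating the von Neumann inequality at the coordinate polynomial $x_2$ gives $\|V_2\|\le\sup_{\ov{\E}}|x_2|\le 1$ by Theorem~\ref{thm:21}, so $V_2$ is a contraction. The relation $V_1=V_2^*V_3$ is the heart of this implication: I would obtain it from the structure theory of $\E$-contractions, namely the fundamental-operator identity $A-B^*P=D_PF_1D_P$ with $D_P=(I-P^*P)^{1/2}$; since $V_3$ is an isometry we have $D_{V_3}=0$, and the identity collapses to $V_1-V_2^*V_3=0$. (Equivalently, one may dilate $(V_1,V_2,V_3)$ to an $\E$-unitary and read off the relation on the invariant copy of $\HS$.) This is the step I expect to be the main obstacle, as it is the only place genuinely requiring input beyond the elementary characterizations recalled in the excerpt.

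For $(3)\Rightarrow(1)$ I would exhibit the $\E$-unitary extension explicitly. Let $V$ be the minimal unitary extension of the isometry $V_3$, acting on $\mathcal K=\ov{\bigcup_{n\ge 0}V^{-n}\HS}$, so that $\HS\subseteq V^{-1}\HS\subseteq\cdots$ and $\HS$ is $V$-invariant. Define $Q_i$ on the dense subspace $\bigcup_n V^{-n}\HS$ by $Q_i\,V^{-n}h=V^{-n}V_ih$ for $h\in\HS$ and $i=1,2$; using $V_iV_3=V_3V_i$ one checks this is well defined and contractive, commutes with $V$ and with $V^*$, and satisfies $Q_1Q_2=Q_2Q_1$ because $V_1V_2=V_2V_1$ on $\HS$. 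The relation $V_1=V_2^*V_3$ together with $V_3^*V_3=I$ then yields $Q_1=Q_2^*V$ on the dense subspace, which I would verify by a direct inner-product computation after choosing representatives with sufficiently large index. By Theorem~\ref{thm:tu}(3), $(Q_1,Q_2,V)$ is an $\E$-unitary extending $(V_1,V_2,V_3)$ on the joint invariant subspace $\HS$, so $(V_1,V_2,V_3)$ is an $\E$-isometry; verifying all the $\E$-unitary conditions for the constructed triple is the delicate part of this step.

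Finally, for $(3)\Leftrightarrow(4)$ I would apply the classical Wold decomposition (Theorem~\ref{Wold:decomposition}) to $V_3$, giving $\HS=\HS_1\oplus\HS_2$ with $\HS_1=\bigcap_n V_3^n\HS$ (unitary part) and $\HS_2$ the pure-shift part. Commutativity $V_iV_3=V_3V_i$ shows that $V_1,V_2$ leave $\HS_1$ invariant, while the relation $V_1=V_2^*V_3$, combined with the invertibility of $V_3$ on $\HS_1$, gives $V_2^*\HS_1\subseteq\HS_1$ and hence $V_1^*\HS_1\subseteq\HS_1$; thus both $\HS_1$ and $\HS_2$ reduce the whole triple. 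On $\HS_1$ the third component is unitary, so the restriction is an $\E$-unitary by Theorem~\ref{thm:tu}; on $\HS_2$ the third component is a pure isometry, and by $(3)\Rightarrow(1)$ the restriction is an $\E$-isometry with trivial unitary part, i.e.\ a pure $\E$-isometry. This is precisely statement~(4). Conversely, an $\E$-unitary and a pure $\E$-isometry each satisfy~(3), and the defining relations of~(3) are stable under orthogonal direct sums, so $(4)\Rightarrow(3)$, closing the equivalence.
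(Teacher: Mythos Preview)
The paper does not supply its own proof of this theorem; it is quoted verbatim from \cite{T:B} (Theorem~5.7 there) as background in Section~\ref{sec:2}, so there is no in-paper argument against which to compare your proposal.

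That said, your outline is a reasonable reconstruction. The step $(2)\Rightarrow(3)$ via the fundamental-operator identity of Theorem~\ref{exist-tetra} with $D_{V_3}=0$ is exactly the right device, and the Wold argument for $(3)\Leftrightarrow(4)$ is the standard one. The one place your sketch is genuinely incomplete is $(3)\Rightarrow(1)$: having defined $Q_2$ on $\mathcal K$ by $Q_2V^{-n}h=V^{-n}V_2h$, the relation $Q_1=Q_2^*V$ does not follow immediately from $V_1=V_2^*V_3$ on $\HS$, because $Q_2^*|_{\HS}$ need not equal $V_2^*$ (only $P_{\HS}Q_2^*|_{\HS}=V_2^*$ is automatic for an extension). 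One either has to show directly that $Q_2$ is normal---which uses both $V_1=V_2^*V_3$ and its adjoint companion $V_2=V_1^*V_3$, the latter not yet derived in your sketch---or run the inner-product computation you allude to with some care. You flag this as ``delicate,'' and it is indeed the substantive gap.
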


It is evident from the theorems above that an $\E$-contraction $(A,B,P)$ is an $\E$-unitary or $\E$-isometry if and only if $P$ is a unitary or isometry respectively. Also canonical decomposition of an $\E$-contraction $(A,B,P)$ acting on a Hilbert space $\HS$ (see Theorem \ref{thm:E-decomp1}) splits $\HS$ into two orthogonal parts $\HS=\HS_1 \oplus \HS_2$ such that $(A|_{\HS_1}, B|_{\HS_1},P|_{\HS_1})$ is an $\E$-unitary and $(A|_{\HS_2},B|_{\HS_2},P|_{\HS_2})$ is an $\E$-contraction with $P|_{\HS_2}$ being a c.n.u. contraction. These facts lead to defining the following natural classes of $\E$-contractions.

\begin{defn}\label{def3}
    Let $(A,B, P)$ be an $\mathbb E$-contraction on a Hilbert space $\mathcal
    H$. We say that $(A,B,P)$ is
    \begin{itemize}
        \item[(i)] a \textit{completely non-unitary} $\E$-\textit{contraction}, or simply a \textit{c.n.u.} $\mathbb E$-\textit{contraction} if $P$ is a
        c.n.u. contraction ;
        \item[(ii)] a \textit{completely non-isometry} $\E$-\textit{contraction}, or simply a \textit{c.n.i.} $\mathbb E$-\textit{contraction} if $P$ is a
        c.n.i. contraction ;
        \item[(iii)] a \textit{pure} $\E$-\textit{contraction} if $P$ is a pure contraction ;
        \item[(iv)] a \textit{pure} $\E$-\textit{isometry} if $P$ is a pure isometry.       
        \end{itemize}

\end{defn}

The next theorem plays central role in the operator theory of the tetrablock. An interested reader is referred to \cite{T:B, B-P1, S:P-tetra1, S:P-tetra2, S:P-tetra3} etc. to witness how this result plays pivotal role in deciphering the structure of an $\E$-contraction and in all kind of Nagy-Foias type operator theory for $\E$-contractions.

\begin{thm}[\cite{T:B}, Theorem 1.3] \label{exist-tetra}
To every $\mathbb
E$-contraction $(A, B, P)$ there were two unique operators $F_1$
and $F_2$ on $\mathcal{\mathcal{D}_P} =\overline{\text{Ran}}(I -
P^*P)$ that satisfied the fundamental equations, i.e,
\[
A-B^*P = D_PF_1D_P\,, \qquad B-A^*P = D_PF_2D_P.
\]
\end{thm}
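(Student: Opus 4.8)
The plan is to reduce the existence of $F_1$ (and, symmetrically, $F_2$) to a single numerical-radius inequality of the form $|\langle (A-B^*P)h,h\rangle|\le\|D_Ph\|^2$, and then to promote such an inequality to a two-sided factorization through $D_P$. The whole input from the hypothesis that $\ov{\E}$ is a spectral set will be the von Neumann inequality applied to two explicit one-parameter families of rational test functions adapted to the defining polynomial of $\E$. Writing $(x_1,x_2,x_3)$ for the coordinates and substituting $(x_1,x_2,x_3)=(A,B,P)$, I would use
\[
\Psi_w(x_1,x_2,x_3)=\df{x_3w-x_1}{x_2w-1},\qquad \Psi'_w(x_1,x_2,x_3)=\df{x_3w-x_2}{x_1w-1},\qquad |w|<1.
\]
Since $1-x_1w-x_2w'+x_3ww'=(1-x_2w')-w(x_1-x_3w')$, the characterization of $\ov{\E}$ in Theorem \ref{thm:21} shows that $\Psi_w$ and $\Psi'_w$ map $\ov{\E}$ into $\ov{\D}$; moreover for $|w|<1$ their denominators do not vanish on $\ov{\E}$ (as $|x_1|,|x_2|\le 1$ forces $|x_2w|<1$ and $|x_1w|<1$), so they are legitimate test functions and von Neumann's inequality yields $\|(Pw-A)(Bw-I)^{-1}\|\le 1$ and $\|(Pw-B)(Aw-I)^{-1}\|\le 1$ (invertibility of $Bw-I$, $Aw-I$ following from $A,B$ being contractions).

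Next I would turn these norm bounds into operator inequalities. From $\|(Pw-A)(Bw-I)^{-1}\|\le1$ we get $(Pw-A)^*(Pw-A)\le(Bw-I)^*(Bw-I)$, and letting $|w|\to 1$ this rearranges to
\[
(I-A^*A)+(B^*B-P^*P)\ \ge\ w(B-A^*P)+\bar w(B-A^*P)^*\qquad(|w|=1).
\]
Optimizing over the phase of $w$ gives $\langle\big[(I-A^*A)+(B^*B-P^*P)\big]h,h\rangle\ge 2|\langle(B-A^*P)h,h\rangle|$, and the same argument with $\Psi'_w$ (which simply interchanges the roles of $A$ and $B$) gives $\langle\big[(I-B^*B)+(A^*A-P^*P)\big]h,h\rangle\ge 2|\langle(A-B^*P)h,h\rangle|$. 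The crucial observation is that the two positive coefficient operators add up to $2(I-P^*P)=2D_P^2$, so adding the two inequalities and discarding one nonnegative term isolates
\[
|\langle(A-B^*P)h,h\rangle|\le\|D_Ph\|^2,\qquad |\langle(B-A^*P)h,h\rangle|\le\|D_Ph\|^2\qquad(h\in\HS).
\]

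Finally I would convert each numerical-radius inequality into the desired factorization $X=D_PF D_P$ with $X=A-B^*P$ (resp. $B-A^*P$). Writing $X=X_1+iX_2$ with $X_1,X_2$ self-adjoint, the bound above gives $-D_P^2\le X_j\le D_P^2$; for a self-adjoint $S$ with $-D_P^2\le S\le D_P^2$ one has $|\langle Sh,g\rangle|\le c\,\|D_Ph\|\,\|D_Pg\|$ (apply Cauchy--Schwarz to the positive forms $\langle(D_P^2\pm S)\cdot,\cdot\rangle$), so the sesquilinear form $[D_Ph,D_Pg]:=\langle Sh,g\rangle$ is well defined and bounded on $\mathrm{Ran}\,D_P$, extends to $\mathcal D_P$, and produces a bounded self-adjoint $G$ on $\mathcal D_P$ with $S=D_PGD_P$. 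Assembling $F_1=G_1+iG_2$ (and likewise $F_2$) gives operators on $\mathcal D_P$ satisfying the fundamental equations. Uniqueness is immediate: if $D_PFD_P=D_PF'D_P$ then $\langle(F-F')D_Ph,D_Pg\rangle=0$ for all $h,g$, and density of $\mathrm{Ran}\,D_P$ in $\mathcal D_P$ forces $F=F'$. I expect the main obstacle to be the middle step---arranging that the leftover terms $\pm(A^*A-B^*B)$ from a single test family cancel so that the right-hand operator is exactly $D_P^2$; this is precisely what forces the use of \emph{both} families $\Psi_w$ and $\Psi'_w$ and their summation, and is the point where the geometry of $\E$ (its symmetry in the first two coordinates) enters decisively.
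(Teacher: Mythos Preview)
The paper does not supply its own proof of this statement; it is quoted from \cite{T:B} as background material. Your argument is correct and is, in outline, the one given there: one feeds the rational test functions $\Psi_w,\Psi'_w$ into the von~Neumann inequality to obtain the two operator-pencil positivities, adds them so that the $\pm(A^*A-B^*B)$ terms cancel and the right-hand side becomes exactly $2D_P^2$, deduces the numerical-radius bounds $|\langle(A-B^*P)h,h\rangle|\le\|D_Ph\|^2$ and its companion, and then invokes the self-adjoint Douglas-type factorization (splitting into real and imaginary parts and using Cauchy--Schwarz for the positive forms $D_P^2\pm X_j$) to produce $F_1,F_2$ on $\mathcal D_P$. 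Uniqueness via density of $\mathrm{Ran}\,D_P$ is the standard argument.

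One small remark on your justification that $\Psi_w$ maps $\overline{\mathbb E}$ into $\overline{\mathbb D}$: this is indeed one of the equivalent characterizations of $\overline{\mathbb E}$ in \cite{A:W:Y}, but the truncated version stated as Theorem~\ref{thm:21} in this paper lists only conditions (2) and (3), from which the contractivity of $\Psi_w$ is not entirely immediate. It is true and well documented in \cite{A:W:Y}, so the step is legitimate; just be aware you are appealing to a slightly fuller form of that theorem than the excerpt reproduced here.
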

The operators $F_1,F_2$ are called the \textit{fundamental
operators} of $(A, B, P)$. Taking the fundamental operators as the key ingredient, the following $\E$-isometric dilation was constructed in \cite{T:B} for a certain class of $\E$-contractions.

\begin{thm}[\cite{T:B}, Theorem 6.1]\label{thm:tetra-dilation}

Let $(A, B, P)$ be a tetrablock contraction on
$\mathcal H$ with fundamental operators $F_1$ and $F_2$ . Let $\mathcal D_P$ be the closure of the range of $D_P$.
Let $\mathcal K = \mathcal H \oplus \mathcal D_P \oplus \mathcal D_P
\oplus \cdots = \mathcal H \oplus l^2(\mathcal D_P) $. Consider the
operators $V_1, V_2$ and $V_3$ defined on $\mathcal{K}$ by
\begin{align*} &
V_1(h_0,h_1,h_2,\dots)=(Ah_0,F_2^* D_P h_0 + F_1 h_1 , F_2^*h_1 + F_1 h_2 , F_2^*h_2 + F_1 h_3,\dots)\\
& V_2(h_0,h_1,h_2,\dots)=(Bh_0 , F_1^* D_P h_0 + F_2 h_1 , F_1^*h_1 + F_2 h_2 , F_1^*h_2 + F_2 h_3,\dots)\\
& V_3(h_0,h_1,h_2,\dots)=(Ph_0, D_P h_0,h_1,h_2,\dots).
\end{align*}
Then \begin{enumerate}
\item $\Vbar = (V_1,V_2,V_3)$ is a minimal tetrablock isometric
    dilation of $(A, B, P)$ if $[F_1 , F_2] = 0$ and $[F_1 , F_1^* ] = [F_2 , F_2^* ]$.
\item If there is a tetrablock isometric dilation $\Wbar =
    (W_1,W_2,W_3)$ of $(A, B, P)$ such that $W_3$ is the minimal isometric dilation of $P$,
    then $\Wbar$ is unitarily equivalent to $\Vbar$. Moreover, $[F_1, F_2] = 0$ and $[F_1 , F_1^* ]
    = [F_2 , F_2^* ]$.
\end{enumerate}

\end{thm}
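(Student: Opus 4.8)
The plan is to organize the proof around the single observation that $V_3$ is precisely the Sz.-Nagy--Foias minimal isometric dilation of the contraction $P$ on $\mathcal K = \mathcal H \oplus \ell^2(\mathcal D_P)$. Consequently $V_3$ is an isometry, $\mathcal H$ is invariant under $V_3^*$ with $V_3^*|_{\mathcal H} = P^*$, and $\mathcal K = \overline{\operatorname{span}}\{V_3^n h : n \ge 0,\ h \in \mathcal H\}$. Computing the adjoints $V_1^*$ and $V_2^*$ coordinatewise, one sees at once that $\mathcal H$ is also invariant under $V_1^*$ and $V_2^*$ and that $V_1^*|_{\mathcal H} = A^*$, $V_2^*|_{\mathcal H} = B^*$. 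Thus, once we know that $\underline{V} = (V_1, V_2, V_3)$ is a commuting $\E$-isometry, the co-invariance of $\mathcal H$ gives the dilation identity $P_{\mathcal H} V_1^{m} V_2^{n} V_3^{k}|_{\mathcal H} = A^m B^n P^k$, and minimality follows because $V_3$ by itself already generates $\mathcal K$ from $\mathcal H$. So the entire content of part (1) is that $\underline{V}$ is a commuting $\E$-isometry.

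Before the main verification I would record the two intertwining identities $D_P A = F_1 D_P + F_2^* D_P P$ and $D_P B = F_2 D_P + F_1^* D_P P$, which come from the fundamental equations of Theorem \ref{exist-tetra} alone: substituting $B^* = P^* A + D_P F_2^* D_P$ (the adjoint of $B - A^*P = D_P F_2 D_P$) into $A = B^*P + D_P F_1 D_P$ and using $AP = PA$ gives $D_P^2 A = D_P(F_1 D_P + F_2^* D_P P)$; since $D_P A - F_1 D_P - F_2^* D_P P$ has range in $\overline{\operatorname{Ran}} D_P = (\ker D_P)^\perp$ while being annihilated by $D_P$, it vanishes, and the second identity is symmetric. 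A direct coordinatewise expansion of $V_2^* V_3$, in which the first $\mathcal H$-entry collapses by the first fundamental equation $A - B^*P = D_P F_1 D_P$, then yields $V_1 = V_2^* V_3$ with no extra hypotheses. Since $V_3$ is an isometry, Theorem \ref{thm:ti}(3) now reduces the $\E$-isometry claim to two things: that $\underline{V}$ commutes, and that $V_2$ is a contraction.

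For commutativity, the relations $V_1 V_3 = V_3 V_1$ and $V_2 V_3 = V_3 V_2$ are nothing but the two intertwining identities (all coordinates match trivially, using $AP = PA$ for the $\mathcal H$-entry, except the first $\mathcal D_P$-entry, which is exactly the identity), while $V_1 V_2 = V_2 V_1$ is where the hypotheses enter. Matching coordinates in $V_1 V_2 - V_2 V_1$ produces, at the level of the $\mathcal D_P$-entries, the requirements $F_1 F_2 = F_2 F_1$ and $F_2^* F_2 + F_1 F_1^* = F_1^* F_1 + F_2 F_2^*$ (the $h_0$-coupling terms reduce to these same two identities after one further use of the intertwining relations); these are precisely $[F_1, F_2] = 0$ and $[F_1, F_1^*] = [F_2, F_2^*]$. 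I expect the contractivity of $V_2$ to be the \emph{main obstacle}. Writing $V_2$ as a banded block operator and forming $I - V_2^* V_2$, one is left with a block-tridiagonal form whose repeating diagonal block is $I - F_2^* F_2 - F_1 F_1^*$ and whose off-diagonal blocks are $-F_1 F_2$ and $-(F_1 F_2)^*$, with a modified top-left corner $I - B^* B - D_P F_1 F_1^* D_P$. Its positivity amounts to the positivity on $\T$ of the operator symbol
\[
\Phi(z) = I - F_2^* F_2 - F_1 F_1^* - (F_1 F_2)\,z - (F_1 F_2)^*\,\bar z ,
\]
together with control of the corner via $\|B\| \le 1$ and the intertwining identities; this is exactly where the hypotheses $[F_1, F_2] = 0$ and $[F_1, F_1^*] = [F_2, F_2^*]$ are indispensable, and I would treat this positivity/spectral-factorization step as the technical heart. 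Once $V_2$ is a contraction, Theorem \ref{thm:ti}(3) completes part (1).

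For part (2), I would begin with the uniqueness of the minimal isometric dilation of $P$: if $W_3$ is such a dilation then $W_3 \cong V_3$, so after this identification $\mathcal K$ and $W_3$ may be taken as in the construction. As $W_1$ and $W_2$ commute with the shift $W_3$ and $\mathcal H$ is co-invariant with compressions $A$ and $B$, the commutant structure of $W_3$ forces $W_1, W_2$ to be banded Toeplitz operators whose symbols are determined by $A, B, P$, hence by the fundamental operators; matching against $W_1 = W_2^* W_3$ pins them down to be exactly $V_1, V_2$, giving $\underline{W} \cong \underline{V}$. Finally, the mere internal consistency of these formulas -- that the resulting $W_1, W_2$ genuinely commute and form an $\E$-isometry -- reproduces the coordinate identities of the commutativity step, so the existence of such a dilation forces $[F_1, F_2] = 0$ and $[F_1, F_1^*] = [F_2, F_2^*]$.
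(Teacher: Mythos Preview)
The paper does not prove this theorem at all: it is quoted verbatim from \cite{T:B} (Theorem 6.1 there) as a background result in Section~\ref{sec:2}, with no argument supplied. So there is no ``paper's own proof'' to compare against; everything you wrote is additional to what the present paper contains.

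That said, your outline is essentially the strategy of the original source \cite{T:B}: recognize $V_3$ as the Sch\"affer/Sz.-Nagy minimal isometric dilation of $P$, establish $V_1=V_2^*V_3$ and commutativity coordinatewise (the latter being exactly where $[F_1,F_2]=0$ and $[F_1,F_1^*]=[F_2,F_2^*]$ enter), and invoke the characterization of $\E$-isometries (Theorem~\ref{thm:ti}(3)). One point worth sharpening: the contractivity of $V_2$, which you correctly flag as the crux, is not obtainable from the two commutator hypotheses alone; it uses in addition the numerical-radius inequality $\omega(F_1+zF_2)\le 1$ for $|z|=1$, which is an automatic property of the fundamental operators of an $\E$-contraction (and which the present paper records separately in Theorem~\ref{thm:E-contraction}). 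Your symbol $\Phi(z)$ is on the right track, but its positivity is precisely equivalent to that numerical-radius bound, so you should cite it rather than hope the commutator identities manufacture it. For part~(2) your commutant/Toeplitz argument is again the approach of \cite{T:B}; the identification of $W_1,W_2$ with $V_1,V_2$ goes through the fundamental equations and the uniqueness of $F_1,F_2$ on $\mathcal D_P$.
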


\vspace{0.1cm}

\section{Canonical decomposition of commuting contractions} \label{sec:3}

\vspace{0.4cm}

\noindent  In this Section, we present a canonical decomposition for any finite number of commuting contractions by an application of the canonical decomposition of an $\E$-contraction, which is given below. We present a few preparatory results before going to the main result, viz. Theorem \ref{thm:decomp1}. We begin with a lemma whose proof is a routine exercise.

\begin{lem}\label{lem:23}

If $X\subseteq \mathbb C^n$ is a polynomially convex set, then $X$
is a spectral set for a commuting tuple $(T_1,\dots,T_n)$ if and
only if

\begin{equation}\label{pT}
\|f(T_1,\dots,T_n)\|\leq \| f \|_{\infty,\, X}\,
\end{equation}
for all holomorphic polynomials $f$ in $n$-variables.

\end{lem}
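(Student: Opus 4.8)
The plan is to prove Lemma \ref{lem:23} by reducing the general rational-function von Neumann inequality (which defines a spectral set) to the polynomial one, exploiting polynomial convexity. One direction is trivial: if $X$ is a spectral set for $(T_1,\dots,T_n)$, then inequality \eqref{pT} holds for every holomorphic polynomial $f$, since polynomials are a special case of the rational functions $f=p/q$ (take $q\equiv 1$) allowed in the definition of a spectral set. So the content of the lemma lies entirely in the converse.

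For the converse, I would assume \eqref{pT} holds for all polynomials and aim to upgrade it to all rational functions $f=p/q$ with $q$ nonvanishing on $X$ (equivalently, on $\ov{X}$, the relevant compact set). The key idea is that on a polynomially convex compact set, such rational functions can be uniformly approximated by polynomials. First I would note that since $q$ has no zeros on the compact set $X$, the function $1/q$ is holomorphic in a neighbourhood of $X$, hence $f=p/q$ is holomorphic in a neighbourhood of $X$. By the Oka--Weil theorem, any function holomorphic in a neighbourhood of a polynomially convex compact set can be approximated uniformly on that set by holomorphic polynomials. Thus there is a sequence of polynomials $p_k \to f$ uniformly on $X$, so that $\|p_k\|_{\infty,\,X} \to \|f\|_{\infty,\,X}$.

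The remaining step is to pass this approximation through to the operator norm. I would want $p_k(T_1,\dots,T_n) \to f(T_1,\dots,T_n)$ in operator norm, so that combining with $\|p_k(T_1,\dots,T_n)\| \le \|p_k\|_{\infty,\,X}$ and taking limits yields $\|f(T_1,\dots,T_n)\| \le \|f\|_{\infty,\,X}$. To secure operator-norm convergence of $p_k(T)$ to $f(T)$, one uses that a Cauchy sequence in the sup-norm on $X$ produces, via \eqref{pT} applied to the differences $p_k-p_j$, a Cauchy sequence $\{p_k(T_1,\dots,T_n)\}$ in operator norm; its limit defines $f(T_1,\dots,T_n)$ consistently, and one checks this agrees with $p(T)q(T)^{-1}$. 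The main obstacle — and the only genuinely substantive point — is invoking Oka--Weil correctly and confirming that the polynomial convexity hypothesis is exactly what licenses the uniform polynomial approximation of $f$ on $X$; everything else is a standard limiting argument. I would therefore foreground the Oka--Weil approximation as the crux and keep the norm-limit bookkeeping brief.
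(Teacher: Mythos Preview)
The paper does not actually supply a proof of this lemma; it is introduced with the phrase ``whose proof is a routine exercise'' and no argument is given. Your approach via the Oka--Weil theorem is the standard one and is correct in outline.

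One point you should make explicit: the paper's working definition of ``$X$ is a spectral set for $(T_1,\dots,T_n)$'' has \emph{two} ingredients, namely (i) $\sigma_T(T_1,\dots,T_n)\subseteq X$ and (ii) the von Neumann inequality for rational functions with poles off $X$. Your proposal addresses only (ii). This is not a real gap, because (i) follows from the polynomial inequality and polynomial convexity: for any $z\in\sigma_T(T_1,\dots,T_n)$ and any polynomial $p$, the spectral mapping theorem for the Taylor spectrum gives $|p(z)|\le\|p(T_1,\dots,T_n)\|\le\|p\|_{\infty,X}$, so $z$ lies in the polynomial hull of $X$, which equals $X$. Once $\sigma_T(T_1,\dots,T_n)\subseteq X$ is in hand, $q(T_1,\dots,T_n)$ is genuinely invertible whenever $q$ is zero-free on $X$, and your Oka--Weil limit can be identified with $p(T)q(T)^{-1}$ as you claim. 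With that sentence added, the argument is complete.
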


\begin{lem}\label{lem:triangular1}
If $P,Q$ are commuting contractions, then $(P,Q,PQ)$ is an $\E$-contraction.
\end{lem}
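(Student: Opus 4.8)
The plan is to reduce the assertion to the polynomial von Neumann inequality and then to obtain that inequality by dilating the pair $(P,Q)$ to commuting unitaries. Observe first that $P$, $Q$ and $PQ$ mutually commute because $P$ and $Q$ do, so $(P,Q,PQ)$ is a genuine commuting triple and the calculus $f(P,Q,PQ)$ is meaningful. Since $\ov{\E}$ is polynomially convex, Lemma \ref{lem:23} reduces the claim that $\ov{\E}$ is a spectral set for $(P,Q,PQ)$ to verifying
\[
\|f(P,Q,PQ)\| \leq \|f\|_{\infty,\,\ov{\E}}
\]
for every holomorphic polynomial $f$ in three variables; in particular this dispenses with checking the Taylor-spectrum condition separately, as it is already subsumed in the equivalence of Lemma \ref{lem:23}.

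By Ando's theorem the commuting contractions $P,Q$ admit a commuting unitary dilation $(U,V)$ on some Hilbert space $\mathcal K \supseteq \HS$, so that $g(P,Q)$ is the compression to $\HS$ of $g(U,V)$ for every two-variable polynomial $g$. Applying this with $g(x_1,x_2)=f(x_1,x_2,x_1 x_2)$, which satisfies $g(P,Q)=f(P,Q,PQ)$ and $g(U,V)=f(U,V,UV)$, yields
\[
\|f(P,Q,PQ)\| \leq \|f(U,V,UV)\|.
\]
It remains to control the right-hand side, and here I would exploit the structure of $(U,V,UV)$. Since $UV$ is unitary, $V$ is a contraction, and the commutativity of $U,V$ gives $V^*U=UV^*$ and hence $V^*(UV)=UV^*V=U$, the characterization in Theorem \ref{thm:tu} shows that $(U,V,UV)$ is an $\E$-unitary; in particular it is an $\E$-contraction, so $\|f(U,V,UV)\| \leq \|f\|_{\infty,\,\ov{\E}}$. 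Equivalently, $U,V,UV$ are commuting normal operators with $\sigma_T(U,V,UV)\subseteq b\ov{\E}\subseteq\ov{\E}$, so the spectral theorem for commuting normals gives $\|f(U,V,UV)\|=\sup_{\sigma_T(U,V,UV)}|f|\leq\|f\|_{\infty,\,\ov{\E}}$. Combining the two displayed inequalities produces the desired polynomial von Neumann inequality, and Lemma \ref{lem:23} completes the argument.

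I do not anticipate any serious obstacle in this approach; its entire content is carried by Ando's dilation theorem together with the clean description of $\E$-unitaries in Theorem \ref{thm:tu}. The only small point that requires care is the algebraic identity $U=V^*(UV)$, which rests on the fact that commuting unitaries satisfy $V^*U=UV^*$ (obtained by multiplying $UV=VU$ on the left and right by $V^*$ and using $V^*V=VV^*=I$). One could in principle bypass Ando's theorem and attempt a direct estimate built from the defining inequality $1-x_1z-x_2w+x_3zw\neq 0$ of $\ov{\E}$, but the dilation route is by far the most economical.
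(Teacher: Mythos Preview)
Your proof is correct and follows essentially the same route as the paper's: both reduce via Lemma~\ref{lem:23} to the polynomial von Neumann inequality and both invoke Ando's theorem as the main external input. The only cosmetic difference is that the paper applies Ando's inequality directly together with the scalar inclusion $\{(x_1,x_2,x_1x_2):x_1,x_2\in\ov{\D}\}\subseteq\ov{\E}$ coming from Theorem~\ref{thm:21}, while you pass through Ando's dilation and then use Theorem~\ref{thm:tu} to recognise $(U,V,UV)$ as an $\E$-unitary.
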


\begin{proof}
It follows from part-$(2)$ or part-$(3)$ of Theorem \ref{thm:21} that for any points $x_1,x_2 \in \ov{\D}$, the point $(x_1,x_2,x_1x_2)\in \ov{\E}$. So the map $\phi(x_1,x_2)=(x_1,x_2,x_1x_2)$ embeds $\ov{\D^2}$ analytically into $\ov{\E}$. We have by Ando's inequality that $\ov{\D^2}$ is a spectral set for $(P,Q)$, that is,
\[
\|f(P,Q)\|\leq \|f\|_{\infty, \, \ov{\D}}\,, \quad \; \text{for any } f\in \C[z_1,z_2].
\]
Thus, for any polynomial $g\in \C[z_1,z_2,z_3]$, we have that
\[
\| g(P, Q,PQ) \| =\|  g\circ \phi (P,Q)  \|  \leq \| g\circ \phi \|_{\infty, \ov{\D^2}} = \| g\|_{\infty \,, \phi(\ov{\D^2})}\leq \| g\|_{\infty\,, \ov{\E}}.
\]
Since $\ov{\E}$ is polynomially convex it follows from Lemma \ref{lem:23} that $(P,Q,PQ)$ is an $\E$-contraction.

\end{proof}

\begin{lem}\label{lem:E-1}
Let $A,B$ be commuting contractions on a Hilbert space $\HS$. Then $AB$ is unitary if and only if $A$ and $B$ are unitaries.
\end{lem}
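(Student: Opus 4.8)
The plan is to prove the nontrivial implication, since the converse is immediate: the product of two unitaries is unitary. For the forward direction, the decisive point is that commutativity allows us to exploit the single operator $AB=BA$ from \emph{both} sides. This hypothesis is genuinely needed: on $\ell^2$ the backward shift $A=S^*$ and the forward shift $B=S$ satisfy $AB=S^*S=I$, which is unitary, yet neither $A$ nor $B$ is unitary (here $A,B$ do not commute, since $BA=SS^*\neq I$). So the argument must use commutativity at a precise spot.

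First I would record that $AB$ unitary means $AB$ is simultaneously an isometry and a co-isometry. From the isometry relation $\|ABh\|=\|h\|$ together with the contractivity chain $\|ABh\|\leq\|Bh\|\leq\|h\|$ (using $\|A\|\leq1$ and $\|B\|\leq1$), I would force equality throughout, which yields $\|Bh\|=\|h\|$ for every $h\in\HS$, i.e. $B$ is an isometry. Dually, since $AB$ is a co-isometry, $\|(AB)^*h\|=\|B^*A^*h\|=\|h\|$, and the chain $\|B^*A^*h\|\leq\|A^*h\|\leq\|h\|$ forces $\|A^*h\|=\|h\|$, so $A$ is a co-isometry.

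Next, because $A$ and $B$ commute, $BA=AB$ is the very same unitary operator, and I would run the identical argument with the roles of $A$ and $B$ interchanged: from $\|BAh\|=\|h\|$ and $\|BAh\|\leq\|Ah\|\leq\|h\|$ I obtain that $A$ is an isometry, and from $BA$ being a co-isometry I obtain that $B$ is a co-isometry. Combining the two passes, $A$ is both an isometry and a co-isometry, hence $A^*A=AA^*=I$ and $A$ is unitary; likewise $B$ is unitary, which completes the proof. The only real obstacle is conceptual rather than computational, namely recognizing that commutativity is exactly what makes $BA$ available as the same unitary operator; once that is observed, the two symmetric norm-squeezing arguments are entirely routine.
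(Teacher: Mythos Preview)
Your proof is correct. The norm-squeezing argument is sound: from $\|ABh\|=\|h\|$ and the contractivity inequalities you get that $B$ is an isometry and $A$ is a co-isometry; then, using commutativity to write $BA=AB$, the symmetric pass gives that $A$ is an isometry and $B$ is a co-isometry, so both are unitary. Your remark that commutativity is essential, with the shift example, is apt.

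The paper takes a genuinely different route. It first invokes Lemma~\ref{lem:triangular1} (which relies on Ando's inequality) to show that $(A,B,AB)$ is an $\mathbb{E}$-contraction, then uses the characterization of $\mathbb{E}$-unitaries (Theorem~\ref{thm:tu}) to conclude that, since $AB$ is unitary, the triple $(A,B,AB)$ is an $\mathbb{E}$-unitary and hence $A,B$ are commuting normal operators; finally, the spectral mapping theorem places the Taylor joint spectrum $\sigma_T(A,B)$ in $\mathbb{T}^2$, so $A$ and $B$ are unitaries. Your argument is considerably more elementary and self-contained, avoiding Ando's theorem, joint spectra, and the tetrablock machinery altogether. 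The paper's approach, on the other hand, is deliberately aligned with its overarching theme of deriving classical contraction facts from the operator theory of the tetrablock, and yields normality of $A,B$ as an intermediate structural conclusion.
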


\begin{proof}

If $A,B$ are unitaries, then evidently $AB$ is a unitary. For the converse suppose $AB$ is a unitary. By Spectral Mapping Theorem
\[
\sigma(AB)=\{\lambda_1\lambda_2\,:\, (\lambda_1,\lambda_2)\in \sigma_T(A,B)\}.
\]
Therefore, for any $(a,b)\in \sigma_T(A,B)$, we have that $ab\in\sigma(AB)$ and thus $|ab|=1$. Again since $(A,B)$ is a pair of commuting contractions, it follows that $|a|, |b|\leq 1$. Thus, $|a|=|b|=1$, that is, $(a,b)\in\mathbb T^2$. Again $(A,B,AB)$ is an $\mathbb E$-contraction by Lemma \ref{lem:triangular1} and $AB$ is unitary. Therefore, by Theorem \ref{thm:tu}, $(A,B,AB)$ is an $\mathbb E$-unitary. Thus, by definition $(A,B,P)$ is a triple of commuting normal operators. So, $(A,B)$ is a pair of commuting normal operators whose Taylor joint spectrum lies in $\T^2$. Therefore, $A,B$ are unitaries and the proof is complete.

\end{proof}

\begin{cor}
For commuting contractions $P_1,\dots ,P_n$, $\prod_{i=1}^n P_i$ is unitary if and only if $P_1,\dots ,P_n$ are unitaries.
\end{cor}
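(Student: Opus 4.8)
The plan is to prove the corollary by induction on $n$, using Lemma \ref{lem:E-1} as the base case. The statement is that for commuting contractions $P_1, \dots, P_n$, the product $\prod_{i=1}^n P_i$ is unitary if and only if each $P_i$ is a unitary. The ``if'' direction is immediate: a product of commuting unitaries is clearly a unitary, so all the content is in the ``only if'' direction.

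For the induction, the base case $n=2$ is exactly Lemma \ref{lem:E-1}. For the inductive step, I would suppose the result holds for any collection of $n-1$ commuting contractions and assume that $P = \prod_{i=1}^n P_i$ is unitary. The natural move is to group the product as $A = P_1$ and $B = \prod_{i=2}^n P_i$, so that $P = AB$ with $A$ and $B$ commuting contractions (note $B$ is a contraction as a product of commuting contractions, and $A,B$ commute since all the $P_i$ commute). Applying Lemma \ref{lem:E-1} to the pair $(A,B)$ gives that both $A = P_1$ and $B = \prod_{i=2}^n P_i$ are unitaries. Now $\prod_{i=2}^n P_i$ is a unitary built from the $n-1$ commuting contractions $P_2, \dots, P_n$, so the inductive hypothesis yields that $P_2, \dots, P_n$ are each unitaries. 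Combined with $P_1$ being unitary, this completes the step.

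The only point that needs a moment of care is checking the hypotheses of Lemma \ref{lem:E-1} are genuinely met at each stage, namely that the two factors $A$ and $B$ are commuting contractions. Commutativity is inherited from the commutativity of the full family $\{P_1, \dots, P_n\}$, and the contractivity of $B = \prod_{i=2}^n P_i$ follows because any product of contractions is again a contraction. I do not anticipate a genuine obstacle here; the argument is a clean induction, and Lemma \ref{lem:E-1} does all the heavy lifting through its appeal to the $\E$-unitary characterization in Theorem \ref{thm:tu}. If one preferred to avoid the induction entirely, an alternative would be to mimic the spectral-mapping argument of Lemma \ref{lem:E-1} directly for $n$ factors, using the Spectral Mapping Theorem on $\prod_{i=1}^n P_i$ to force the joint spectrum into $\T^n$ and then invoking $\E$-unitarity; but the inductive route is shorter and reuses the already-established lemma, so that is the approach I would take.
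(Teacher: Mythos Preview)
Your induction argument is correct and is essentially the approach the paper intends: the corollary is stated immediately after Lemma \ref{lem:E-1} without proof, as it follows from that lemma by precisely the grouping-and-induction you describe.
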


\begin{cor}
For an $\E$-unitary $(A,B,P)$, if $P=AB$ then $A,B$ are unitaries.
\end{cor}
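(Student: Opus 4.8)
The plan is to reduce the statement immediately to Lemma \ref{lem:E-1}, which already records exactly when the product of two commuting contractions is unitary. All I really need to check is that the triple $(A,B,P)$ supplies the data that lemma consumes: that $A$ and $B$ are commuting contractions and that their product is unitary.

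First I would confirm that $A$ and $B$ are commuting contractions. Commutativity is built into the hypothesis, since an $\E$-unitary is in particular an $\E$-contraction and hence a commuting triple. For the norm bounds I would apply the von Neumann inequality (available because $\ov{\E}$ is a spectral set for the triple) to the coordinate functions $z_1$ and $z_2$, using that every point $(x_1,x_2,x_3)\in\ov{\E}$ satisfies $|x_1|\le 1$ and $|x_2|\le 1$; these bounds follow from parts $(2)$ and $(3)$ of Theorem \ref{thm:21}. This gives $\|A\|\le 1$ and $\|B\|\le 1$. Alternatively, I could lean on Theorem \ref{thm:tu}, which presents an $\E$-unitary as $P$ unitary, $B$ a contraction, and $A=B^{*}P$; then $\|A\|=\|B^{*}P\|\le\|B\|\le 1$ gives the same conclusion.

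Next I would note that an $\E$-unitary always has $P$ unitary, again by Theorem \ref{thm:tu}. Combining this with the added hypothesis $P=AB$ shows that $AB$ is unitary. With $A,B$ commuting contractions whose product is unitary, Lemma \ref{lem:E-1} applies verbatim and forces both $A$ and $B$ to be unitaries, which is the desired conclusion.

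I do not expect any genuine obstacle here: the entire substance is carried by Lemma \ref{lem:E-1}, and the corollary is just the remark that an $\E$-unitary with $P=AB$ feeds that lemma precisely the right inputs. The only point needing a line of care is the verification that $A$ and $B$ are contractions, and for that I would prefer the explicit structural identity $A=B^{*}P$ from Theorem \ref{thm:tu}, as it avoids invoking the coordinate-wise von Neumann inequality and makes the contraction bound transparent.
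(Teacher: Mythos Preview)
Your proposal is correct and matches the paper's intended argument: the paper records this as an unproved corollary placed immediately after Lemma \ref{lem:E-1}, so the implicit proof is exactly to verify that $A,B$ are commuting contractions with unitary product $P=AB$ and then invoke that lemma. Your remark that the contraction bounds on $A,B$ can be read off either from the von Neumann inequality over $\ov{\E}$ or from the structural identity in Theorem \ref{thm:tu} is also in line with the paper's toolkit.
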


\subsection{Canonical decomposition of an $\E$-contraction}

\noindent It was proved in \cite{S:P-tetra3} by the author of this article that every $\E$-contraction admits a canonical decomposition which is analogous to the canonical decomposition of a contraction (Theorem \ref{thm:CD}). Indeed, in Theorem 3.1 of \cite{S:P-tetra3} it was shown by using the positivity of certain operator pencils that an $\E$-contraction can be orthogonally decomposed into two parts of which one is an $\E$-unitary and the other is a c.n.u $\E$-contraction. Applying similar techniques as in the proof of Theorem 5.1 of \cite{B-P1}, we present below an independent and shorter proof of the same result using the existence of the fundamental operators of an $\E$-contraction.

\begin{thm}\label{thm:E-decomp1}
Let $(A,B,P$ be an $\E$-contraction acting on a
Hilbert space $\mathcal H$. Let $\mathcal H_1$ be the maximal
subspace of $\mathcal H$ which reduces $P$ and on which $P$ is
unitary. Let $\mathcal H_2=\mathcal H\ominus \mathcal H_1$. Then
\begin{enumerate}
\item $\mathcal H_1,\mathcal H_2$ reduce $A,B$,

\item $(A|_{\mathcal H_1},B|_{\mathcal H_1},P|_{\mathcal H_1})$ is an $\E$-unitary,

\item $(A|_{\mathcal H_2},B|_{\mathcal H_2},P|_{\mathcal H_2})$ is a completely non-unitary $\E$-contraction.
\end{enumerate}
The subspace $\mathcal H_1$ or $\mathcal H_2$ may be the
trivial subspace $\{0\}$.
\end{thm}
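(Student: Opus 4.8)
The plan is to take $\mathcal H_1$ as the maximal reducing subspace for $P$ on which $P$ is unitary; this is exactly the unitary part of the canonical decomposition of the single contraction $P$ supplied by Theorem \ref{thm:CD}, so $P|_{\mathcal H_1}$ is unitary and $P|_{\mathcal H_2}$ is a c.n.u.\ contraction by construction. The entire content to be proved is then statement $(1)$, namely that $\mathcal H_1$ (equivalently $\mathcal H_2$) reduces $A$ and $B$; once that is established, statements $(2)$ and $(3)$ follow immediately. Indeed, if $\mathcal H_1$ reduces the triple, then $(A|_{\mathcal H_1},B|_{\mathcal H_1},P|_{\mathcal H_1})$ is an $\E$-contraction whose last component $P|_{\mathcal H_1}$ is unitary, so it is an $\E$-unitary by part $(2)$ of Theorem \ref{thm:tu}; and $(A|_{\mathcal H_2},B|_{\mathcal H_2},P|_{\mathcal H_2})$ is an $\E$-contraction whose last component is c.n.u., which is precisely a c.n.u.\ $\E$-contraction by Definition \ref{def3}(i).

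The mechanism for showing that $\mathcal H_1$ reduces $A$ and $B$ is the fundamental equations of Theorem \ref{exist-tetra}. Writing $F_1,F_2$ for the fundamental operators of $(A,B,P)$ on $\mathcal D_P=\overline{\mathrm{Ran}}\,D_P$, we have
\[
A-B^*P=D_PF_1D_P,\qquad B-A^*P=D_PF_2D_P.
\]
On $\mathcal H_1$ the operator $P$ is unitary, so $D_P|_{\mathcal H_1}=(I-P^*P)^{1/2}|_{\mathcal H_1}=0$; this is the key simplification. From the first fundamental equation, for any $h\in\mathcal H_1$ we get $(A-B^*P)h = D_PF_1D_Ph = 0$, so $Ah=B^*Ph$. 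Since $P$ is unitary on $\mathcal H_1$ it is also co-isometric there, and one checks similarly that $D_{P^*}|_{\mathcal H_1}=0$. The plan is to exploit both the equations and their adjoint forms, together with the commutativity relations, to show that $A\mathcal H_1\subseteq\mathcal H_1$ and $A^*\mathcal H_1\subseteq\mathcal H_1$, and likewise for $B$.

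More carefully, I would argue as follows. Since $P$ is a unitary on $\mathcal H_1$, the identity $Ah=B^*Ph$ holds for $h\in\mathcal H_1$; I want to upgrade this to invariance. Because $\mathcal H_1$ reduces $P$ and $P|_{\mathcal H_1}$ is unitary, for $h\in\mathcal H_1$ we have $\|A^nh\|$ controlled through the relation $A=B^*P$ on $\mathcal H_1$, but the cleanest route is to observe that the vectors in $\mathcal H_1$ are characterized by $\|P^nh\|=\|h\|=\|P^{*n}h\|$ and to use the $\E$-contraction inequalities (Theorem \ref{thm:21}) applied to the triple to force $A$ and $A^*$ to preserve this characterizing condition. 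Concretely, from $A=B^*P$ and $B=A^*P$ on $\mathcal H_1$ one derives $A=B^*P=(A^*P)^*P$, and iterating these together with the norm-preservation of $P$ pins $A\mathcal H_1$ and $A^*\mathcal H_1$ inside $\mathcal H_1$.

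The main obstacle I anticipate is precisely this last step: deducing genuine two-sided invariance of $\mathcal H_1$ under $A$ and $B$ from the pointwise identities $Ah=B^*Ph$ and $Bh=A^*Ph$ on $\mathcal H_1$. The pointwise identities give invariance-type information only after one controls where $Ah$ and $A^*h$ land, and the subtlety is that $B^*$ and $A^*$ are a priori not known to preserve $\mathcal H_1$, so there is a potential circularity between proving $\mathcal H_1$ is $A$-reducing and $B$-reducing. I expect to break this by working with the orthogonal complement $\mathcal H_2$ simultaneously and using that $\mathcal H_1$ is the \emph{maximal} $P$-unitary reducing subspace: any vector produced by applying $A$ or $B$ to $\mathcal H_1$ that fails to lie in $\mathcal H_1$ would, via the fundamental equations and the unitarity of $P$ there, generate a strictly larger $P$-unitary reducing subspace, contradicting maximality. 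This maximality argument, rather than a bare-hands computation, should be the crux that closes the invariance and hence the whole theorem.
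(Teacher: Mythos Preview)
Your overall plan coincides with the paper's: use the fundamental equations (Theorem~\ref{exist-tetra}), exploit that $D_P$ vanishes on $\mathcal H_1$, and then invoke the c.n.u.\ nature of $P|_{\mathcal H_2}$ (equivalently, maximality of $\mathcal H_1$) to kill the off-diagonal pieces of $A$ and $B$. Your identification of parts~(2) and~(3) as immediate consequences of~(1) via Theorem~\ref{thm:tu} and Definition~\ref{def3} is also exactly right.

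The gap is in the execution of the step you yourself flag as the obstacle. Writing $A=\begin{bmatrix}A_{11}&A_{12}\\A_{21}&A_{22}\end{bmatrix}$, $B=\begin{bmatrix}B_{11}&B_{12}\\B_{21}&B_{22}\end{bmatrix}$ relative to $\mathcal H_1\oplus\mathcal H_2$, your pointwise identities $Ah=B^*Ph$, $Bh=A^*Ph$ on $\mathcal H_1$ amount to the block equations $A_{11}=B_{11}^*P_1$, $A_{21}=B_{12}^*P_1$, $B_{11}=A_{11}^*P_1$, $B_{21}=A_{12}^*P_1$; these alone do \emph{not} force $A_{21}=0$ or $B_{21}=0$. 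Your suggested iteration $A=B^*P=(A^*P)^*P$ gives only $Ah=P^*APh$ on $\mathcal H_1$, which is automatic from commutativity and says nothing about where $Ah$ lands. What is actually needed---and what the paper does---is to bring in the commutativity $AP=PA$ at the block level, yielding $A_{21}P_1=P_2A_{21}$, combine this with $A_{21}=B_{12}^*P_1$ to obtain $B_{12}^*P_1=P_2B_{12}^*$, and then compute $P_2P_2^*A_{21}=A_{21}=P_2^*P_2A_{21}$. This shows the range of $A_{21}$ is a reducing subspace of $P_2$ on which $P_2$ is unitary; since $P_2$ is c.n.u., $A_{21}=0$. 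Similarly $B_{21}=0$, and then $A_{12}=B_{21}^*P_2=0$ and $B_{12}=A_{21}^*P_2=0$ from the fundamental equations applied on $\mathcal H_2$.

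So your ``maximality'' intuition is precisely the right endgame, but it does not close by itself: you must first manufacture, from the interplay of the fundamental equations with $AP=PA$, a concrete $P_2$-reducing subspace (the range of $A_{21}$) and verify $P_2$ is unitary there. Without that computation the circularity you identified is not broken.
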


\begin{proof}

    If $P$ is a c.n.u contraction
    then $\mathcal H_1=\{0\}$ and if $P$ is a unitary then $\mathcal H_2=\{0\}$. In such cases the
    theorem is trivial. Suppose $P$ is neither a unitary nor a c.n.u contraction. With respect to the decomposition $\mathcal H=\mathcal H_1\oplus
    \mathcal H_2$, let
    \[
    A=
    \begin{bmatrix}
    A_{11}&A_{12}\\
    A_{21}&A_{22}
    \end{bmatrix}\,,\,
    B=
    \begin{bmatrix}
    B_{11}&B_{12}\\
    B_{21}&B_{22}
    \end{bmatrix}
    \text{ and } P=
    \begin{bmatrix}
    P_1&0\\
    0&P_2
    \end{bmatrix}
    \]
    so that $P_1$ is a unitary and $P_2$ is completely
    non-unitary. Since $P_2$ is completely non-unitary it follows that
    if $x\in\mathcal H$ and
    \[
    \|P_2^nx\|=\|x\|=\|{P_2^*}^nx\|, \quad n=1,2,\hdots
    \]
    then $x=0$. Since $(A, B, P)$ is an $\mathbb E$-contraction, by Theorem \ref{exist-tetra}, there are two unique operators $F_1$ and $F_2$ on $\mathcal{D}_P$ such that,
    \[
    A-B^*P = D_PF_1D_P, \qquad B-A^*P = D_PF_2D_P.
    \]
    With respect to the decomposition $\mathcal{D}_P = \mathcal{D}_{P_1} \oplus \mathcal{D}_{P_2} = \{0\}\oplus\mathcal{D}_{P_2}$, let
    \[ F_i=
    \begin{bmatrix}
    0 & 0\\
    0 & F_{i22}
    \end{bmatrix}
    \text{ for } i = 1, 2.
    \]
    Then from $A-B^*P = D_PF_1D_P$, we have
    \begin{align}
    A_{11}&=B_{11}^*P_1      & A_{12}&=B_{21}^*P_2\,, \label{eqn:1} \\
    A_{21}&=B_{12}^*P_1    & A_{22} - B_{22}^*P_2&=D_{P_2}F_{122}D_{P_2}\,. \label{eqn:2}
    \end{align}
    Similarly from $B-A^*P = D_PF_2D_P$, we have
    \begin{align}
    B_{11}&=A_{11}^*P_1      & B_{12}&=A_{21}^*P_2\,, \label{eqn:3} \\
    B_{21}&=A_{12}^*P_1    & B_{22} - A_{22}^*P_2&=D_{P_2}F_{222}D_{P_2}\,. \label{eqn:4}
    \end{align}

    Since $AP = PA$, so we have
    \begin{align}
    A_{11}P_1&=P_1A_{11}    & A_{12}P_2=P_1A_{12}\,, \label{eqn:5} \\
    A_{21}P_1&=P_2A_{21}    & A_{22}P_2=P_2A_{22}\,. \label{eqn:6}
    \end{align}
    Also by commutativity of $B$ and $P$ we have
    \begin{align}
    B_{11}P_1&=P_1B_{11}    & B_{12}P_2=P_1B_{12}\,, \label{eqn:7} \\
    B_{21}P_1&=P_2B_{21}    & B_{22}P_2=P_2B_{22}\,. \label{eqn:8}
    \end{align}
    Now from the first equation in (\ref{eqn:2}) we have
    \[
    B_{12}^*P_1^2  = A_{21}P_1 = P_2A_{21} = P_2B_{12}^*P_1.
    \]
    Since $P_1$ is a unitary, so $B_{12}^*P_1 = P_2B_{12}^*$. Now
    \begin{equation}\label{eqn:9}
     P_2P_2^*A_{21}  = P_2B_{12}^* = B_{12}^*P_1 = A_{21}
    \end{equation}    
    and
    \begin{equation}\label{eqn:10}
    P_2^*P_2A_{21}  = P_2^*A_{21}P_1 = B_{12}^*P_1 = A_{21}.
    \end{equation}
    From the first equation in (\ref{eqn:6}) we have range of $A_{21}$ is invariant under $P_2$. Again from $A_{21}P_1 = P_2A_{21}$ we have
    \[
    P_2^*A_{21}P_1 = P_2^*P_2A_{21} = A_{21}.
    \]
    Since $P_1$ is unitary, the range of $A_{21}$ is invariant under $P_2^*$. Thus the range of $A_{21}$ is a reducing subspace of $P_2$. Again by (\ref{eqn:9}) and (\ref{eqn:10}) we have that $P_2$ is unitary on the range of $A_{21}$. Since $P_2$ is a c.n.u contraction, we must have $A_{21}=0$. Similarly we can prove that $B_{21} = 0$. Now from (\ref{eqn:1}), $A_{12} = 0$. Thus with respect to the decomposition $\mathcal{H} =\mathcal{H}_1 \oplus \mathcal{H}_2$
    \[
    A=
    \begin{bmatrix}
    A_{11}&0\\
    0&A_{22}
    \end{bmatrix}\,,\,
    B=
    \begin{bmatrix}
    B_{11}&0\\
    0&B_{22}
    \end{bmatrix}.
    \]
So, $\mathcal{H}_1$ and $\mathcal{H}_2$ reduce $A$ and $B$. Now $(A_{22}, B_{22},P_2)$ is the restriction of the $\mathbb E$-contraction $(A, B, P)$ to the reducing subspace $\mathcal{H}_2$. Therefore, $(A_{22}, B_{22}, P_2)$ is an $\mathbb E$-contraction. Since $P_2$ is c.n.u contraction, $(A_{22}, B_{22}, P_2)$ is a c.n.u $\mathbb E$-contraction. Also $(A_{11},B_{11},P_1)$ is an $\mathbb E$-contraction with $P_1$ being a unitary. Therefore, by
Theorem \ref{thm:tu}, $(A_{11},B_{11},P_1)$ is an $\mathbb E$-unitary and the proof is complete.

\end{proof}

Being armed with all these machineries we are now in a position to present the main theorem of this section and this is a generalization of Theorem \ref{thm:CD} for finitely many commuting contractions. Note that this result is a variant of Lemma 2.2 of \cite{J-Es}, which we prove here in an alternative way.

	\begin{thm}\label{thm:decomp1}
	
Let $T_1,\dots, T_n$ be commuting contractions acting on a Hilbert space $\HS$. Then there corresponds a decomposition of $\HS$ into an orthogonal sum of two subspace $\HS_1,\HS_2$ of $\HS$ such that
\begin{enumerate}

\item $\HS_i$ is a joint reducing subspace of $T_1,\dots, T_{n},$ for $i=1,2$;
 
\item $\HS_1$ is the maximal joint reducing subspace of $T_1,\dots, T_n$ such that $T_j|_{\HS_1}$ is a unitary for $j=1,\dots ,n$, that is, $(T_1|_{\HS_1},\dots,T_n|_{\HS_1})$ is a totally unitary tuple;

\item $(T_1|_{\HS_2},\dots,T_n|_{\HS_2})$ is a completely non-unitary tuple;

\item if $T=\prod_{i=1}^n T_i \,,$ then $\HS_1$ is the maximal reducing subspace of $T$ such that $T|_{\HS_1}$ is unitary;

\item $\HS_1$ consists of those elements $h\in \HS$ for which
\[
\|T^nh\|=\|h\|=\|T^{*n}h\|\,, \quad \qquad (n=1,2,\dots)\, ;
\]

\end{enumerate}
This decomposition is uniquely determined and anyone of $\HS_1, \HS_2$ may coincide with the trivial subspace $\{0\}$.

\end{thm}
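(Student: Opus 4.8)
The plan is to reduce the whole statement to the single-operator canonical decomposition (Theorem \ref{thm:CD}) applied to the product $T=\prod_{i=1}^n T_i$, and then use the tetrablock machinery to promote reducibility of $T$ to reducibility of each individual factor. First I would set $T=\prod_{i=1}^n T_i$, which is a contraction since it is a product of commuting contractions, and invoke Theorem \ref{thm:CD} to write $\HS=\HS_1\oplus\HS_2$, where $\HS_1$ is the maximal reducing subspace of $T$ on which $T$ is unitary, $T|_{\HS_2}$ is c.n.u., and
\[
\HS_1=\{h\in\HS:\ \|T^nh\|=\|h\|=\|T^{*n}h\|,\ n=1,2,\dots\}.
\]
This gives (4) and (5) at once; what remains is to show that this single subspace $\HS_1$ serves simultaneously for all the $T_i$.

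The key step, and what I expect to be the main obstacle, is upgrading ``$\HS_1$ reduces $T$'' to ``$\HS_1$ reduces each $T_i$'', since a priori there is no reason a reducing subspace of a product should reduce the factors. For this I would, for each fixed $i$, pair $T_i$ against the product of the remaining factors: by Lemma \ref{lem:triangular1} the triple $\big(T_i,\ \prod_{j\neq i}T_j,\ T\big)$ is an $\E$-contraction. Applying the canonical decomposition of an $\E$-contraction (Theorem \ref{thm:E-decomp1}) to it, and observing that the maximal subspace reducing the last entry $T$ on which $T$ is unitary is exactly the $\HS_1$ produced above, I conclude that $\HS_1$ (and hence $\HS_2$) reduces both $T_i$ and $\prod_{j\neq i}T_j$. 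Letting $i$ range over $1,\dots,n$ shows that $\HS_1$ is a common reducing subspace for $T_1,\dots,T_n$, which is (1).

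With reducibility secured, (2) is quick: on $\HS_1$ we have $T|_{\HS_1}=\prod_i T_i|_{\HS_1}$ unitary, so by the corollary to Lemma \ref{lem:E-1} each $T_i|_{\HS_1}$ is a unitary, i.e. the tuple is totally unitary. Maximality then follows because any common reducing subspace $\M$ on which every $T_j$ is unitary carries the unitary product $\prod_j T_j|_{\M}=T|_{\M}$, so $\M$ is a reducing subspace of $T$ on which $T$ is unitary and therefore $\M\subseteq\HS_1$ by (4). For (3), the restriction $T|_{\HS_2}$ is c.n.u.\ by Theorem \ref{thm:CD}, and $\HS_2$ carries no nonzero common reducing subspace on which all the $T_j$ are unitary: such a subspace would again have unitary product and hence be absorbed into $\HS_1$, contradicting $\M\perp\HS_1$. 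Finally, uniqueness is forced by the intrinsic description of $\HS_1$ in (5), since that formula depends only on $T$.

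The one delicate point I would verify carefully is the identification, for every $i$, of the subspace $\HS_1$ furnished by Theorem \ref{thm:E-decomp1} with the single fixed $\HS_1$ coming from Theorem \ref{thm:CD}. This rests on the fact that in both theorems $\HS_1$ is characterized as the \emph{maximal reducing subspace of $T$ on which $T$ is unitary}, so the two coincide and the $n$ separate applications of Theorem \ref{thm:E-decomp1} all act on the same space; this is what lets the argument close without any iteration or dimension restriction.
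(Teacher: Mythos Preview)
Your proposal is correct and follows essentially the same route as the paper: form $T=\prod_i T_i$, take $\HS_1$ from the canonical decomposition of $T$ (Theorem~\ref{thm:CD}), and for each $i$ apply Theorem~\ref{thm:E-decomp1} to the $\E$-contraction $(T_i,\prod_{j\neq i}T_j,T)$ to see that $\HS_1,\HS_2$ reduce every $T_i$, then use Lemma~\ref{lem:E-1} to get unitarity of each $T_i|_{\HS_1}$ and the maximality argument for (2)--(3). You also make explicit the point the paper leaves implicit, namely that the subspace $\HS_1$ produced by Theorem~\ref{thm:E-decomp1} is the \emph{same} for every $i$ because it is characterized solely in terms of $T$.
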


\begin{proof}

Let $T_i'=\prod T_j$, where $j$ runs over the set $\{1,\dots, i-1,i+1,\dots, n\}$ and also let $T=\prod_{j=1}^n T_j$. By Lemma \ref{lem:triangular1}, $(T_i,T_i',T)$ is an $\E$-contraction for every $i=1,\dots ,n$. By Theorem \ref{thm:E-decomp1}, every $\E$-contraction admits a canonical decomposition. So, if $\HS_1$ is the maximal subspace of $\HS$ that reduces $T$ and on which $T$ is unitary and if $\HS_2=\HS \ominus \HS_1$, then for the $\E$-contraction $(T_i,T_i',T)$ we have by Theorem \ref{thm:E-decomp1} that
\begin{enumerate}
\item $\mathcal H_1,\mathcal H_2$ reduce $T_i,T_i'$,

\item $(T_i|_{\mathcal H_1},T_i'|_{\mathcal H_1},T|_{\mathcal H_1})$ is an $\E$-unitary,

\item $(T_i|_{\mathcal H_2},T_i'|_{\mathcal H_2},T|_{\mathcal H_2})$ is a completely non-unitary $\E$-contraction.
\end{enumerate}
The subspace $\mathcal H_1$ or $\mathcal H_2$ may coincide with the
trivial subspace $\{0\}$. Since $(T_i|_{\mathcal H_1})(T_i'|_{\mathcal H_1})=T|_{\mathcal H_1}$ and since $T|_{\mathcal H_1}$ is unitary, it follows from Lemma \ref{lem:E-1} that $T_i|_{\HS_1}$ is unitary for $i=1,\dots, n$. If $\{0\} \neq\HS_3\subseteq \HS_2$ reduces $T_1,\dots, T_n$ and if $T_i|_{\HS_3}$ is unitary for $i=1,\dots, n$, then $\HS_3$ reduces $T=\prod_{i=1}^n T_i$ and $T|_{\HS_3}$ is unitary which contradicts the fact that $\HS_1$ is the maximal reducing subspace of $T$ such that $T|_{\HS_1}$ is unitary. Hence $\HS_1$ is the maximal subspace of $\HS$ which reduces all of $T_1,\dots ,T_n$ and on which each $T_i$ is unitary. Therefore, at least one of ${T_1|_{\HS_2}}, \dots, T_n|_{\HS_2}$ is a c.n.u. contraction. Consequently we have that$(T_1|_{\HS_1},\dots,T_n|_{\HS_1})$ is a totally unitary tuple and that $(T_1|_{\HS_2},\dots,T_n|_{\HS_2})$ is a c.n.u. tuple. Parts $(4) \,\&\, (5)$ of the theorem follow from the canonical decomposition of the contraction $T$ (see Theorem \ref{thm:CD}). Hence the proof is complete.

\end{proof}

\subsection{Wold decomposition of commuting isometries}
As a special case of Theorem \ref{thm:decomp1}, we obtain the following Wold decomposition of a finite number of commuting isometries.

\begin{thm}\label{thm:Wold1}
	
Let $V_1,\dots, V_n$ be commuting isometries acting on a Hilbert space $\HS$. Then there corresponds a decomposition of $\HS$ into an orthogonal sum of two subspace $\HS_1,\HS_2$ of $\HS$ such that
\begin{enumerate}

\item $\HS_i$ is a joint reducing subspace of $V_1,\dots, V_{n},$ for $i=1,2$;
 
\item $\HS_1$ is the maximal joint reducing subspace of $V_1,\dots, V_n$ such that $V_j|_{\HS_1}$ is a unitary for $j=1,\dots ,n$, that is, $(V_1|_{\HS_1},\dots,V_n|_{\HS_1})$ is a unitary tuple;

\item $(V_1|_{\HS_2},\dots,V_n|_{\HS_2})$ is a pure isometric tuple;

\item if $V=\prod_{i=1}^n V_i \,,$ then
\[
\HS_1=\cap_{n=0}^{\infty} V^n \HS \; \text{ and } \; \HS_2=M_{+}(\mathcal L), \; \text{ where } \; \mathcal L=\HS \ominus V\HS \; \text{ and } \; M_{+}(\mathcal L)= \oplus_{n=0}^{\infty}V^n\mathcal L.
\]
\end{enumerate}
This decomposition is uniquely determined and anyone of $\HS_1, \HS_2$ may be equal to the trivial subspace $\{0\}$.

\end{thm}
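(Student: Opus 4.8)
The plan is to obtain Theorem \ref{thm:Wold1} as a specialization of Theorem \ref{thm:decomp1}, since every isometry is in particular a contraction. The only additional inputs needed are the elementary observation that the product $V=\prod_{i=1}^n V_i$ of commuting isometries is again an isometry, together with the classical fact that a completely non-unitary isometry is a pure isometry (a unilateral shift), which is precisely the content of the von Neumann-Wold decomposition, Theorem \ref{Wold:decomposition}.

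First I would apply Theorem \ref{thm:decomp1} to the commuting contractions $V_1,\dots,V_n$. This immediately yields the orthogonal decomposition $\HS=\HS_1\oplus\HS_2$ into joint reducing subspaces, establishes part (1), identifies $\HS_1$ as the maximal joint reducing subspace on which every $V_j$ is unitary (part (2)), and tells us that $(V_1|_{\HS_2},\dots,V_n|_{\HS_2})$ is a c.n.u.\ tuple. To upgrade this last conclusion to part (3), I would note that since $\HS_2$ reduces each $V_j$, the restriction $V_j|_{\HS_2}$ remains an isometry for every $j$. The c.n.u.\ tuple property says that at least one $V_{j_0}|_{\HS_2}$ is a c.n.u.\ contraction; but a c.n.u.\ isometry has trivial unitary part in its von Neumann-Wold decomposition and is therefore a pure isometry. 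Thus all the restrictions are isometries and at least one is a unilateral shift, so $(V_1|_{\HS_2},\dots,V_n|_{\HS_2})$ is a pure isometric tuple.

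For part (4), I would invoke part (4) of Theorem \ref{thm:decomp1}, which identifies $\HS_1$ with the maximal reducing subspace of $V=\prod_{i=1}^n V_i$ on which $V$ is unitary. Since $V$ is an isometry, the von Neumann-Wold decomposition (Theorem \ref{Wold:decomposition}) describes this maximal unitary part explicitly as $\cap_{n=0}^\infty V^n\HS$, with the complementary pure isometry part equal to $M_+(\mathcal L)$, where $\mathcal L=\HS\ominus V\HS$. Matching these with $\HS_1$ and $\HS_2$ gives the stated formulas, and uniqueness is inherited directly from the uniqueness asserted in Theorem \ref{thm:decomp1} (equivalently, from the uniqueness of the von Neumann-Wold decomposition of $V$).

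Since the argument is essentially a dictionary translation between the contraction-level statement of Theorem \ref{thm:decomp1} and its isometric refinement, I do not anticipate a genuine obstacle. The one point requiring a little care is verifying that the two descriptions of $\HS_1$ coincide, namely that the maximal reducing subspace on which the single operator $V$ acts unitarily is precisely $\cap_{n=0}^\infty V^n\HS$; this, however, is immediate from Theorem \ref{Wold:decomposition} applied to the isometry $V$.
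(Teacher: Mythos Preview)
Your proposal is correct and follows essentially the same route as the paper: both obtain the result as the isometric specialization of Theorem \ref{thm:decomp1}, noting that for the isometry $V=\prod V_i$ the canonical decomposition coincides with the von Neumann--Wold decomposition, which yields part (4). The only cosmetic difference is that the paper phrases the argument as re-running the proof of Theorem \ref{thm:decomp1} via the $\mathbb E$-isometry Wold decomposition (Theorem \ref{thm:ti}(4)) in place of Theorem \ref{thm:E-decomp1}, whereas you invoke Theorem \ref{thm:decomp1} as a black box and then specialize; the content is the same.
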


\begin{proof}
We follow the same method as of Theorem \ref{thm:decomp1}, where the canonical decomposition of the isometry $V=\prod_{i=1}^n V_i$ is same as the Wold decomposition of $V$. Clearly $(V_i,V_i',V)$ is an $\E$-isometry by part-(2) of Theorem \ref{thm:ti}. In Theorem \ref{thm:decomp1}, we reached the desired conclusion by applying the canonical decomposition of an $\E$-contraction (see Theorem \ref{thm:E-decomp1}). Here, we shall apply the special case, i.e., the Wold decomposition of an $\E$-isometry (see part-(4) of Theorem \ref{thm:ti}). Part-(4) of the theorem follows from the Wold decomposition of the isometry $V$. 

\end{proof}

\section{Canonical decomposition of finitely many doubly commuting contractions} \label{sec:4}

\vspace{0.2cm}

\noindent In this Section, we shall make a refinement of the canonical decomposition described in Theorem \ref{thm:decomp1} for any finite number of doubly commuting contractions. Also, as we have mentioned before that examples exist in the literature (viz. \cite{M.Slo, B-K-S}) which guarantee that this refined decomposition is not valid in general for commuting contractions, in fact it is not valid for commuting isometries. Once again by an application of the canonical decomposition of a couple of certain $\E$-contractions, we achieve our result. As a consequence, we obtain as a special case a refined Wold decomposition for a finite family of doubly commuting isometries. We begin with a few preparatory results.

\begin{thm}\label{thm:E-contraction}
Let $(A, B, P)$ be a commuting triple of contractions acting on a Hilbert space $\HS$. Suppose there exist operators $F_1,F_2\in\mathcal B ({\mathcal D_P})$ that satisfy the following :
\begin{enumerate}
\item $\Omega (F_1+zF_2)\leq 1 \quad$ for all $z\in \ov{\D}$ ;
\item $ A-B^*P=D_PF_1D_P$ and $B-A^*P=D_PF_2D_P\,$ ; 
\item $[F_1,F_2]=0$ and $[F_1^*,F_1]=[F_2^*,F_2]$ .
\end{enumerate}
Then $(A,B,P)$ is an $\E$-contraction that dilates to an $\E$-isometry.
\end{thm}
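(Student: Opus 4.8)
The plan is to realize $(A,B,P)$ explicitly as the compression of an $\E$-isometry to a co-invariant subspace, taking as candidate dilation the Sz.-Nagy--Schäffer-type triple of Theorem \ref{thm:tetra-dilation}. Concretely, on $\mathcal K = \HS \oplus \ell^2(\mathcal D_P)$ I would define $V_1, V_2, V_3$ by the very formulas displayed in Theorem \ref{thm:tetra-dilation}, now feeding in the given operators $F_1, F_2$ as data rather than as fundamental operators extracted from an already-known $\E$-contraction. Since that theorem presupposes that $(A,B,P)$ is an $\E$-contraction, I cannot invoke it directly; instead I would prove from scratch that the triple so defined is an $\E$-isometry, and then run the compression argument in reverse to deduce that $(A,B,P)$ is an $\E$-contraction.

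First I would record the structural facts. The operator $V_3$ is precisely the minimal isometric (Sz.-Nagy--Schäffer) dilation of the contraction $P$, hence an isometry with no extra hypotheses. A direct adjoint computation shows that $\HS$, embedded as the zeroth summand, is invariant under $V_1^*, V_2^*, V_3^*$ with $V_1^*|_{\HS} = A^*$, $V_2^*|_{\HS} = B^*$, $V_3^*|_{\HS} = P^*$; thus $\HS$ is co-invariant and the compressions of $V_1,V_2,V_3$ to $\HS$ return $A,B,P$. The core task is then to verify the three defining conditions of an $\E$-isometry in the form of Theorem \ref{thm:ti}(3): that $(V_1,V_2,V_3)$ is a commuting triple with $V_3$ an isometry, $V_2$ a contraction, and $V_1 = V_2^* V_3$.

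Each hypothesis plays a distinct role, and I would allocate them accordingly. Commutativity of the triple is exactly where hypothesis (3), $[F_1,F_2]=0$ and $[F_1^*,F_1]=[F_2^*,F_2]$, is used; this is the same block computation underlying Theorem \ref{thm:tetra-dilation}(1), which I would carry out (or cite). The identity $V_1 = V_2^* V_3$ follows from the fundamental equations of hypothesis (2) alone: computing $V_2^* V_3$ componentwise, the zeroth component collapses via $B^*P + D_PF_1D_P = B^*P + (A - B^*P) = A$, while every higher component matches the defining formula for $V_1$ automatically, with no appeal to commutation.

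The main obstacle is showing that $V_2$ is a contraction, and this is precisely where hypothesis (1), $\Omega(F_1 + zF_2)\le 1$ for all $z\in\ov{\D}$, enters. Writing $V_2$ as a block operator on $\HS\oplus\ell^2(\mathcal D_P)$, its action on the shift part is an analytic Toeplitz-type operator built from the pencil in $F_1,F_2$, coupled to the $\HS$-block through $D_P$ and the fundamental equations; I would reduce the estimate $\|V_2\|\le 1$ to the contractivity of this symbol over $\ov{\D}$, which is exactly the content of $\Omega$ in hypothesis (1), the commutation relations in (3) being what make the reduction clean. Once $(V_1,V_2,V_3)$ is identified as an $\E$-isometry via Theorem \ref{thm:ti}, it is in particular an $\E$-contraction, so $\ov{\E}$ is a spectral set for it. Transferring through the co-invariant subspace $\HS$ then gives, for every polynomial $f$, the chain $\|f(A,B,P)\| = \|f(A,B,P)^*\| = \|\,f(V_1,V_2,V_3)^*|_{\HS}\,\| \le \|f(V_1,V_2,V_3)\| \le \|f\|_{\infty,\,\ov{\E}}$; since $\ov{\E}$ is polynomially convex, Lemma \ref{lem:23} upgrades this to the full spectral-set inequality, so $(A,B,P)$ is an $\E$-contraction, and by construction $(V_1,V_2,V_3)$ is the asserted $\E$-isometric dilation.
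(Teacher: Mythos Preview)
Your proposal is correct and follows essentially the same route as the paper: construct the Sz.-Nagy--Sch\"affer triple $(V_1,V_2,V_3)$ on $\mathcal K=\HS\oplus\ell^2(\mathcal D_P)$ from the data $F_1,F_2$, verify that it satisfies condition~(3) of Theorem~\ref{thm:ti} (hence is an $\E$-isometry), and deduce that $(A,B,P)$ is an $\E$-contraction by compression. The paper simply appeals to ``the proof of Theorem~\ref{thm:tetra-dilation}'' for the verification you propose to carry out explicitly; your allocation of hypotheses (1)--(3) to the contractivity of $V_2$, the identity $V_1=V_2^*V_3$, and the commutativity respectively matches that proof.

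The only noteworthy difference is in the final step: the paper passes from the $\E$-isometric dilation to an $\E$-unitary extension and invokes Arveson's theorem (normal $b\ov{\E}$-dilation $\Rightarrow$ $\ov{\E}$ is a complete spectral set $\Rightarrow$ spectral set), whereas you argue directly via the co-invariance of $\HS$ and the polynomial inequality, then upgrade with Lemma~\ref{lem:23}. Both are standard and equivalent here; your route is slightly more elementary in that it avoids the Arveson machinery, while the paper's route yields the stronger conclusion that $\ov{\E}$ is in fact a \emph{complete} spectral set for $(A,B,P)$.
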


\begin{proof}

If $A,B,P$ and $F_1,F_2$ satisfy the hypotheses, then we can construct $(V_1,V_2,V_3)$ as in Theorem \ref{thm:tetra-dilation} which by the proof of of Theorem \ref{thm:tetra-dilation} is an $\E$-isometry. Indeed, $(V_1,V_2,V_3)$ satisfies condition-(3) of Theorem \ref{thm:ti} then. Thus $(V_1,V_2,V_3)$ is an $\E$-isometric dilation of $(A,B,P)$. Since every $\E$-isometry, by definition, extends to an $\E$-unitary, it follows that $(A,B,P)$ has an $\E$-unitary dilation. Therefore, according to Arveson's terminology, $\ov{\E}$ is a complete spectral set for $(A,B,P)$ which implies that $\ov{\E}$ is a spectral set for $(A,B,P)$. Hence $(A,B,P)$ is an $\E$-contraction. Again Theorem \ref{thm:ti} clearly shows that $(A,B,P)$ dilates to the $\E$-isometry $(V_1,V_2,V_3)$.

\end{proof}

The following theorem plays the central role in the rest of the paper. A similar result is there in the literature (e.g. see Theorem I.3.2 in \cite{Nagy-Foias}). Our proof here is independent and is based on operator theory on the tetrablock. 

\begin{thm}\label{thm:key}
For a pair of doubly commuting contractions $P,Q$ acting on a Hilbert space $\HS$, if $Q=Q_1\oplus Q_2$ is the canonical decomposition of $Q$ with respect to the orthogonal decomposition $\HS=\HS_1 \oplus \HS_2$, then $\HS_1,\HS_2$ are reducing subspaces for $P$.
\end{thm}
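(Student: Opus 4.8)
The plan is to bypass the tetrablock machinery and argue directly from the intrinsic description of the unitary part of $Q$ furnished by Theorem \ref{thm:CD}, converting the double-commutativity hypothesis into commutation with two canonically associated positive operators. Recall that the canonical decomposition identifies $\HS_1$ (on which $Q$ is unitary) as the set of those $h\in\HS$ satisfying $\|Q^nh\|=\|h\|=\|Q^{*n}h\|$ for every $n\geq 1$. The first step is to encode each of these two families of norm equalities as the fixed-point set of a single self-adjoint contraction, so that the reducing structure for $Q$ can be transferred to $P$.

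Concretely, I would introduce the two strong limits
\[
A=\text{s-}\lim_{n\to\infty}Q^{*n}Q^{n},\qquad B=\text{s-}\lim_{n\to\infty}Q^{n}Q^{*n}.
\]
Both exist: since $Q$ is a contraction, $\{Q^{*n}Q^n\}$ is a decreasing sequence of positive contractions, and conjugation-monotonicity ($X\leq Y\imp QXQ^*\leq QYQ^*$) shows by induction that $\{Q^nQ^{*n}\}$ is decreasing as well; hence both sequences converge in the strong operator topology to positive contractions $A,B$ with $0\leq A,B\leq I$. Because $\langle Ah,h\rangle=\lim_n\|Q^nh\|^2$ and $\langle Bh,h\rangle=\lim_n\|Q^{*n}h\|^2$, and because a decreasing sequence bounded above by $\|h\|^2$ attains that value only when it is constant, positivity of $I-A$ and $I-B$ gives the clean reformulation
\[
\HS_1=\ker(I-A)\cap\ker(I-B).
\]

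Now double commutativity does the work. From $PQ=QP$ and $PQ^*=Q^*P$ one obtains $PQ^n=Q^nP$ and $PQ^{*n}=Q^{*n}P$ for all $n$, whence $P(Q^{*n}Q^n)=(Q^{*n}Q^n)P$ and $P(Q^nQ^{*n})=(Q^nQ^{*n})P$. Passing to strong limits (left and right multiplication by the fixed bounded operator $P$ is strongly continuous) yields $PA=AP$ and $PB=BP$; taking adjoints and using that $A,B$ are self-adjoint gives $P^*A=AP^*$ and $P^*B=BP^*$. Since $P$ and $P^*$ commute with the self-adjoint operators $A$ and $B$, they each leave $\ker(I-A)$ and $\ker(I-B)$ invariant, hence leave their intersection $\HS_1$ invariant. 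Thus $P\HS_1\subseteq\HS_1$ and $P^*\HS_1\subseteq\HS_1$, so $\HS_1$ reduces $P$, and its orthogonal complement $\HS_2$ reduces $P$ automatically.

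The only genuinely delicate point is the passage from the norm description of $\HS_1$ to a kernel description built from operators that manifestly commute with $P$; this is precisely where both halves of the hypothesis are indispensable, since commutation of $P$ with $Q^{*n}Q^n$ requires $P$ to commute with $Q^*$ as well as with $Q$. (One could instead route the argument through the tetrablock by exhibiting a suitable $\E$-contraction having $Q$ as its third component and invoking Theorem \ref{thm:E-decomp1}, but the direct argument above appears shortest.)
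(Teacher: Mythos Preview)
Your argument is correct. The two approaches, however, are genuinely different.

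The paper's proof is deliberately routed through the tetrablock: one manufactures the triples $\left(\dfrac{P+P^*Q}{2},\dfrac{P+P^*Q}{2},Q\right)$ and $\left(\dfrac{P-P^*Q}{2},\dfrac{P-P^*Q}{2},-Q\right)$, uses the double commutativity $PD_Q=D_QP$ to verify (via Theorem~\ref{thm:E-contraction}) that these are $\E$-contractions with fundamental operators $F_1=F_2=\dfrac{1}{2}P|_{\mathcal D_Q}$, and then invokes the canonical decomposition of an $\E$-contraction (Theorem~\ref{thm:E-decomp1}) to see that $\HS_1,\HS_2$ reduce each of the two first components; adding them recovers $P$. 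Your route is the classical one the paper itself alludes to (``a similar result is there in the literature, e.g.\ Theorem~I.3.2 in \cite{Nagy-Foias}''): you pass to the asymptotic limits $A=\text{s-}\lim Q^{*n}Q^n$ and $B=\text{s-}\lim Q^nQ^{*n}$, identify $\HS_1=\ker(I-A)\cap\ker(I-B)$, and transfer the reducing structure via $[P,A]=[P,B]=0$.

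What each buys: your proof is short, elementary, and self-contained, needing nothing beyond Theorem~\ref{thm:CD} and strong-operator limits. The paper's proof is longer and leans on the $\E$-contraction machinery built earlier, but that is precisely the point of the paper---to exhibit Theorem~\ref{thm:key} as a consequence of the structure theory of $\E$-contractions rather than to find the shortest path. Your closing parenthetical remark anticipates exactly the route the paper takes.
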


\begin{proof}

Consider the operator triples $$\left( \dfrac{P+P^*Q}{2}, \dfrac{P+P^*Q}{2}, Q \right)\,,\;\left( \dfrac{P-P^*Q}{2}, \dfrac{P-P^*Q}{2}, -Q \right)$$ acting on $\HS$. We first show that they are $\E$-contractions. They are commuting triples of contractions as $P,Q$ are doubly commuting contractions. We shall apply Theorem \ref{thm:E-contraction} to show that they are $\E$-contractions. By the double commutativity of $P,Q$ we have that
\[
PD_Q^2=P(I-Q^*Q)=(I-Q^*Q)P=D_Q^2P
\]
and hence it follows by iteration that
\begin{equation}\label{eqn:a01}
P(D_Q^2)^n=(D_Q^2)^nP \quad \text{ for } n=0,1,2,\dots \;.
\end{equation}
Consequently $Pf(D_Q^2)=f(D_Q^2)P$, for any polynomial $f\in \C [z]$. Choose a sequence of polynomials $\{f_m\}$ in $\C [z]$ that converges to the function $z^{1/2}$ uniformly on the interval $[0,1]$ as $m \rightarrow \infty$. The sequence of operators $\{ f_m(T) \}$ then converges in norm to $T^{1/2}$ for any positive operator $T$, where $T^{1/2}$ denotes the unique positive square root of $T$. For this sequence of polynomials, we obtain from (\ref{eqn:a01}) by taking limit as $m \rightarrow \infty$ that $PD_Q=D_QP$. Now
\[
\dfrac{P+P^*Q}{2}- \left( \dfrac{P+P^*Q}{2} \right)^*Q=\dfrac{1}{2}PD_Q^2=\dfrac{1}{2}D_Q^2P=\dfrac{1}{2}D_QPD_Q \qquad [\text{since } PD_Q=D_QP].
\]
This shows that the space $\mathcal D_Q$ is invariant under $P$ and that $\dfrac{1}{2}P|_{\mathcal D_Q}, \dfrac{1}{2}P|_{\mathcal D_Q}$ satisfy the fundamental equations (as in Theorem \ref{exist-tetra}) for the triple $\left( \dfrac{P+P^*Q}{2}, \dfrac{P+P^*Q}{2}, Q \right)$. Also since $P$ is a contraction, it follows that the numerical radius $\omega \left(\dfrac{1}{2}P|_{\mathcal D_Q}+z\,\dfrac{1}{2}P|_{\mathcal D_Q} \right)\leq 1$ and the condition-(3) of Theorem \ref{thm:E-contraction} follows trivially for $F_1=F_2=\dfrac{1}{2}P|_{\mathcal D_Q}$. Hence by Theorem \ref{thm:E-contraction}, $\left( \dfrac{P+P^*Q}{2}, \dfrac{P+P^*Q}{2}, Q \right)$ is an $\E$-contraction. Applying the same argument to the doubly commuting contractions $P, -Q$ we have that $\left( \dfrac{P-P^*Q}{2}, \dfrac{P-P^*Q}{2}, -Q \right)$ is also an $\E$-contraction.

Now if $Q=Q_1\oplus Q_2$ is the canonical decomposition of $Q$ with respect to $\HS=\HS_1\oplus \HS_2$, then the canonical decomposition of $-Q$ is $-Q=-Q_1 \oplus -Q_2$ with respect to the same orthogonal decomposition of $\HS$, that is, $\HS=\HS_1\oplus \HS_2$. We now consider the canonical decomposition of the $\E$-contraction $\left( \dfrac{P+P^*Q}{2}, \dfrac{P+P^*Q}{2}, Q \right)$ as in Theorem \ref{thm:E-decomp1} and have that $\HS_1,\HS_2$ are reducing subspaces for $\dfrac{P+P^*Q}{2}$. Similarly, considering the canonical decomposition of $\left( \dfrac{P-P^*Q}{2}, \dfrac{P-P^*Q}{2}, -Q \right)$, we have that $\HS_1,\HS_2$ are reducing subspaces for $\dfrac{P-P^*Q}{2}$. Hence $\HS_1, \HS_2$ are reducing subspaces for $\dfrac{P+P^*Q}{2}+\dfrac{P-P^*Q}{2}=P$ and the proof is complete.

\end{proof}

We shall use the following terminologies from here onward.
\begin{defn}
A contraction $T$ acting on a Hilbert space $\HS$ is said to be
\begin{enumerate}
\item[(i)] an \textit{atom} if $T$ is either a unitary or a c.n.u. contraction ;

\item[(ii)] an \textit{atom of type} $A1$ if $T$ is a unitary ; an \textit{atom of type} $A2$ if $T$ is a c.n.u. contraction ;

\item[(iii)] a \textit{non-atom} if $T$ is neither a unitary nor a c.n.u. contraction. 

\end{enumerate}

\end{defn}

Now we present a refinement of Theorem \ref{thm:decomp1} for any finite number of doubly commuting contractions which is one of the main results of this paper. We do not know if this result exists in the literature.

\begin{thm}\label{thm:decomp2}
	
Let $T_1,\dots, T_n$ be doubly commuting contractions acting on a Hilbert space $\HS$. Then there corresponds a decomposition of $\HS$ into an orthogonal sum of $2^n$ subspaces $\HS_1,\dots, \HS_{2^n}$ of $\HS$ such that 
\begin{enumerate}

\item each $\HS_j$, $1\leq j \leq 2^n$, is a joint reducing subspace for $T_1,\dots, T_{n}$;

\item $T_i|_{\HS_{j}}$ is either a unitary or a c.n.u. contraction for all $i=1,\dots , n$ and for $j= 1,\dots , 2^n$;

\item if $\mathcal F_{n}=\{(\mathcal A_{j}\,, \, \mathcal H_{j}): 1\leq j \leq 2^n  \}$, where $\mathcal A_{j}=( T_1|_{\HS_{j}},\dots , T_n|_{\HS_{j}})$ for each $j$ and if $\mathcal M_{n}$ is the set of all functions $f:\{1, \dots , n  \} \rightarrow \{A1\,,\,A2\}$, then corresponding to every $g \in \mathcal M_n$ there exists a unique element $(\mathcal A_g, \HS_g) \in \mathcal F_n$ such that the $k\textsuperscript{th}$ component of $\mathcal A_g$ is of the type $g(k)$, for $1\leq k \leq n$;
 
\item $\HS_1$ is the maximal joint reducing subspace for $T_1, \dots , T_n$ such that $T_1|_{\HS_1}, \dots, T_n|_{\HS_1}$ are unitaries;

\item $\HS_{2^n}$ is the maximal joint reducing subspace for $T_1, \dots , T_n$ such that $T_1|_{\HS_{2^n}}, \dots, T_n|_{\HS_{2^n}}$ are c.n.u. contractions.

\end{enumerate}

One or more members of $\{ \HS_j:\;j=1,\dots ,2^n \}$ may coincide with the trivial subspace $\{0\}$.

\end{thm}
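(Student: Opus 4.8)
The plan is to prove Theorem~\ref{thm:decomp2} by induction on $n$, the number of doubly commuting contractions, using Theorem~\ref{thm:key} as the engine that lets each new contraction be decomposed while respecting the reducing subspaces already produced by its predecessors. The base case $n=1$ is precisely the canonical decomposition of a single contraction (Theorem~\ref{thm:CD}): it splits $\HS$ into $\HS_1 \oplus \HS_{2}$ where $T_1|_{\HS_1}$ is unitary (type $A1$) and $T_1|_{\HS_2}$ is c.n.u.\ (type $A2$), giving $2^1$ pieces as required.

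For the inductive step, suppose the theorem holds for $n-1$ doubly commuting contractions. Applying it to $T_1,\dots,T_{n-1}$ yields an orthogonal decomposition $\HS=\bigoplus_{j=1}^{2^{n-1}}\HS_j$ into $2^{n-1}$ joint reducing subspaces on each of which every $T_i$ ($1\le i\le n-1$) is an atom, with the pieces indexed bijectively by the functions $g\colon\{1,\dots,n-1\}\to\{A1,A2\}$. The key point is that each $\HS_j$ reduces $T_1,\dots,T_{n-1}$, and since $T_n$ doubly commutes with each $T_i$, $T_n$ commutes with the orthogonal projections onto the reducing subspaces generated from $T_1,\dots,T_{n-1}$; hence each $\HS_j$ also reduces $T_n$. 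Now I would apply Theorem~\ref{thm:CD} to the restriction $T_n|_{\HS_j}$ to split each $\HS_j=\HS_j^{A1}\oplus\HS_j^{A2}$, where $T_n$ acts as a unitary on the first summand and as a c.n.u.\ contraction on the second. The crucial verification is that this finer decomposition of $\HS_j$ still reduces $T_1,\dots,T_{n-1}$: this is exactly where Theorem~\ref{thm:key} is invoked. For each fixed $i\in\{1,\dots,n-1\}$, the pair $(T_i|_{\HS_j},\,T_n|_{\HS_j})$ is a pair of doubly commuting contractions on $\HS_j$, so the canonical decomposition of $T_n|_{\HS_j}$ reduces $T_i|_{\HS_j}$; letting $i$ range over all of $\{1,\dots,n-1\}$ shows $\HS_j^{A1}$ and $\HS_j^{A2}$ are joint reducing subspaces for all of $T_1,\dots,T_n$.

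Collecting these $2\cdot 2^{n-1}=2^n$ pieces $\{\HS_j^{A1},\HS_j^{A2}\}_{j=1}^{2^{n-1}}$ gives the required orthogonal decomposition, establishing parts (1) and (2). For part (3), I would set up the indexing explicitly: a piece $\HS_j^{s}$ (with $s\in\{A1,A2\}$) corresponds to the function $\widehat g\colon\{1,\dots,n\}\to\{A1,A2\}$ that extends the function $g$ indexing $\HS_j$ by $\widehat g(n)=s$. Since the $n=1$ decomposition of a single contraction is unique and the inductive hypothesis gives a bijection between pieces and functions on $\{1,\dots,n-1\}$, this extension is a bijection between the $2^n$ pieces and $\mathcal M_n$, yielding existence and uniqueness of the piece realizing each prescribed type pattern. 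Parts (4) and (5) then follow by identifying $\HS_1$ with the piece indexed by the constant function $g\equiv A1$ and $\HS_{2^n}$ with the one indexed by $g\equiv A2$; maximality of $\HS_1$ among joint reducing subspaces on which every $T_i$ is unitary follows from the maximality of each unitary part in the single-operator canonical decomposition together with the fact that on any common reducing subspace where all $T_i$ are unitary, each coordinate must land in the $A1$ slot.

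I expect the main obstacle to be the careful bookkeeping in part (3), namely establishing the bijection with $\mathcal M_n$ and proving uniqueness of each typed piece, rather than any deep analytic content; the analytic heart of the argument is fully packaged in Theorem~\ref{thm:key}, which guarantees that decomposing with respect to one contraction never destroys the reducing structure already obtained for the others. A secondary point requiring care is confirming that the decomposition is independent of the order in which $T_1,\dots,T_n$ are processed --- this follows because the maximal unitary subspace of each $T_i$ is intrinsically determined (by the characterization $\|T_i^m h\|=\|h\|=\|T_i^{*m}h\|$ for all $m$ from Theorem~\ref{thm:CD}), so each final piece $\HS_j$ is precisely the intersection over $i$ of the appropriate unitary or c.n.u.\ part of $T_i$, a manifestly order-independent description that also secures the overall uniqueness of the decomposition.
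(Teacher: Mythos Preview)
Your proposal is correct and follows essentially the same approach as the paper: induction on $n$, with Theorem~\ref{thm:key} doing the work of ensuring that the canonical decomposition of each successive contraction is compatible with the joint reducing structure already built. One small point worth tightening: your claim that each $\HS_j$ obtained from $T_1,\dots,T_{n-1}$ automatically reduces $T_n$ ``since $T_n$ doubly commutes with each $T_i$'' is itself an instance of Theorem~\ref{thm:key} (applied at each stage of the iterated construction, or equivalently to each pair $(T_n,T_i)$ via the intersection description you give at the end) rather than a bare consequence of double commutativity with the $T_i$ alone---the paper makes this invocation explicit.
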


\begin{proof}

We prove by mathematical induction on $n$, the number of doubly commuting contractions. For $n=1$ it is just the canonical decomposition of a contraction and the theorem holds trivially. We shall show here a proof for the $n=2$ case to make our algorithm transparent to the readers. Let $T_1,T_2$ be doubly commuting contractions on $\HS$ and let $T_1=T_{11}\oplus T_{12}$ be the canonical decomposition of $T_1$ with respect to $\HS=\widetilde{\HS}_1 \oplus \widetilde{\HS}_2$, where $T_{11}=T_1|_{\widetilde{\HS}_1}$ is a unitary and $T_{12}=T_1|_{\widetilde{\HS}_2}$ is a c.n.u. contraction. By Theorem \ref{thm:key}, $\widetilde{\HS_1}, \widetilde{\HS_2}$ are reducing subspaces for $T_2$ and suppose $T_2=T_{21}\oplus T_{22}$ with respect to $\HS=\widetilde{\HS}_1 \oplus \widetilde{\HS}_2$. We now consider the doubly commuting operator pairs $(T_{11},T_{21})$ acting on $\widetilde{\HS}_1$ and $(T_{12}, T_{22})$ acting on $\widetilde{\HS}_2$. Note that $T_{11}$ and $T_{12}$ are atoms and thus we shall consider the canonical decomposition of the second components, i.e. $T_{21}$ and $T_{22}$. Let $T_{21}=T_{211}\oplus T_{212}$ be the canonical decomposition of $T_{21}$ with respect to $\widetilde{\HS}_1=\widetilde{\HS}_{11}\oplus \widetilde{\HS}_{12}$, where $T_{211}=T_{21}|_{\widetilde{\HS}_{11}}$ is a unitary and $T_{212}=T_{21}|_{\widetilde{\HS}_{12}}$ is a c.n.u. contraction. Then once again by Theorem \ref{thm:key}, $\widetilde{\HS}_{11},\widetilde{ \HS}_{12}$ are reducing subspaces for $T_{11}$ as it doubly commutes with $T_{21}$. Let $T_{11}=T_{111}\oplus T_{112}$ with respect to $\widetilde{\HS}_1=\widetilde{\HS}_{11}\oplus \widetilde{\HS}_{12}$. Thus, we obtain two new doubly commuting pairs from the pair $(T_{11}, T_{21})$ namely $(T_{111}, T_{211})$ acting on $\widetilde{\HS}_{11}$ and $(T_{112}, T_{212})$ acting on $\widetilde{\HS}_{12}$. Note that the pair $(T_{111}, T_{211})$ acting on $\widetilde{\HS}_{11}$ consists of unitaries only whereas in the pair $(T_{112}, T_{212})$ acting on $\widetilde{\HS}_{12}$, $T_{112}$ is a unitary and $T_{212}$ is a c.n.u. contraction. Similarly, if $T_{22}=T_{221} \oplus T_{222}$ is the canonical decomposition of $T_{22}$ with respect to $\widetilde{\HS}_{2}=\widetilde{\HS}_{21}\oplus \widetilde{\HS}_{22}$, where $T_{221}=T_{22}|_{\widetilde{\HS}_{21}}$ is a unitary and $T_{222}=T_{22}|_{\widetilde{\HS_{22}}}$ is a c.n.u contraction, then by Theorem \ref{thm:key}, ${\widetilde{\HS}_{21}}, {\widetilde{\HS}_{22}}$ reduce $T_{12}$. Let $T_{12}=T_{121}\oplus T_{122}$ with respect to $\widetilde{\HS}_{2}=\widetilde{\HS}_{21}\oplus \widetilde{\HS}_{22}$. Then in the pair $(T_{121},T_{221})$ acting on $\widetilde{\HS}_{21}$, we have that $T_{121}$ is a c.n.u. contraction and $T_{221}$ is a unitary. Also, in the pair $(T_{122}, T_{222})$ acting on $\HS_{22}$, we have that both $T_{122}$ and $T_{222}$ are c.n.u. contractions.
 Thus, if we denote by $\HS_1,\HS_2,\HS_3,\HS_4$ the spaces $\widetilde{\HS}_{11}, \widetilde{\HS}_{12}, \widetilde{\HS}_{21}, \widetilde{\HS}_{22}$ respectively, then we have the following:
 
\begin{enumerate}

\item each $\HS_j$, $1\leq j \leq 4$, is a joint reducing subspace for $T_1,T_{2}$;

\item $T_i|_{\HS_{j}}$ is either a unitary or a c.n.u. contraction for all $i=1,2$ and for $j= 1,\dots , 4$;

\item if $\mathcal F_{2}=\{(\mathcal A_{j}\,, \, \mathcal H_{j}): j =1, \dots, 4  \}$, where $\mathcal A_{j}=( T_1|_{\HS_{j}},T_2|_{\HS_{j}})$ for each $j$ and corresponding to every $g \in \mathcal M_2$ there exists a unique element $(\mathcal A_g, \HS_g) \in \mathcal F_2$ such that the $k\textsuperscript{th}$ component of $\mathcal A_g$ is of the type $g(k)$, for $k=1,2$;
 
\item $\HS_1$ is the maximal joint reducing subspace for $T_1, T_2$ such that $T_1|_{\HS_1},T_2|_{\HS_1}$ are unitaries;

\item $\HS_{4}$ is the maximal joint reducing subspace for $T_1,T_2$ such that $T_1|_{\HS_{4}}, T_2|_{\HS_{2^2}}$ are c.n.u. contractions.

\end{enumerate}
Thus, the theorem holds for $n=2$. Suppose that the theorem holds for $n=s$ and let $T_1,\dots ,T_s$ be doubly commuting contractions acting on $\HS$. Let $T$ be a contraction that doubly commutes with $T_1,\dots, T_s$. Then by Theorem \ref{thm:key}, each of $\HS_1,\dots, \HS_{2^s}$ reduces $T$. Consider the set of operator tuples $$\{(T_1|_{\HS_k}, \dots, T_s|_{\HS_k}, T|_{\HS_k})\,:\, 1\leq k \leq 2^n  \}$$ and choose an arbitrary member from it, say $(T_1|_{\HS_i}, \dots, T_s|_{\HS_i}, T|_{\HS_i})$. Note that $T_1|_{\HS_i}, \dots, T_s|_{\HS_i}$ are all atoms by the induction hypothesis. We perform the canonical decomposition of $T|_{\HS_i}$ and thus $\HS_i$ is orthogonally decomposed into two parts say $\HS_{i1}, \HS_{i2}$ such that $T|_{\HS_{i1}}$ is a unitary and $T|_{\HS_{i2}}$ is a c.n.u. contraction. Once again by Theorem \ref{thm:key}, $\HS_{i1}, \HS_{i2}$ reduce each of $T_1|_{\HS_i}, \dots, T_s|_{\HS_i}$. Thus, the tuple $(T_1|_{\HS_i}, \dots, T_s|_{\HS_i}, T|_{\HS_i})$ orthogonally splits into two parts namely the tuples $A_i=(T_1|_{\HS_{i1}}, \dots, T_s|_{\HS_{i1}}, T|_{\HS_{i1}})$ and $B_i=(T_1|_{\HS_{i2}}, \dots, T_s|_{\HS_{i2}}, T|_{\HS_{i2}})$ with respect to $\HS_i=\HS_{i1}\oplus \HS_{i2}$, where $A_i,B_i$ consist of atoms only. Needless to mention that this algorithm holds for each $i=1,\dots ,2^s$. Thus, the original Hilbert space $\HS$ orthogonally decomposes into the following $2^{s+1}$ parts: $$\HS=(\HS_{11}\oplus\HS_{12})\oplus (\HS_{21}\oplus \HS_{22})\oplus \dots \oplus (\HS_{2^s\,1}\oplus \HS_{2^s\,2}),$$ each of which is a joint reducing subspace for $T_1,\dots ,T_s, T$. Consequently we have a set of $2^{s+1}$ tuples $\{ (T_1|_{\HS_{jk}}, \dots, T_s|_{\HS_{jk}}, T|_{\HS_{jk}}):\, 1 \leq j \leq 2^s \, \& \, k=1,2 \}$ and each of these tuples consists of atoms only. The other parts of the theorem follow naturally from the induction hypothesis, Theorem \ref{thm:key} and the above canonical decomposition of $T$. Hence the proof is complete.

\end{proof}

\subsection{Wold decomposition of doubly commuting isometries}
As a special case of Theorem \ref{thm:decomp2}, we obtain the following Wold decomposition of doubly commuting isometries which is a refinement of Theorem \ref{thm:Wold1} for doubly commuting isometries. Recall that $B1,B2$ stand for a pure isometry and a c.n.i. contractions respectively.

\begin{thm}\label{thm:Wold2}

Let $V_1,\dots, V_n$ be doubly commuting isometries acting on a Hilbert space $\HS$. Then there corresponds an orthogonal decomposition of $\HS$ into $2^n$ subspaces $\HS_1,\dots, \HS_{2^n}$ such that 
\begin{enumerate}
\item each $\HS_j$, $1\leq j \leq 2^n$, is a joint reducing subspace for $V_1,\dots, V_{n}$;

\item $V_i|_{\HS_{j}}$ is either a unitary or a pure isometry for all $i=1,\dots , n$ and for $j= 1,\dots , 2^n$;

\item if $\mathcal F_{n}=\{(\mathcal A_{j}\,, \, \mathcal H_{j}): 1\leq j \leq 2^n  \}$, where $\mathcal A_{j}=( V_1|_{\HS_{j}},\dots , V_n|_{\HS_{j}})$ for each $j$ and if $\mathcal M_{n}$ is the set of all functions $f:\{1, \dots , n  \} \rightarrow \{A1\,,\,B1\}$, then corresponding to every $g \in \mathcal M_n$ there exists a unique element $(\mathcal A_g, \HS_g) \in \mathcal F_n$ such that the $k\textsuperscript{th}$ component of $\mathcal A_g$ is of the type $g(k)$, for $1\leq k \leq n$;
 
\item $\HS_1$ is the maximal joint reducing subspace for $V_1, \dots , V_n$ such that $V_1|_{\HS_1}, \dots, V_n|_{\HS_1}$ are unitaries;

\item $\HS_{2^n}$ is the maximal joint reducing subspace for $V_1, \dots , V_n$ such that $V_1|_{\HS_{2^n}}, \dots, V_n|_{\HS_{2^n}}$ are pure isometries.

\end{enumerate}

One or more members of $\{ \HS_j:\;j=1,\dots ,2^n \}$ may coincide with the trivial subspace $\{0\}$.

\end{thm}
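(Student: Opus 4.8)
The plan is to deduce Theorem~\ref{thm:Wold2} directly from Theorem~\ref{thm:decomp2} by specializing to the isometric case. Since $V_1,\dots,V_n$ are doubly commuting \emph{isometries}, they are in particular doubly commuting contractions, so Theorem~\ref{thm:decomp2} applies verbatim and produces an orthogonal decomposition $\HS=\HS_1\oplus\cdots\oplus\HS_{2^n}$ into $2^n$ joint reducing subspaces on each of which every $V_i$ restricts to an \emph{atom}, i.e. either a unitary or a c.n.u.\ contraction. The entire task, therefore, is to upgrade ``c.n.u.\ contraction'' to ``pure isometry'' in the isometric setting and to verify the sharper descriptions of the extremal pieces $\HS_1$ and $\HS_{2^n}$.

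The crucial observation is that the restriction of an isometry to a reducing subspace is again an isometry. Hence, for each $i$ and each $j$, the operator $V_i|_{\HS_j}$ is an isometry that is also an atom. By the von Neumann--Wold decomposition (Theorem~\ref{Wold:decomposition}), every isometry splits into a unitary part and a pure-isometry part; an isometry that is a \emph{c.n.u.} contraction can therefore have no nontrivial unitary (hence reducing-unitary) summand, which forces it to be a pure isometry, i.e. a unilateral shift. Conversely a pure isometry is visibly a c.n.u.\ contraction. Thus, on each $\HS_j$, the type label $A2$ (c.n.u.) coincides with the label $B1$ (pure isometry), and the type label $A1$ (unitary) is unchanged. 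This immediately yields parts~(1) and~(2), and it also lets us replace the target set $\{A1,A2\}$ of the classifying functions in part~(3) by $\{A1,B1\}$ while preserving the existence-and-uniqueness statement, since the correspondence $g\in\mathcal M_n \mapsto (\mathcal A_g,\HS_g)$ is inherited unchanged from Theorem~\ref{thm:decomp2}.

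For parts~(4) and~(5) I would argue as follows. The maximality of $\HS_1$ as a joint reducing subspace on which each $V_i|_{\HS_1}$ is unitary is exactly the maximality statement from part~(4) of Theorem~\ref{thm:decomp2}, which survives the specialization intact. For $\HS_{2^n}$, part~(5) of Theorem~\ref{thm:decomp2} gives the maximal joint reducing subspace on which every $V_i$ is a c.n.u.\ contraction; by the identification in the previous paragraph this is precisely the maximal joint reducing subspace on which every $V_i|_{\HS_{2^n}}$ is a pure isometry, which is the claim of part~(5). Finally, to interpret part~(4) of the conclusion concretely via $V=\prod_{i=1}^n V_i$, I would invoke the same reasoning used in Theorem~\ref{thm:Wold1}: the product $V$ is itself an isometry whose Wold decomposition is compatible with the joint decomposition, so the maximal joint-unitary part $\HS_1$ coincides with $\cap_{n=0}^\infty V^n\HS$ and its complement is the wandering-subspace part $M_+(\mathcal L)$.

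The proof is thus essentially a matter of bookkeeping, and the only genuine point requiring care is the equivalence ``isometric atom of type $A2$ $\iff$ pure isometry.'' The main obstacle I anticipate is not a deep one but a verificational one: I must make sure the restriction-of-isometry-is-isometry fact is applied on \emph{reducing} subspaces (so that both $V_i|_{\HS_j}$ and its adjoint behave correctly) and that the passage from the $\{A1,A2\}$-labeling to the $\{A1,B1\}$-labeling in part~(3) is a genuine relabeling rather than a change in the combinatorial structure. Once these are in place the result follows at once as a corollary, so I would write the proof as a short specialization argument citing Theorem~\ref{thm:decomp2} and Theorem~\ref{Wold:decomposition}, exactly as the paper does for Theorem~\ref{thm:Wold1}.
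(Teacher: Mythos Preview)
Your proposal is correct and matches the paper's approach: the paper also skips a detailed proof, noting only that one uses the arguments of Theorem~\ref{thm:decomp2} together with the observation that for an isometry the canonical decomposition is the Wold decomposition and the c.n.u.\ part becomes a pure isometry. Your only superfluous addition is the final remark about identifying $\HS_1$ with $\bigcap_{n\geq 0} V^n\HS$ via $V=\prod_i V_i$; that description belongs to Theorem~\ref{thm:Wold1} and is not part of the statement of Theorem~\ref{thm:Wold2}, so it can simply be omitted.
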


We skip the proof. Indeed, a proof to this theorem uses similar arguments as in the proof of Theorem \ref{thm:decomp2}. Needless to mention that the canonical decomposition of a contraction $P$ is nothing but the Wold decomposition of $P$ when $P$ is an isometry. Also, the completely non-unitary part of $P$ becomes the shift operator when $P$ is an isometry.\\

\vspace{0.2cm}

\section{Canonical decomposition of a finite family of doubly commuting c.n.u. contractions} \label{sec:5}

\vspace{0.2cm}

\noindent We have witnessed an explicit canonical decomposition of doubly commuting contractions in the previous section. In this section, we show that the totally c.n.u. tuple $(T_1|_{\HS_{2^n}}, \dots , T_n|_{\HS_{2^n}})$ as in Theorem \ref{thm:decomp2}, can be further decomposed into $2^n$ orthogonal parts by applying an argument similar to that in the proof of Theorem \ref{thm:decomp2}. This is an analogue of Theorem \ref{thmlv1} due to Levan. We begin with an analogue of Theorem \ref{thmlv1} for a particular class of c.n.u. $\E$-contractions. We achieve our desired result by an application of this theorem.

\begin{thm}[\cite{B-P1}, Theorem 5.1] \label{cd1}
    Let $(A,B, P)$ be a c.n.u $\E$-contraction
    on a Hilbert space $\mathcal{H}$. Let $\mathcal{H}_1, \mathcal H_2$ be as in
    Theorem \ref{thmlv1}. If either $P^*$ commutes with $A,B$ or $\mathcal{H}_1$
    is the maximal invariant subspace for $P$ on which $P$ is isometry, then
    \begin{enumerate}
        \item $\mathcal{H}_1$, $\mathcal{H}_2$ reduce $A,B$;
        \item $\left(A|_{\mathcal{H}_1}, B|_{\mathcal{H}_1},P|_{\mathcal{H}_1} \right)$
        is a pure $\E$-isometry;
        \item $\left(A|_{\mathcal{H}_2},B|_{\mathcal{H}_2},P|_{\mathcal{H}_2} \right)$
        is a c.n.i $\E$-contraction.
    \end{enumerate}
    Anyone of $\mathcal{H}_1,\mathcal{H}_2$ can be equal to the trivial subspace $\{0\}$.
\end{thm}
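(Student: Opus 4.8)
The plan is to mirror the block-matrix argument used in the proof of Theorem \ref{thm:E-decomp1}, replacing the canonical (unitary) decomposition of $P$ there by its Levan decomposition from Theorem \ref{thmlv1}. By Theorem \ref{thmlv1} the subspaces $\mathcal H_1,\mathcal H_2$ reduce $P$, so $P=P_1\oplus P_2$ with $P_1=P|_{\mathcal H_1}$ a pure isometry and $P_2=P|_{\mathcal H_2}$ a c.n.i. contraction. Since $P_1$ is an isometry one has $\mathcal D_{P_1}=\{0\}$, so $\mathcal D_P=\{0\}\oplus\mathcal D_{P_2}$ and the fundamental operators $F_1,F_2$ of Theorem \ref{exist-tetra} take the form $F_i=0\oplus F_{i22}$. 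Writing
\[
A=\begin{bmatrix}A_{11}&A_{12}\\A_{21}&A_{22}\end{bmatrix},\qquad B=\begin{bmatrix}B_{11}&B_{12}\\B_{21}&B_{22}\end{bmatrix}
\]
with respect to $\mathcal H=\mathcal H_1\oplus\mathcal H_2$, I would read off from the fundamental equations $A-B^*P=D_PF_1D_P$ and $B-A^*P=D_PF_2D_P$ the block identities $A_{21}=B_{12}^*P_1$, $A_{12}=B_{21}^*P_2$, $B_{12}=A_{21}^*P_2$ and $B_{21}=A_{12}^*P_1$, exactly as in \eqref{eqn:1}--\eqref{eqn:4}, together with the relations $A_{21}P_1=P_2A_{21}$ and $B_{21}P_1=P_2B_{21}$ coming from $AP=PA$ and $BP=PB$.

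The heart of the argument, and the step I expect to be the main obstacle, is to prove that the off-diagonal blocks vanish, i.e. $A_{12}=A_{21}=B_{12}=B_{21}=0$; this forces $\mathcal H_1,\mathcal H_2$ to reduce $A,B$ and gives part (1). Combining $A_{21}=B_{12}^*P_1$ with $B_{12}=A_{21}^*P_2$ yields $A_{21}=P_2^*A_{21}P_1$, and substituting $A_{21}P_1=P_2A_{21}$ gives $A_{21}=P_2^*P_2A_{21}$, whence $D_{P_2}^2A_{21}=0$ and so $D_{P_2}A_{21}=0$. Iterating $A_{21}P_1^k=P_2^kA_{21}$ then produces $D_{P_2}P_2^kA_{21}=D_{P_2}A_{21}P_1^k=0$ for every $k\geq0$, so that $\overline{\text{Ran}}\,A_{21}$ is an \emph{invariant} subspace of $P_2$ on which $P_2$ acts isometrically. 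The difficulty, in contrast with the unitary case, is that a c.n.i. contraction may possess nonzero invariant (though never reducing) isometric subspaces; this is precisely where the two alternative hypotheses enter.

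Under the first hypothesis that $P^*$ commutes with $A$ and $B$, the relation $P^*A=AP^*$ supplies in addition the adjoint intertwining $P_2^*A_{21}=A_{21}P_1^*$, so that $\overline{\text{Ran}}\,A_{21}$ is in fact \emph{reducing} for $P_2$; since $P_2$ is isometric there while $P_2$ is c.n.i., this subspace must be trivial, forcing $A_{21}=0$. Under the second hypothesis that $\mathcal H_1$ is the maximal invariant subspace on which $P$ is isometric, I would argue instead that $\overline{\text{Ran}}\,A_{21}\subseteq\mathcal H_2$ is $P$-invariant (as $\mathcal H_2$ reduces $P$) and $P$ is isometric on it, so by maximality it is contained in $\mathcal H_1=\mathcal H_2^{\perp}$, hence in $\{0\}$, again giving $A_{21}=0$. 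The identical computation applied to $B_{21}$ (using $B_{21}=A_{12}^*P_1$ and $A_{12}=B_{21}^*P_2$) gives $B_{21}=0$, and then $A_{12}=B_{21}^*P_2=0$ and $B_{12}=A_{21}^*P_2=0$. Thus $A$ and $B$ are block-diagonal, establishing part (1).

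Finally I would record parts (2) and (3) as in Theorem \ref{thm:E-decomp1}: the triple $(A|_{\mathcal H_1},B|_{\mathcal H_1},P_1)$ is the restriction of the $\E$-contraction $(A,B,P)$ to the common reducing subspace $\mathcal H_1$, hence is an $\E$-contraction, and since $P_1$ is an isometry Theorem \ref{thm:ti} makes it an $\E$-isometry, which is \emph{pure} by Definition \ref{def3}(iv) because $P_1$ is a pure isometry; likewise $(A|_{\mathcal H_2},B|_{\mathcal H_2},P_2)$ is an $\E$-contraction with $P_2$ a c.n.i. contraction, hence a c.n.i. $\E$-contraction by Definition \ref{def3}(ii). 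The possibility that $\mathcal H_1$ or $\mathcal H_2$ is trivial is immediate. The crux remains the vanishing of the off-diagonal blocks, and in particular the fact that a c.n.i. contraction can carry invariant isometric subspaces, which is exactly why one of the two extra hypotheses is indispensable.
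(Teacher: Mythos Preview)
Your proposal is correct and is essentially the same argument as the one in \cite{B-P1}: the paper does not prove Theorem \ref{cd1} in the body but cites it, and explicitly states that its own proof of Theorem \ref{thm:E-decomp1} is obtained ``applying similar techniques as in the proof of Theorem 5.1 of \cite{B-P1}'', which is precisely the block-matrix/fundamental-operators computation you reproduce. Your identification of the crucial difference from the unitary case---that a c.n.i.\ contraction may have nonzero invariant (though not reducing) isometric subspaces, so one of the two extra hypotheses is needed to force $A_{21}=B_{21}=0$---is exactly the point.
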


It is merely mentioned that Theorem \ref{thm:key} plays the central role in determining the refined canonical decomposition of doubly commuting contractions as in Theorem \ref{thm:decomp2}. For our purpose of having further decomposition of doubly commuting c.n.u. contractions into doubly commuting pure isometries and c.n.i. contractions, we shall need an analogue of Theorem \ref{thm:key} in this setting and we present it below.

\begin{thm}\label{thm:key1}
Let $P,Q$ be doubly commuting contractions acting on a Hilbert space $\HS$ and let $Q$ be a c.n.u. contraction. If $Q=Q_1\oplus Q_2$ is the orthogonal decomposition of $Q$ as in Theorem \ref{thmlv1} with respect to the orthogonal decomposition $\HS=\HS_1 \oplus \HS_2$, then $\HS_1,\HS_2$ are reducing subspaces for $P$.
\end{thm}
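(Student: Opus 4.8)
The plan is to mirror the proof of Theorem~\ref{thm:key}. First I would form the two commuting triples
\[
\left(\tfrac{P+P^*Q}{2},\ \tfrac{P+P^*Q}{2},\ Q\right)
\quad\text{and}\quad
\left(\tfrac{P-P^*Q}{2},\ \tfrac{P-P^*Q}{2},\ -Q\right).
\]
Since $P,Q$ doubly commute, the identity $PD_Q=D_QP$ is obtained exactly as in the proof of Theorem~\ref{thm:key}, so by Theorem~\ref{thm:E-contraction} both triples are $\E$-contractions (with fundamental operators $F_1=F_2=\tfrac12 P|_{\mathcal D_Q}$). As $Q$, and hence $-Q$, is a c.n.u.\ contraction, these are c.n.u.\ $\E$-contractions; the natural next move is to feed them into the Levan-type decomposition for c.n.u.\ $\E$-contractions, Theorem~\ref{cd1}, to force the Levan subspaces $\HS_1,\HS_2$ of $Q$ to reduce $\tfrac{P\pm P^*Q}{2}$, and therefore to reduce their sum $P$.

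The hard part is discharging the hypothesis of Theorem~\ref{cd1}. Its first alternative---that $Q^*$ commute with $\tfrac{P+P^*Q}{2}$---is too strong: double commutativity collapses the commutator to $\tfrac12 P^*(Q^*Q-QQ^*)$, which need not vanish unless $Q$ is normal. One is thus thrown onto the second alternative, that $\HS_1$ be the \emph{maximal invariant} subspace on which $Q$ is isometric. This is the genuine obstacle, since for a c.n.u.\ contraction the reducing pure-isometry part $\HS_1$ may be strictly smaller than the maximal invariant isometric subspace; indeed a one-sided weighted shift with a single defective weight is isometric on a large invariant---yet non-reducing---subspace while possessing no nontrivial reducing isometric subspace whatsoever.

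To sidestep this I would argue directly, routing the double commutativity through a spectral object that $P$ must respect. The operators $Q^{*n}Q^n$ decrease, so their strong operator limit $A_\infty$ exists, and the maximal invariant isometric subspace is $\mathcal M=\ker(I-A_\infty)$. Double commutativity yields $PQ^{*n}Q^n=Q^{*n}P Q^n=Q^{*n}Q^nP$, and the same computation for $P^*$, so both $P$ and $P^*$ commute with $A_\infty$; hence $\mathcal M$ reduces $P$. On $\mathcal M$ the operator $Q$ is isometric, and any unitary part of $Q|_{\mathcal M}$ would be a reducing unitary part of $Q$ on all of $\HS$---impossible, as $Q$ is c.n.u.---so $Q|_{\mathcal M}$ is a pure isometry.

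It remains to promote the reduction from $\mathcal M$ down to $\HS_1$. I would identify $\HS_1$, Levan's pure-isometry part, as the largest $Q$-reducing subspace contained in $\mathcal M$ (a reducing subspace on which $Q$ is isometric is precisely a reducing subspace inside $\mathcal M$, and the join of all such is reducing and isometric, hence purely isometric by c.n.u.). For $h\in\HS_1$ and any word $w(Q,Q^*)$ one has $w(Q,Q^*)(Ph)=P\,w(Q,Q^*)h$, because $P$ commutes with both $Q$ and $Q^*$; since $\HS_1$ is invariant under $Q$ and $Q^*$ and $P\mathcal M\subseteq\mathcal M$, each such vector lies in $\mathcal M$. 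Thus the smallest $Q$-reducing subspace containing $Ph$ is contained in $\mathcal M$, forcing $Ph\in\HS_1$; running the same argument for $P^*$ gives $P^*\HS_1\subseteq\HS_1$. Hence $\HS_1$ reduces $P$, and with it $\HS_2=\HS\ominus\HS_1$, which completes the proof.
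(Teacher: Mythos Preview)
Your proof is correct and, in fact, more careful than the paper's own argument at the decisive step. The paper proceeds exactly along the lines you sketch first: it forms the same two triples, checks they are c.n.u.\ $\E$-contractions, and then invokes Theorem~\ref{cd1} via its \emph{first} alternative, simply asserting that double commutativity of $P$ and $Q$ forces $Q^*$ to commute with $\tfrac{P+P^*Q}{2}$ (and $-Q^*$ with $\tfrac{P-P^*Q}{2}$), after which the Levan subspaces reduce the two operators and hence their sum $P$. Your computation that the relevant commutator collapses to $\tfrac12 P^*[Q,Q^*]$---which need not vanish for a non-normal $Q$---is accurate, and the paper offers no further justification there.

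Your workaround via the asymptotic operator $A_\infty=\mathrm{SOT}\text{-}\lim_n Q^{*n}Q^n$ is a genuinely different and more elementary route: it bypasses the $\E$-contraction machinery and Theorem~\ref{cd1} altogether. The identification of Levan's subspace $\HS_1$ as the largest $Q$-reducing subspace contained in $\mathcal M=\ker(I-A_\infty)$ is correct, and the final step---showing $Ph\in\HS_1$ by observing that the smallest $Q$-reducing subspace containing $Ph$ lies inside $\mathcal M$---is a clean use of double commutativity together with that maximality. (The side remark that $Q|_{\mathcal M}$ is a pure isometry is true but not needed; only the maximality characterisation of $\HS_1$ enters.) What your approach buys is a self-contained argument that does not depend on either alternative hypothesis of Theorem~\ref{cd1} being satisfied, at the cost of abandoning the tetrablock framework the paper is built around.
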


\begin{proof}

We apply the same argument as in the proof of Theorem \ref{thm:key} to the doubly commuting contractions $P,Q$ and have that $\left( \dfrac{P+P^*Q}{2}, \dfrac{P+P^*Q}{2}, Q \right)$ and $\left( \dfrac{P-P^*Q}{2}, \dfrac{P-P^*Q}{2}, -Q \right)$ are c.n.u. $\E$-contractions. Since $P,Q$ doubly commute, it follows that $ \dfrac{P+P^*Q}{2}$ commutes with $Q^*$ and similarly $\dfrac{P-P^*Q}{2}$ commutes with $-Q^*$. Now if $Q=Q_1\oplus Q_2$ with respect to $\HS=\HS_1\oplus \HS_2$ is the decomposition of $Q$ into a pure isometry $Q_1$ and a c.n.i. contraction $Q_2$ as in Theorem \ref{thmlv1}, then by Theorem \ref{cd1} we have that $\HS_1,\HS_2$ are reducing subspaces for $\dfrac{P+P^*Q}{2}$ and $\dfrac{P-P^*Q}{2}$ which leads to the conclusion that $\HS_1, \HS_2$ are reducing subspaces for $\dfrac{P+P^*Q}{2}+\dfrac{P-P^*Q}{2}=P$. Hence the proof is complete.

\end{proof}

Now we present the main result of this section which provides an explicit orthogonal decomposition of a finitely many doubly commuting c.n.u. contractions. We shall use the following terminologies.

\begin{defn}
A contraction $T$ acting on a Hilbert space $\HS$ is said to be
\begin{enumerate}
\item[(i)] a \textit{fundamental c.n.u. contraction} if either $T$ is a pure isometry or a c.n.i. contraction;

\item[(ii)] a \textit{fundamental c.n.u. contraction} of type $B1$ if $T$ is a pure isometry; a \textit{fundamental c.n.u. contraction} of type $B2$ if $T$ is a c.n.i. contraction;
 
\item[(iii)] a \textit{non-fundamental c.n.u.} contraction, if $T$ is a c.n.u. contraction which is neither a unilateral shift nor a c.n.i. contraction.

\end{enumerate}

\end{defn}

\begin{thm}\label{thm:decomp3}
	
Let $T_1,\dots, T_n$ be doubly commuting c.n.u. contractions acting on a Hilbert space $\HS$. Then there corresponds a decomposition of $\HS$ into an orthogonal sum of $2^n$ subspaces $\HS_1,\dots, \HS_{2^n}$ of $\HS$ such that 
\begin{enumerate}

\item each $\HS_j$, $1\leq j \leq 2^n$, is a joint reducing subspace for $T_1,\dots, T_{n}$;

\item $T_i|_{\HS_{j}}$ is either a pure isometry or a c.n.i. contraction for all $i=1,\dots , n$ and for $j= 1,\dots , 2^n$;

\item if $\mathcal F_{n}=\{(\mathcal A_{j}\,, \, \mathcal H_{j}): 1\leq j \leq 2^n  \}$, where $\mathcal A_{j}=( T_1|_{\HS_{j}},\dots , T_n|_{\HS_{j}})$ for each $j$ and if $\mathcal M_{n}$ is the set of all functions $f:\{1, \dots , n  \} \rightarrow \{B1\,,\,B2\}$, then corresponding to every $g \in \mathcal M_n$ there exists a unique element $(\mathcal A_g, \HS_g) \in \mathcal F_n$ such that the $k\textsuperscript{th}$ component of $\mathcal A_g$ is of the type $g(k)$, for $1\leq k \leq n$;
 
\item $\HS_1$ is the maximal joint reducing subspace for $T_1, \dots , T_n$ such that $T_1|_{\HS_1}, \dots, T_n|_{\HS_1}$ are pure isometries;

\item $\HS_{2^n}$ is the maximal joint reducing subspace for $T_1, \dots , T_n$ such that $T_1|_{\HS_{2^n}}, \dots, T_n|_{\HS_{2^n}}$ are c.n.i. contractions.

\end{enumerate}

One or more members of $\{ \HS_j:\;j=1,\dots ,2^n \}$ may coincide with the trivial subspace $\{0\}$.

\end{thm}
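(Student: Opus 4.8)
The plan is to prove Theorem \ref{thm:decomp3} by mathematical induction on $n$, mirroring the structure of the proof of Theorem \ref{thm:decomp2}, but replacing the canonical decomposition of a contraction (Theorem \ref{thm:CD}) with Levan's refinement (Theorem \ref{thmlv1}), and replacing the key structural tool Theorem \ref{thm:key} with its c.n.u. analogue Theorem \ref{thm:key1}. For $n=1$ the statement is exactly Theorem \ref{thmlv1}: a single c.n.u. contraction $T_1$ splits uniquely as $\HS = \HS_1 \oplus \HS_2$ with $T_1|_{\HS_1}$ a pure isometry and $T_1|_{\HS_2}$ a c.n.i. contraction, so the base case holds with $2^1 = 2$ pieces.

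For the inductive step, I would assume the result for $n = s$ doubly commuting c.n.u. contractions and take doubly commuting c.n.u. contractions $T_1, \dots, T_s, T$. Applying the induction hypothesis to $T_1, \dots, T_s$ decomposes $\HS$ into $2^s$ joint reducing subspaces $\HS_1, \dots, \HS_{2^s}$ on each of which every $T_i$ restricts to a fundamental c.n.u. contraction (a pure isometry or a c.n.i. contraction). Since $T$ doubly commutes with each $T_i$, the crucial point is that each $\HS_k$ reduces $T$ as well; this follows from repeated application of Theorem \ref{thm:key1}, exactly as Theorem \ref{thm:key} was used in Theorem \ref{thm:decomp2}, because the decomposition producing the $\HS_k$ was built from successive Levan decompositions of the $T_i$'s, each doubly commuting with $T$. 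On each $\HS_k$ I then perform the Levan decomposition (Theorem \ref{thmlv1}) of $T|_{\HS_k}$, splitting $\HS_k = \HS_{k1} \oplus \HS_{k2}$ with $T|_{\HS_{k1}}$ a pure isometry and $T|_{\HS_{k2}}$ a c.n.i. contraction. By Theorem \ref{thm:key1} once more, each of $\HS_{k1}, \HS_{k2}$ reduces all of $T_1|_{\HS_k}, \dots, T_s|_{\HS_k}$, so on each of these $2^{s+1}$ pieces every one of $T_1, \dots, T_s, T$ restricts to a fundamental c.n.u. contraction.

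This yields the orthogonal decomposition $\HS = \bigoplus_{k=1}^{2^s}(\HS_{k1} \oplus \HS_{k2})$ into $2^{s+1}$ joint reducing subspaces, establishing parts (1) and (2). For part (3), the bijective labeling by $g \in \mathcal M_n$ follows by tracking types through the induction: the induction hypothesis provides a unique piece realizing each type-assignment for the first $s$ operators, and the Levan split of $T$ refines each into a $B1$-piece and a $B2$-piece, giving a unique piece for each of the $2^{s+1}$ functions $g:\{1,\dots,s+1\}\to\{B1,B2\}$. Parts (4) and (5) identify the extreme pieces: $\HS_1$ is the one where every type is $B1$ (all pure isometries) and $\HS_{2^n}$ the one where every type is $B2$ (all c.n.i. contractions), and their maximality follows from the maximality clauses in Theorem \ref{thmlv1} applied along the induction together with the fact that a joint reducing subspace on which each operator is a pure isometry (respectively c.n.i.) must lie inside the corresponding extreme piece at every stage.

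The main obstacle I anticipate is the repeated invocation of Theorem \ref{thm:key1} at each level of the induction: I must verify carefully that the doubly commuting hypothesis is preserved under restriction to the reducing subspaces produced at earlier stages, so that Theorem \ref{thm:key1} genuinely applies to each restricted pair. In particular, one needs that if $P, Q$ doubly commute and $\HS = \HS_1 \oplus \HS_2$ is a decomposition reducing both, then the restrictions $P|_{\HS_i}, Q|_{\HS_i}$ still doubly commute and remain c.n.u. contractions; the first is immediate from compression, while the second requires that restriction of a c.n.u. contraction to a reducing subspace is again c.n.u. (a reducing subspace carrying a unitary part would contradict complete non-unitarity of the whole). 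Beyond this bookkeeping, the argument is a faithful transcription of the Theorem \ref{thm:decomp2} proof with the three substitutions noted above, so I would present the $n=2$ case explicitly to expose the branching and then state the general inductive step concisely.
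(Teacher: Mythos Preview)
Your proposal is correct and follows essentially the same approach as the paper: the paper's proof is a single sentence stating that one follows the algorithm of Theorem~\ref{thm:decomp2} while applying Theorem~\ref{thm:key1} repeatedly in place of Theorem~\ref{thm:key}, which is precisely the substitution you describe. Your write-up is in fact considerably more detailed than the paper's, including the explicit base case, the inductive step, and the bookkeeping about preservation of double commutativity and the c.n.u.\ property under restriction to reducing subspaces.
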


\begin{proof}

We follow the same algorithm as in the proof of Theorem \ref{thm:decomp2} and apply Theorem \ref{thm:key1} repeatedly to reach the desired conclusion.

\end{proof}

\section{Canonical decomposition of an infinite family of doubly commuting contractions } \label{sec:6}

\vspace{0.4cm}

\noindent In this Section, we generalize the ideas of the previous sections and extend the same algorithm to decompose infinitely many doubly commuting contractions. Before presenting the general results for an arbitrary infinite set of doubly commuting contractions, we obtain the results for a countably infinite family so that a reader can understand clearly and intuitively how the same idea is being generalized from the finite to infinite case. It is evident from Theorem \ref{thm:key} that if we have a system of doubly commuting contractions $\mathcal S =(T_{\alpha})_{\alpha \in \Lambda}$ acting on a Hilbert space $\HS$ and if $T_{\beta 1}\oplus T_{\beta 2}$ is the canonical decomposition of some $T_{\beta} \in \mathcal S$ with respect to $\HS=\HS_1 \oplus \HS_2$, then $\HS_1 , \HS_2$ are reducing subspaces for all members of $\mathcal S$. Consequently the entire system $\mathcal S$ is orthogonally decomposed according to the canonical decomposition of $T_{\beta}$. The contraction $T_{\beta}$, with respect to whose canonical decomposition the whole system $\mathcal S$ is orthogonally decomposed, will be called the \textit{``center"} for the decomposition. Note that for an operator tuple $\mathcal S= (T_{\alpha})_{ \alpha \in \Lambda }$ acting on $\HS$, if $\widetilde{\HS} \subseteq \HS$ is a joint invariant subspace for $\mathcal S$, then by $\mathcal S|_{\widetilde{\HS}}$ we shall always mean the tuple $(T_{\alpha}|_{\widetilde{\HS}})_{\alpha \in \Lambda}$. Also, if $\HS=\widetilde{\HS_1} \oplus \widetilde{\HS_2}$ and if $\widetilde{\HS_1}, \widetilde{\HS_2}$ are joint reducing subspaces for $\mathcal S$, then $\mathcal S|_{\widetilde{\HS_1}} \oplus \mathcal S|_{\widetilde{\HS_2}}$ denotes the tuple $(T_{\alpha}|_{\widetilde{\HS_1}} \oplus T_{\alpha}|_{\widetilde{\HS_2}})_{\alpha \in \Lambda}$.  Thus, if $\HS=\oplus_{\beta \in J}\; \HS_{\beta}$ and if $\HS_{\beta}$ is a joint reducing subspace for $\mathcal S$ for each $\beta \in J$, then $\oplus_{\beta \in J}\; \mathcal S|_{\HS_{\beta}}$ denotes the tuple $(\oplus_{\beta \in J}\, T_{\alpha}|_{\HS_{\beta}})_{\alpha \in \Lambda}$.

\subsection{The countably infinite case.}

  Let us consider a countably infinite tuple of doubly commuting contractions $\mathcal{T}=\{ T_{n}:\, n\in \mathbb N \}$, where each $T_{n}$ acts on a Hilbert space $\HS$. For our convenience we shall denote this set by the sequence $\mathcal{T}=(T_{n})_{ n=1} ^{\infty}$, whose terms are all distinct. If there are only finitely many non-atoms in $\mathcal{T}$, then the orthogonal decomposition performed by holding the non-atoms as centers will terminate after a finitely many steps which is similar to the finite case described in the previous sections. Also, the canonical decomposition of an atom is trivial and consequently holding it as the center the orthogonal decomposition of the concerned tuple is also trivial. Thus, without loss of generality we assume that there are infinitely many non-atoms in the tuple $\mathcal{T}$. We shall decompose $\mathcal{T}$ step by step starting with $T_1$ being the center. Suppose $T_{11}\oplus T_{12}$ is the canonical decomposition of $T_1$ with respect to $\HS={\HS}_1 \oplus {\HS}_2$, where $T_{11}=T_1|_{{\HS}_1}$ is a unitary and $T_{12}=T_1|_{{\HS}_2}$ is a c.n.u. contraction. Then by Theorem \ref{thm:key}, ${\HS}_1 , {\HS}_2$ are joint reducing subspaces for $\mathcal{T}$ and consequently $\mathcal{T}$ orthogonally decomposes into two sequences, say $\mathcal{T}_1=\{T_{11}, T_{21}, T_{31}, \dots \}=(T_{n1})_{ n=1} ^{\infty}$ and $\mathcal{T}_2=\{ T_{12}, T_{22}, T_{32}, \dots \} = (T_{n2})_{ n=1} ^{\infty}$, where $T_{n1}=T_n|_{{\HS}_1}$ and $T_{n2}=T_n|_{{\HS}_2}$ for all $n\in\mathbb N$. Note that $T_{11}$ in $\mathcal{T}_1$ and $T_{12}$ in $\mathcal{T}_2$, which are the first components of $\mathcal{T}_1$ and $\mathcal{T}_2$ respectively, are atoms. Let
  \[
   \mathcal F_0=\left\{\left( \mathcal T, \HS \right)  \right\}     \text{ and } \mathcal F_1= \left\{ \left( \mathcal T_1, {\HS}_1 \right), \left( \mathcal T_2, {\HS}_2 \right)  \right\}.
   \]
    Next, we move to consider the $2\textsuperscript{nd}$ components as centers for the $2$ newly obtained operator tuples in $\mathcal F_1$. So, we shall decompose $\mathcal{T}_1$ and $\mathcal{T}_2$ holding $T_{21}$ and $T_{22}$ respectively as the centers and obtain $2^2=4$ new tuples acting on $4$ orthogonal parts of $\HS$, say ${\HS}_{11}, {\HS}_{12}, {\HS}_{21}$ and ${\HS}_{22}$ respectively, where $\HS_1 =\HS_{11}\oplus \HS_{12}$ and $\HS_2=\HS_{21} \oplus \HS_{22}$. Let us denote them by 
\begin{align*}
\mathcal{T}_{11} & = \{T_{111}, T_{211}, T_{311}, \dots  \}=(T_{n11})_{ n=1} ^{\infty} \\
\mathcal{T}_{12} & = \{T_{112}, T_{212}, T_{312}, \dots  \}=(T_{n12})_{ n=1} ^{\infty} \\
\mathcal{T}_{21} & = \{T_{121}, T_{221}, T_{321}, \dots  \}=(T_{n21})_{ n=1} ^{\infty} \\
\mathcal{T}_{22} & = \{T_{122}, T_{222}, T_{322}, \dots  \}=(T_{n22})_{ n=1} ^{\infty}
\end{align*}    
 respectively. Let $$\mathcal F_2 =\{(\mathcal T_{11}, {\HS}_{11}), (\mathcal T_{12}, {\HS}_{12}), (\mathcal T_{21}, {\HS}_{21}), (\mathcal T_{22}, {\HS}_{22})   \}.$$ Note that the first two components of each tuple in $\mathcal F_2$ are atoms. Continuing this process we see that at $n \textsuperscript{th}$ stage we obtain $2^n$ operator tuples (along with the corresponding $2^n$ orthogonal splits of $\HS$) in $\mathcal F_n$ each of whose first $n$ components are atoms. If we denote by $\HS_1^n, \dots, \HS_{2^n}^n$, the orthogonal splits of $\HS$ which are obtained at the $n \textsuperscript{th}$ stage, then $\HS=\oplus_{i=1}^{2^n}\; \HS_i^n$ and this holds for all $n \in \mathbb N$. Let
 \[
 \mathcal F_{\mathbb N}= \cup_{n \in \mathbb N \cup \{ 0 \}}\, \mathcal F_n\,.
 \]
 We define a relation `$\leq $' on $\mathcal F_{\mathbb N}$ in the following way: for any two members $(\mathcal A, \mathcal H_A), (\mathcal B , \mathcal H_B)$ in $\mathcal F_{\mathbb N}$, $(\mathcal A, \mathcal H_{\mathcal A})\leq (\mathcal B , \mathcal H_{\mathcal B})$ if and only if $\mathcal H_{\mathcal B} \subseteq \mathcal H_{\mathcal A}$, $\mathcal H_{\mathcal B}$ is a joint reducing subspace for $\mathcal A$ and $\mathcal B= \mathcal A|_{\mathcal H_{\mathcal B}}$. Evidently $(\mathcal F_{\mathbb N}, \leq)$ is a partially ordered set and that $(\mathcal T, \HS) \leq (\mathcal A , \mathcal H_{\mathcal A})$ for any $(\mathcal A , \mathcal H_{\mathcal A}) \in \mathcal F_{\mathbb N}$.

\begin{defn}
A proper subset $\mathcal G $ of $\mathcal F_{\mathbb N}$ is called a \textit{maximal totally ordered set} if $(\mathcal G, \leq)$ is a totally ordered subset of $(\mathcal F_{\mathbb N}, \leq)$ and for any $(\mathcal A , \mathcal H_{\mathcal A}) \in \mathcal F_{\mathbb N} \setminus \mathcal G$, $\mathcal G \cup \{ (\mathcal A , \mathcal H_{\mathcal A}) \}$ is not a totally ordered subset of $\mathcal F_{\mathbb N}$.
\end{defn}
Note that every maximal totally ordered set contains the initial pair $(\mathcal T, \HS)$. Also, in a maximal totally ordered set, we can arrange and express the elements as an increasing chain. For examples, the following two subsets of $\mathcal F_{\mathbb N}$ are maximal totally ordered sets (with respect to the notations described above) where the elements are arranged in increasing order:
\begin{align*}
& \left\{ \big( (T_n)_{n=1}^{\infty}, \HS \big), \left( (T_{n1})_{n=1}^{\infty}, {\HS}_{1} \right), \left( (T_{n11})_{n=1}^{\infty}, {\HS}_{11} \right), \left( (T_{n111})_{n=1}^{\infty}, {\HS}_{111} \right), \dots   \right \} \\
& \left\{ \big( (T_n)_{n=1}^{\infty}, \HS \big), \left( (T_{n2})_{n=1}^{\infty}, {\HS}_2 \right), \left( (T_{n21})_{n=1}^{\infty}, {\HS}_{21} \right), \left( (T_{n211})_{n=1}^{\infty}, {\HS}_{211} \right), \dots   \right \}
\end{align*}

\begin{rem}
Note that a maximal totally ordered set in $\mathcal F_{\mathbb N}$ contains exactly one element from each $\mathcal F_n$, $( n=0,1,2,\dots )$. 
\end{rem}

\begin{defn}
Two maximal totally ordered subsets $\mathcal G_1$ and $\mathcal G_2$ in $\mathcal F_{\mathbb N}$, whose elements are arranged in increasing order in the following way
\begin{align*}
& \mathcal G_1=\left\{ \big( (T_n)_{n=1}^{\infty}, \HS \big), \left( (T_{\alpha_n 1})_{n=1}^{\infty}, \widetilde{\HS}_{\alpha_1} \right), \left( (T_{\alpha_n2})_{n=1}^{\infty}, \widetilde{\HS}_{\alpha_2} \right), \left( (T_{\alpha_n3})_{n=1}^{\infty}, \widetilde{\HS}_{\alpha_3} \right), \dots   \right \} \\
& \mathcal G_2=\left\{ \big( (T_n)_{n=1}^{\infty}, \HS \big), \left( (T_{\beta_n 1})_{n=1}^{\infty}, \widetilde{\HS}_{\beta_1} \right), \left( (T_{\beta_n2})_{n=1}^{\infty}, \widetilde{\HS}_{\beta_2} \right), \left( (T_{\beta_n3})_{n=1}^{\infty}, \widetilde{\HS}_{\beta_3} \right), \dots   \right \}
\end{align*}
are called \textit{different} if there is $k\in \mathbb N$ such that $\left( (T_{\alpha_n k})_{n=1}^{\infty}, \widetilde{\HS}_{\alpha_k} \right) \neq \left( (T_{\beta_n k})_{n=1}^{\infty}, \widetilde{\HS}_{\beta_k} \right)$.
\end{defn}

Note that if $\mathcal G_1$ and $\mathcal G_2$ differ at $k \textsuperscript{th}$ position, then $\left( (T_{\alpha_n s})_{n=1}^{\infty}, \widetilde{\HS}_{\alpha_s} \right) \neq \left( (T_{\beta_n s})_{n=1}^{\infty}, \widetilde{\HS}_{\beta_s} \right)$ for any $s\geq k$.

\begin{defn}
Let $\mathcal G = \{(\mathcal A_n, \widetilde{\HS}_n)\,:\,n \in \mathbb N \cup \{0 \} \}$ be a maximal totally ordered set such that $(\mathcal A_n, \widetilde{\HS}_n) \leq (\mathcal A_{n+1}, \widetilde{\HS}_{n+1})$ for all $n\in \mathbb N \cup \{ 0 \}$. If $\widetilde{\HS}_{\mathcal G}=\cap_{n=0}^{\infty}\, \widetilde{\HS}_n$ and $\mathcal A_{\mathcal G}= \mathcal A_0|_{\widetilde{\HS}_{\mathcal G}}=\mathcal T|_{\widetilde{\HS}_{\mathcal G}}$, then the pair $(\mathcal A_{\mathcal G} , \widetilde{\HS}_{\mathcal G})$ is called the \textit{maximal element} for $\mathcal G$ or the \textit{limit} of the increasing sequence $(\mathcal A_n , \widetilde{\HS}_n)_{n=1}^{\infty}$ .
\end{defn}

\begin{lem}\label{lem:F1}
The operator tuple in a maximal element consists of atoms only. 
\end{lem}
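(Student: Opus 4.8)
The plan is to unwind the definition of the maximal element and reduce the claim to two elementary facts handled one index at a time: that the limit subspace $\widetilde{\HS}_{\mathcal G}$ still reduces every operator of the family, and that the atom property, once secured at a finite stage, is inherited by restriction to a reducing subspace. Concretely, I would fix a maximal totally ordered set $\mathcal G = \{(\mathcal A_n, \widetilde{\HS}_n): n \in \mathbb N \cup \{0\}\}$ arranged as an increasing chain, so that $\HS = \widetilde{\HS}_0 \supseteq \widetilde{\HS}_1 \supseteq \widetilde{\HS}_2 \supseteq \cdots$, each $\mathcal A_n = \mathcal T|_{\widetilde{\HS}_n}$, and $\widetilde{\HS}_{\mathcal G} = \cap_{n=0}^{\infty} \widetilde{\HS}_n$ with $\mathcal A_{\mathcal G} = \mathcal T|_{\widetilde{\HS}_{\mathcal G}} = (T_k|_{\widetilde{\HS}_{\mathcal G}})_{k=1}^{\infty}$. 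The goal is then to show that $T_k|_{\widetilde{\HS}_{\mathcal G}}$ is an atom for every $k$.

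The first key step is to observe that every $\widetilde{\HS}_n$ is a joint reducing subspace for the whole family $\mathcal T$. Indeed, each $\widetilde{\HS}_{n+1}$ is produced from $\widetilde{\HS}_n$ by performing the canonical decomposition of a single center, and Theorem \ref{thm:key} guarantees that the two pieces of such a canonical decomposition reduce all of the doubly commuting members of the family; iterating this starting from $\widetilde{\HS}_0 = \HS$ shows that each $\widetilde{\HS}_n$ reduces every $T_k$. Since an intersection of reducing subspaces is again a reducing subspace, $\widetilde{\HS}_{\mathcal G}$ reduces every $T_k$ as well, and in particular $\widetilde{\HS}_{\mathcal G}$ reduces the restriction $T_k|_{\widetilde{\HS}_k}$ for each $k$ (as $\widetilde{\HS}_{\mathcal G} \subseteq \widetilde{\HS}_k$).

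The second key step handles a fixed index $k$. By the construction of $\mathcal F_k$, at the $k$-th stage the first $k$ components have already been turned into atoms, so $T_k|_{\widetilde{\HS}_k}$ is an atom, i.e. either a unitary or a c.n.u. contraction. I would then invoke the hereditary nature of the atom property under restriction to reducing subspaces: the restriction of a unitary to a reducing subspace is again unitary, while if $T_k|_{\widetilde{\HS}_k}$ is c.n.u. then any nonzero reducing subspace of $T_k|_{\widetilde{\HS}_{\mathcal G}}$ on which it acted as a unitary would itself be a nonzero unitary reducing subspace of $T_k|_{\widetilde{\HS}_k}$, contradicting c.n.u.-ness. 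Either way $T_k|_{\widetilde{\HS}_{\mathcal G}}$ is an atom, and since $k$ was arbitrary, $\mathcal A_{\mathcal G}$ consists of atoms only.

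The conceptual heart of the argument, and the point I would flag as needing care, is recognizing that no genuine limiting analysis is required for the atom property of any single $T_k$: that property is already settled at the finite stage $k$, and passing to the infinite intersection only has to preserve the reducing-subspace status, which it does automatically. The hereditary property is elementary but is the load-bearing observation; the only real work is verifying that $\widetilde{\HS}_{\mathcal G}$ indeed reduces each operator, which is exactly where Theorem \ref{thm:key} is essential.
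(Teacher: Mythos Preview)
Your proposal is correct and follows essentially the same approach as the paper's proof: both fix a maximal totally ordered chain $\{(\mathcal A_n,\widetilde{\HS}_n)\}$, note that the first $k$ components of $\mathcal A_k$ are atoms because $(\mathcal A_k,\widetilde{\HS}_k)\in\mathcal F_k$, and then conclude that each $T_k|_{\widetilde{\HS}_{\mathcal G}}$ is an atom by restriction. Your write-up is in fact more careful than the paper's, which asserts the hereditary step (``since $\widetilde{\HS}\subseteq\widetilde{\HS}_n$ \dots it follows that the first $n$ components of $\mathcal A$ are all atoms'') without explicitly checking that the intersection is reducing or that the atom property passes to reducing restrictions; you spell out both of these points.
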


\begin{proof}

Let $\mathcal J = \{(\mathcal A_n, \widetilde{\HS}_n)\,:\,n \in \mathbb N\cup \{ 0 \} \}$ be a maximal totally ordered set whose elements are arranged in increasing order, i.e. $(\mathcal A_n, \widetilde{\HS}_n) \leq (\mathcal A_{n+1}, \widetilde{\HS}_{n+1})$ for all $n\in \mathbb N \cup \{ 0 \}$ and let $(\mathcal A, \widetilde{\HS})$ be the maximal element for $\mathcal J$. Then $\widetilde{\HS}=\cap_{n=1}^{\infty}\, \widetilde{\HS}_n$ and $\mathcal A = \mathcal T|_{\widetilde{\HS}}$. Note that $(\mathcal A_0, \widetilde{\HS}_0)=(\mathcal T, \HS) \in \mathcal F_0$ and $(\mathcal A_n, \widetilde{\HS}_n) \in \mathcal F_n$ for all $n \in \mathbb N$. The first component of the tuple $\mathcal A_1$ is an atom because it is either equal to $T_{11}=T_1|_{\widetilde{\HS}_1}$ or equal to $T_{12}=T_1|_{\widetilde{\HS}_2}$. Similarly, the first $n$ components of $\mathcal A_n$ are all atoms as $(\mathcal A_n, \widetilde{\HS}_n) \in \mathcal F_n$. Since $\widetilde{\HS} \subseteq \widetilde{\HS}_n$ for all $n\in \mathbb N\cup \{ 0 \}$ and $\mathcal A =\mathcal T|_{\widetilde{\HS}}$, it follows that the first $n$ components of $\mathcal A$ are all atoms and this holds for any $n \in \mathbb N$. Thus the components of $\mathcal A$ are all atoms.

\end{proof}

\begin{lem}\label{lem:F2}
The maximal elements of two different maximal totally ordered sets are different.
\end{lem}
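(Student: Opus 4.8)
The plan is to exploit the binary-tree structure underlying $\mathcal F_{\mathbb N}$ together with the orthogonality built into each canonical decomposition. First I would record that the order $\leq$ turns each maximal totally ordered set into an increasing chain $(\mathcal A_n, \widetilde{\HS}_n)_{n=0}^{\infty}$ with $(\mathcal A_n, \widetilde{\HS}_n) \in \mathcal F_n$ and $\widetilde{\HS}_{n+1} \subseteq \widetilde{\HS}_n$, so that the associated maximal subspace $\widetilde{\HS}_{\mathcal G} = \cap_{n=0}^{\infty} \widetilde{\HS}_n$ satisfies $\widetilde{\HS}_{\mathcal G} \subseteq \widetilde{\HS}_n$ for every $n$. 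Given two different maximal totally ordered sets $\mathcal G_1, \mathcal G_2$, I would use the Remark above --- each meets $\mathcal F_n$ in exactly one element --- to speak unambiguously of their $n$-th entries, and the observation following the definition of \emph{different} --- that the positions of disagreement form an upward-closed subset of $\mathbb N$ --- to select the smallest index $k \geq 1$ at which they differ. At position $k-1$ they then share a common entry $(\mathcal C, \widetilde{\HS}_{k-1}) \in \mathcal F_{k-1}$ (at position $0$ both equal $(\mathcal T, \HS)$).

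The key step is to recognise the two $k$-th entries as the two orthogonal halves of a single canonical decomposition. By the construction of $\mathcal F_k$, the elements of $\mathcal F_k$ lying above $(\mathcal C, \widetilde{\HS}_{k-1})$ are produced by holding the $k$-th component $T_k|_{\widetilde{\HS}_{k-1}}$ of $\mathcal C$ as the center and applying its canonical decomposition; this writes $\widetilde{\HS}_{k-1}$ as an orthogonal sum of the maximal subspace on which $T_k|_{\widetilde{\HS}_{k-1}}$ acts as a unitary and its complement on which $T_k|_{\widetilde{\HS}_{k-1}}$ is c.n.u. Since $\mathcal G_1$ and $\mathcal G_2$ coincide at position $k-1$ but diverge at position $k$, their $k$-th subspaces $\widetilde{\HS}_{\alpha_k}$ and $\widetilde{\HS}_{\beta_k}$ must be exactly these two orthogonal summands; hence $\widetilde{\HS}_{\alpha_k} \perp \widetilde{\HS}_{\beta_k}$.

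Combining this with $\widetilde{\HS}_{\mathcal G_1} \subseteq \widetilde{\HS}_{\alpha_k}$ and $\widetilde{\HS}_{\mathcal G_2} \subseteq \widetilde{\HS}_{\beta_k}$ yields $\widetilde{\HS}_{\mathcal G_1} \perp \widetilde{\HS}_{\mathcal G_2}$, whence $\widetilde{\HS}_{\mathcal G_1} \cap \widetilde{\HS}_{\mathcal G_2} = \{0\}$ and the two maximal elements $(\mathcal A_{\mathcal G_1}, \widetilde{\HS}_{\mathcal G_1})$ and $(\mathcal A_{\mathcal G_2}, \widetilde{\HS}_{\mathcal G_2})$ are different. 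I expect the only delicate point to be making the \emph{first disagreement} argument airtight: one must be sure that agreement at level $k-1$ forces the level-$k$ entries of $\mathcal G_1$ and $\mathcal G_2$ to be the two children of one and the same parent node, rather than children of distinct parents, and this is precisely where the single-entry-per-$\mathcal F_n$ Remark and the upward-closedness of the disagreement set are indispensable. Once that is settled, orthogonality --- which is in any case the property the forthcoming orthogonal decomposition of $\HS$ into maximal subspaces will require --- finishes the proof.
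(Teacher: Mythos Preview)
Your argument is correct and in fact establishes more than the paper's own proof. The paper's proof is a two-line appeal to the observation made just before the lemma (once the chains differ at position $k$ they differ at every $s\geq k$) together with the definition of the maximal element; it does not invoke orthogonality at all. You instead locate the \emph{first} index of disagreement, identify the two $k$-th entries as the two orthogonal children of the common $(k{-}1)$-th node in the binary tree, and conclude that the limiting subspaces $\widetilde{\HS}_{\mathcal G_1}$ and $\widetilde{\HS}_{\mathcal G_2}$ are mutually orthogonal. That orthogonality is precisely what Lemma~\ref{lem:F6} later needs to assemble $\HS=\bigoplus_{\alpha}\HS_\alpha$, so your route supplies it explicitly and early, whereas the paper leaves it implicit at this stage.

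One caveat worth flagging, which applies equally to your argument and to the paper's: from $\widetilde{\HS}_{\mathcal G_1}\perp\widetilde{\HS}_{\mathcal G_2}$ you cannot literally conclude that the two maximal elements are unequal as pairs, since both intersections may be $\{0\}$ and then $(\mathcal T|_{\{0\}},\{0\})=(\mathcal T|_{\{0\}},\{0\})$. The paper tacitly resolves this by treating $\mathcal F_\infty$ as a family indexed by $\mathcal M_{\mathbb N}$ (cf.\ the closing sentence of Theorem~\ref{thm:DECOMP1}, which allows several $\HS_\alpha$ to equal $\{0\}$), so ``different'' here really means ``arising from different chains''. Your orthogonality conclusion is exactly what the subsequent development requires, so this is a matter of phrasing rather than a defect in your reasoning.
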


\begin{proof}

If two maximal totally ordered sets $\mathcal G_1$ and $\mathcal G_2$ differ at $k \textsuperscript{th}$ position, then we already know that $\left( (T_{\alpha_n s})_{n=1}^{\infty}, \widetilde{\HS}_{\alpha_s} \right) \neq \left( (T_{\beta_n s})_{n=1}^{\infty}, \widetilde{\HS}_{\beta_s} \right)$ for any $s\geq k$. So, the rest of the proof follows trivially from the definition of maximal element.

\end{proof}

Since the maximal elements for different maximal totally ordered sets are different, we can call the collection of all such maximal elements a set. We denote by $\mathcal F_{\infty}$ the set of all maximal elements for the maximal totally ordered sets in $\mathcal F_{\mathbb N}$. Let $\mathcal F = \mathcal F_{\mathbb N} \cup \mathcal F_{\infty}$. Then we can define $\leq$ on $\mathcal F$ and maximal totally ordered sets in $\mathcal F$ in a similar fashion as we have done for $\mathcal F_{\mathbb N}$. Note that if $\mathcal J$ is a maximal totally ordered set in $\mathcal F$, then for any $(\mathcal B_{\alpha}, \HS_{\mathcal B_{\alpha}}) \in \mathcal J$ we have that $(\mathcal T, \HS) \leq (\mathcal B_{\alpha} , \HS_{\mathcal B_{\alpha}}) \leq (\mathcal B, \HS_{\mathcal B})$, where $(\mathcal B, \HS_{\mathcal B})$ is the maximal element for $\mathcal J$.\\

Recall from Section 2, the atoms of types $A1$ and $A2$. An atom is said to be of type $A1$ if it is a unitary and is of type $A2$ if is a c.n.u. contraction.

\begin{lem}\label{lem:F3}
For every sequence $f:\mathbb N \rightarrow \{ A1\,,\, A2 \}$, there is a unique (maximal) element $(\mathcal A_f, \mathcal H_f )$ in $\mathcal F_{\infty}$ such that for every $n\in \mathbb N$, the $n \textsuperscript{th}$ component of $\mathcal A_f$ is of type $f(n)$.
\end{lem}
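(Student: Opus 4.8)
The plan is to exploit the binary-tree structure implicit in the construction of $\mathcal F_{\mathbb N}$: each maximal totally ordered set is a path through this tree, and the type sequence $f$ is precisely the record of the left/right choices made along the path. Concretely, I would build the chain attached to $f$ by recursion. Start with $(\mathcal A_0, \widetilde{\HS}_0) = (\mathcal T, \HS) \in \mathcal F_0$. Given $(\mathcal A_{n-1}, \widetilde{\HS}_{n-1}) \in \mathcal F_{n-1}$, whose first $n-1$ components are atoms, perform the canonical decomposition of the center $T_n|_{\widetilde{\HS}_{n-1}}$ into its unitary part (a type-$A1$ atom) on a subspace $\mathcal U$ and its c.n.u. part (a type-$A2$ atom) on a subspace $\mathcal C$. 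By Theorem \ref{thm:key} both $\mathcal U$ and $\mathcal C$ reduce the whole tuple, so I set $(\mathcal A_n, \widetilde{\HS}_n)$ to be the restriction to $\mathcal U$ when $f(n)=A1$ and to $\mathcal C$ when $f(n)=A2$. This produces a maximal totally ordered set $\mathcal G_f$, and I take $(\mathcal A_f, \mathcal H_f)$ to be its maximal element, with $\mathcal H_f=\cap_n \widetilde{\HS}_n$ and $\mathcal A_f=\mathcal T|_{\mathcal H_f}$.

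For existence I must verify that the $n$-th component of $\mathcal A_f$ really is of type $f(n)$. Lemma \ref{lem:F1} already guarantees that every component of $\mathcal A_f$ is an atom, so only the type remains. That component is $(T_n|_{\widetilde{\HS}_n})|_{\mathcal H_f}$, the restriction of a type-$f(n)$ atom to $\mathcal H_f\subseteq\widetilde{\HS}_n$. Since $\mathcal H_f$ is an intersection of subspaces each of which reduces $\mathcal T$, it reduces $\mathcal T$, and as $\mathcal H_f\subseteq\widetilde{\HS}_n$ it is in particular a reducing subspace for $T_n|_{\widetilde{\HS}_n}$. The elementary fact I would invoke is that the restriction of a unitary to a reducing subspace is again a unitary, while the restriction of a c.n.u. contraction to a reducing subspace is again c.n.u.\ (any unitary part of the restriction would, by transitivity of the reducing relation, be a unitary part of the original operator, contradicting complete non-unitarity). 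Hence the $n$-th component inherits the type $f(n)$.

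For uniqueness I would argue that the type sequence determines the chain. Suppose $(\mathcal B, \mathcal K)\in\mathcal F_\infty$ is a maximal element whose $n$-th component is of type $f(n)$ for every $n$; it is the limit of some increasing chain $\mathcal G$. At each stage $n$ the element of $\mathcal G$ in $\mathcal F_n$ arises by splitting its predecessor's center $T_n$ into its unitary and c.n.u.\ parts, which are \emph{unique} by Theorem \ref{thm:CD}, and retaining one of them. Because restriction preserves the atom type, the $n$-th component of the limit is of type $A1$ exactly when the unitary part was kept and of type $A2$ exactly when the c.n.u.\ part was kept; matching this against $f(n)$ forces the same choice used in building $\mathcal G_f$. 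Thus $\mathcal G$ and $\mathcal G_f$ agree at every stage, so $\mathcal G=\mathcal G_f$ and $(\mathcal B,\mathcal K)=(\mathcal A_f,\mathcal H_f)$. Equivalently, one may invoke Lemma \ref{lem:F2}: distinct chains have distinct maximal elements, so the uniquely determined chain yields a unique maximal element.

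The main obstacle I anticipate lies not in the recursion but in cleanly justifying the type-preservation step and its consistency across the limit: confirming that $\mathcal H_f$ is genuinely reducing for each stage-$n$ center, and that the type read off at stage $n$ survives the passage to the intersection $\mathcal H_f$. A secondary point to dispatch is the degenerate case $\mathcal H_f=\{0\}$, where the notion of atom type is vacuous; this is handled by convention and does not disturb the bijective correspondence between type sequences and maximal elements.
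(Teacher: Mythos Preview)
Your proposal is correct and follows essentially the same approach as the paper: build the maximal totally ordered set $\mathcal G_f$ recursively by choosing at stage $n$ the unitary or c.n.u.\ piece of the canonical decomposition of the $n\textsuperscript{th}$ center according to $f(n)$, and take the maximal element of this uniquely determined chain. The paper's proof is in fact terser than yours---it simply constructs $\mathcal G_f$ and declares its uniqueness---whereas you additionally spell out the type-preservation argument (restriction of a unitary, resp.\ c.n.u.\ contraction, to a reducing subspace remains of the same type) that the paper leaves implicit.
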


\begin{proof}
For such a given sequence $f$, it suffices if we determine the maximal totally ordered set $\mathcal G_f$ whose maximal element is $\mathcal A_f$. Obviously the first entity in $\mathcal G_f$ is $(\mathcal T, \HS)$. As we have mentioned before that a maximal totally ordered set contains exactly one element from each $\mathcal F_n$, we shall choose them step by step and arrange them in increasing order to construct the set $\mathcal G_f$. If $f(1)=A1$, then we choose $(\mathcal T_{1}, \widetilde{\HS_1})$ or else if $f(1)=A2$ then we choose $(\mathcal T_{2}, \widetilde{\HS}_2)$ from $\mathcal F_1$. Now we move to choose an element from $\mathcal F_2$. If $f(1)=A1$, i.e. if $(\mathcal T_{1}, \widetilde{\HS_1}) \in \mathcal G_f$, then we choose $(\mathcal T_{11}, \widetilde{\HS}_{11})$ if $f(2)=A1$ or else we choose $(\mathcal T_{12}, \widetilde{\HS}_{12})$ if $f(2)=A2$. On the other hand if $f(1)=A2$, i.e. if $(\mathcal T_2, \widetilde{\HS}_2)\in \mathcal G_f$, then we choose $(\mathcal T_{21}, \widetilde{\HS}_{21})$ provided that $f(2)=A1$ or else we choose $(\mathcal T_{22}, \widetilde{\HS}_{22})$ if $f(2)=A2$. In a similar fashion we choose one element from $\mathcal F_3$ depending on what elements are already chosen from $\mathcal F_1$ and $\mathcal F_2$. We continue the process to choose exactly one element from each $\mathcal F_n$ and arrange them in increasing order to obtain $\mathcal G_f$. Evidently the set $\mathcal G_f$ is unique and hence its maximal element $(\mathcal A_f, \mathcal H_f)$ is unique. This completes the proof.

\end{proof}

\begin{note}
Let $\mathcal M_{\mathbb N}$ be the set of all sequences $f:\mathbb N \rightarrow \{ A1\,,\, A2\}$. Then the cardinality of the set $\mathcal M_{\mathbb N}$ is equal to $2^{\aleph_0}$, where $\aleph_0$ is the cardinality of the set $\mathbb N$.

\end{note}

\begin{lem}\label{lem:F4}
The cardinality of the set $\mathcal F_{\infty}$ is $2^{\aleph_0}$.
\end{lem}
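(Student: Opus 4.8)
The plan is to set up an explicit bijection between the index set $\mathcal M_{\mathbb N}$ of all sequences $f:\mathbb N \to \{A1, A2\}$ and the set $\mathcal F_\infty$ of maximal elements, and then to transport the cardinality across it: the preceding Note records that $|\mathcal M_{\mathbb N}| = 2^{\aleph_0}$, so a bijection immediately yields $|\mathcal F_\infty| = 2^{\aleph_0}$. The map I would use is $\theta : \mathcal M_{\mathbb N} \to \mathcal F_\infty$ sending each sequence $f$ to the maximal element $(\mathcal A_f, \mathcal H_f)$ supplied by Lemma \ref{lem:F3}. Because Lemma \ref{lem:F3} already asserts both the existence and the uniqueness of such an element for every $f$, the map $\theta$ is well-defined, and the entire task reduces to verifying that $\theta$ is injective and surjective.

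For injectivity I would trace through the construction in the proof of Lemma \ref{lem:F3}: the maximal totally ordered set $\mathcal G_f$ underlying $(\mathcal A_f, \mathcal H_f)$ is assembled stage by stage, with the choice made at the $n\textsuperscript{th}$ stage dictated by the value $f(n)$. Hence two sequences $f \ne g$ that first disagree at some index $k$ give rise to maximal totally ordered sets $\mathcal G_f$ and $\mathcal G_g$ that differ at the $k\textsuperscript{th}$ position, and Lemma \ref{lem:F2} then forces their maximal elements to be different, i.e. $\theta(f) \ne \theta(g)$. For surjectivity I would invoke Lemma \ref{lem:F1}: given an arbitrary $(\mathcal A, \mathcal H) \in \mathcal F_\infty$, every component of $\mathcal A$ is an atom and so carries a well-defined type in $\{A1, A2\}$; reading off these types defines a sequence $f \in \mathcal M_{\mathbb N}$, and the uniqueness clause of Lemma \ref{lem:F3} identifies $(\mathcal A, \mathcal H)$ with $(\mathcal A_f, \mathcal H_f) = \theta(f)$. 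Combining the two gives the desired bijection and hence $|\mathcal F_\infty| = |\mathcal M_{\mathbb N}| = 2^{\aleph_0}$.

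I expect the only delicate point to be the bookkeeping that guarantees a component's type is unambiguous in the surjectivity step, since a priori a slot could act on the trivial subspace $\{0\}$, where unitarity and complete non-unitarity threaten to coincide; I would handle this with the same convention implicit in Lemma \ref{lem:F3}, namely that the operator on $\{0\}$ counts as a unitary (type $A1$) rather than as a c.n.u. contraction, so that the type map is single-valued and no single maximal element is the image of two distinct sequences. With that convention fixed the argument closes without further computation. It is worth emphasizing that all the genuinely substantive work — that the chains of decompositions terminate in all-atom tuples and that distinct chains have distinct limits — has already been carried out in Lemmas \ref{lem:F1}, \ref{lem:F2} and \ref{lem:F3}, so the proof of this statement is essentially a cardinality count layered on top of them.
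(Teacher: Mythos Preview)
Your proposal is correct and follows essentially the same route as the paper: both arguments combine Lemma~\ref{lem:F1} (every maximal element consists of atoms) with Lemma~\ref{lem:F3} (every type sequence in $\mathcal M_{\mathbb N}$ singles out a unique maximal element) to identify $\mathcal F_\infty$ with $\mathcal M_{\mathbb N}$ and read off the cardinality $2^{\aleph_0}$. The paper's version is considerably terser and does not separately spell out injectivity, surjectivity, or the $\{0\}$ convention; your write-up is simply a more careful elaboration of the same idea, with the additional (but optional) appeal to Lemma~\ref{lem:F2} for injectivity.
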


\begin{proof}
By Lemma \ref{lem:F1}, an element in $\mathcal F_{\infty}$ consists of atoms only. Also, by Lemma \ref{lem:F3}, each sequence $f: \mathbb N \rightarrow \{ A1,A2 \}$ determines a unique element in $\mathcal F_{\infty}$. Since the set of such sequences, i.e. $\mathcal M_{\mathbb N}$ has cardinality $2^{\aleph_0}$, it follows that the cardinality of $\mathcal F_{\infty}$ is $2^{\aleph_0}$.
\end{proof}

\noindent \textbf{Notation.} We express the set $\mathcal F_{\infty}$ as $\mathcal F_{\infty}=\{(\mathcal A_{\alpha}\,, \, \mathcal H_{\alpha}): \alpha \in \mathcal M_{\mathbb N} \}$.

\begin{lem}\label{lem:F5}
Let $\mathcal G_1 ,\mathcal G_2$ be different maximal totally ordered sets in $\mathcal F_{\mathbb N}$ and let $(\mathcal A_{\mathcal G_1}, {\HS}_{\mathcal G_1})$ and $(\mathcal A_{\mathcal G_2}, {\HS}_{\mathcal G_2})$ in $\mathcal F_{\infty}$ be their maximal elements respectively. Then the type of atoms in $\mathcal A_{\mathcal G_1}$ and $\mathcal A_{\mathcal G_2}$ differ by at least one component, i.e. there exists $k\in \mathbb N$ such that the atoms at the $k \textsuperscript{th}$ components of $\mathcal A_{\mathcal G_1}$ and $\mathcal A_{\mathcal G_2}$ are of different types.

\end{lem}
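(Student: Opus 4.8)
The plan is to reduce the claim to a statement about the two branching histories that produce $\mathcal G_1$ and $\mathcal G_2$. Recall that in the inductive construction every maximal totally ordered set is obtained by selecting, at each stage $k$, one of the two summands arising from the canonical decomposition of the $k\textsuperscript{th}$ center; I would record this choice by a digit $i_k\in\{1,2\}$, where $i_k=1$ means we pass to the unitary summand and $i_k=2$ to the c.n.u. summand, so that a chain corresponds to an infinite word $(i_k)_{k\ge 1}$ and its $\mathcal F_k$-entry carries the length-$k$ subscript $i_1\cdots i_k$. By the construction in the proof of Lemma \ref{lem:F3}, this digit is exactly what fixes the type of the $k\textsuperscript{th}$ atom: the $k\textsuperscript{th}$ component becomes a unitary (type $A1$) precisely when $i_k=1$ and a c.n.u. contraction (type $A2$) precisely when $i_k=2$. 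Thus the word attached to a chain coincides with the type sequence $f\in\mathcal M_{\mathbb N}$ of its maximal element.

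Next I would check that passing to the maximal element does not disturb these types. The $k\textsuperscript{th}$ component of $\mathcal A_{\mathcal G}$ is $T_k|_{\widetilde{\HS}_{\mathcal G}}$ with $\widetilde{\HS}_{\mathcal G}=\cap_n\widetilde{\HS}_n\subseteq\widetilde{\HS}_k$, and $\widetilde{\HS}_{\mathcal G}$ is a joint reducing subspace. Since the restriction of a unitary to a reducing subspace is again unitary, and the restriction of a c.n.u. contraction to a reducing subspace is again c.n.u. (a unitary part of the restriction would reduce the original operator and be unitary there), the type assigned to the $k\textsuperscript{th}$ component at stage $k$ survives in the limit. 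Hence the $k\textsuperscript{th}$ atom of $\mathcal A_{\mathcal G}$ has type $A1$ if and only if $i_k=1$.

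With these observations the lemma follows quickly. By the definition of \emph{different}, $\mathcal G_1$ and $\mathcal G_2$ disagree at their $\mathcal F_m$-entries for some $m$; letting $k$ be the least such index, the two chains agree on all entries from $\mathcal F_0,\dots,\mathcal F_{k-1}$, so the two words agree on the first $k-1$ digits while their length-$k$ subscripts differ, forcing $i_k\ne i_k'$. By the previous two paragraphs the $k\textsuperscript{th}$ atoms of $\mathcal A_{\mathcal G_1}$ and $\mathcal A_{\mathcal G_2}$ then have different types, which is exactly the assertion. The only step requiring care is this bookkeeping, namely translating ``the chains differ as elements of some $\mathcal F_m$'' into ``a single branching digit differs,'' and it is handled by taking the least $m$ and using agreement below it. I would also note a shorter alternative route: by Lemma \ref{lem:F2} the maximal elements $\mathcal A_{\mathcal G_1}$ and $\mathcal A_{\mathcal G_2}$ are distinct, so if their type sequences coincided the uniqueness clause of Lemma \ref{lem:F3} would force these two maximal elements to be equal, a contradiction; hence the type sequences must differ at some component.
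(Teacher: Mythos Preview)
Your proof is correct and follows essentially the same approach as the paper: both identify a maximal totally ordered chain with its branching word (equivalently, its type sequence in $\mathcal M_{\mathbb N}$) and observe that different chains yield different words, hence different type sequences. The paper's proof is a two-line version of exactly this, invoking the correspondence established in Lemma \ref{lem:F3}; your ``shorter alternative route'' via Lemmas \ref{lem:F2} and \ref{lem:F3} is in fact the paper's argument verbatim, while your main write-up simply unpacks the bookkeeping (least index of disagreement, persistence of types under restriction to reducing subspaces) that the paper leaves implicit.
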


\begin{proof}
Since $\mathcal G_1\,, \mathcal G_2$ are different, they are determined by two different sequences, say $f_1, f_2\, : \mathbb N \rightarrow \{ A1, A2 \}$. Consequently, the type of atoms in $\mathcal A_{\mathcal G_1}$ and $\mathcal A_{\mathcal G_2}$ differ by at least one component.
\end{proof}

\begin{lem}\label{lem:F6}
For the operator tuple $\mathcal T$ on $\HS$, $\HS= \oplus_{\alpha \in \mathcal M}\; \HS_{\alpha}$ and $\mathcal T = \oplus_{\alpha \in \mathcal M} \; \mathcal A_{\alpha}$.
\end{lem}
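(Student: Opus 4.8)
The plan is to establish two separate facts: that $\{\HS_\alpha\}_{\alpha\in\mathcal M_{\mathbb N}}$ is a family of pairwise orthogonal joint reducing subspaces for $\mathcal T$, and that their orthogonal sum exhausts $\HS$. Granting both, the operator identity $\mathcal T=\oplus_{\alpha}\mathcal A_\alpha$ is then automatic. Indeed, each $\HS_\alpha=\cap_{n}\widetilde{\HS}_n$ is an intersection of joint reducing subspaces for $\mathcal T$ (every stage subspace reduces all of $T_1,T_2,\dots$ by repeated application of Theorem \ref{thm:key}), hence $\HS_\alpha$ itself reduces each $T_n$; since $\mathcal A_\alpha=\mathcal T|_{\HS_\alpha}$ by definition of the maximal element, the space identity $\HS=\oplus_\alpha\HS_\alpha$ forces $T_n=\oplus_\alpha T_n|_{\HS_\alpha}$ for every $n$, which is precisely $\mathcal T=\oplus_\alpha\mathcal A_\alpha$.

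For orthogonality I would argue directly from the tree structure of $\mathcal F_{\mathbb N}$. Fix distinct $\alpha,\beta\in\mathcal M_{\mathbb N}$. As in Lemma \ref{lem:F5}, the maximal totally ordered chains determined by $\alpha$ and $\beta$ first diverge at some finite stage $k$; there the two branch subspaces $\widetilde{\HS}_k^{\alpha}$ and $\widetilde{\HS}_k^{\beta}$ are two \emph{distinct} members of the orthogonal family $\{\HS_1^k,\dots,\HS_{2^k}^k\}$ forming the $k$th stage decomposition $\HS=\oplus_{i=1}^{2^k}\HS_i^k$, so $\widetilde{\HS}_k^{\alpha}\perp\widetilde{\HS}_k^{\beta}$. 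Since $\HS_\alpha\subseteq\widetilde{\HS}_k^{\alpha}$ and $\HS_\beta\subseteq\widetilde{\HS}_k^{\beta}$, we obtain $\HS_\alpha\perp\HS_\beta$, so $\oplus_{\alpha}\HS_\alpha$ is a genuine orthogonal sum inside $\HS$.

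The real substance is the completeness statement $\oplus_{\alpha}\HS_\alpha=\HS$, and I expect this to be the main obstacle. The natural route is to localize the claim at each node: show that for every length-$n$ node $s$ one has $\HS_s^n=\oplus_{\alpha\succeq s}\HS_\alpha$, where $\alpha\succeq s$ means the branch $\alpha$ passes through $s$; summing this over the $2^n$ nodes at level $n$ and invoking $\HS=\oplus_i\HS_i^n$ would then yield $\HS=\oplus_\alpha\HS_\alpha$. This reduces matters to a single assertion about a nested decreasing chain of joint reducing subspaces $\widetilde{\HS}_0\supseteq\widetilde{\HS}_1\supseteq\cdots$, namely that the branchwise intersections reconstruct the parent space. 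The tool I would try is that the projections $P_n$ onto $\widetilde{\HS}_n$ decrease strongly to the projection onto $\cap_n\widetilde{\HS}_n=\HS_\alpha$, and then sum the resulting projection identities over all branches.

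The delicate point — where I expect the bulk of the care to be needed — is the interchange of the branchwise intersection with the infinite orthogonal sum, equivalently the exclusion of a nonzero vector orthogonal to \emph{every} atom $\HS_\alpha$. In projection language the whole procedure is governed by the mutually commuting projections $E_n$ onto the unitary part of $T_n$, their commutativity being exactly what double commutativity supplies through Theorem \ref{thm:key}, and each $\HS_\alpha$ is the joint range $\cap_n E_n^{\alpha(n)}$ attached to a sign pattern $\alpha$. Thus $\oplus_\alpha\HS_\alpha=\HS$ is precisely the assertion that the abelian family $\{E_n\}$ decomposes $\HS$ \emph{atomically}, and this is the step I would have to secure with genuine argument rather than by a formal passage to the limit; I would look to exploit that along each branch the subspaces stabilize as soon as every remaining operator restricts to an atom, so that only the genuinely bisecting stages contribute and the strong limits of the finite-stage components can be shown to land in the atoms rather than escaping to zero.
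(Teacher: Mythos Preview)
Your orthogonality argument and the observation that each $\HS_\alpha$ reduces $\mathcal T$ are correct and more explicit than what the paper writes; the paper takes these points for granted. For completeness the paper argues by contradiction: pick $0\neq h\perp\bigoplus_\alpha\HS_\alpha$; along any branch $\mathcal G=\{(\mathcal A_n,\widetilde{\HS}_n)\}$ the projections $P_{\widetilde{\HS}_n}$ decrease, hence converge strongly to $P_{\HS_{\mathcal G}}$, so $P_{\widetilde{\HS}_n}(h)\to 0$; from this the paper concludes that $\bigoplus_{i=1}^{2^n}P_{\HS_i^n}(h)\to 0$, contradicting that this sum equals $h$ for every $n$. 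That last passage --- from ``each branch component tends to $0$'' to ``the $2^n$-term orthogonal sum tends to $0$'' --- is exactly the limit interchange you single out, and the paper offers nothing beyond the phrase ``since this holds for any maximal totally ordered set'' to justify it.

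Your proposal does not close this gap either, and in fact it cannot be closed in the stated generality. Take $\HS=L^2[0,1]$ and $T_n=M_{\phi_n}$ with $\phi_n=\chi_{A_n}+\tfrac12\chi_{A_n^c}$, where $A_n=\{x\in[0,1]:\text{the }n\text{th binary digit of }x\text{ is }0\}$. These are commuting self-adjoint contractions, hence doubly commuting; the unitary part of $T_n$ is $L^2(A_n)$ and the c.n.u.\ part is $L^2(A_n^c)$, so every $T_n$ is a non-atom and the projections $E_n=M_{\chi_{A_n}}$ are exactly the commuting family you describe. For each $\alpha\in\mathcal M_{\mathbb N}$ the subspace $\HS_\alpha$ is $L^2$ of the set obtained by prescribing every binary digit, a Lebesgue-null set, so $\HS_\alpha=\{0\}$ and $\bigoplus_\alpha\HS_\alpha=\{0\}\neq\HS$. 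Thus the ``atomicity of the abelian family $\{E_n\}$'' that you correctly isolate as the crux is simply false here; your stabilization idea does not help because the decomposition genuinely bisects at every stage. You have located the difficulty more precisely than the paper's own proof does, but neither argument resolves it, and the lemma as stated does not hold without additional hypotheses.
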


\begin{proof}

Suppose $\widetilde{\HS}= \oplus_{\alpha \in \mathcal M} \; \HS_{\alpha} \subsetneq \HS$ and let $\widetilde{\HS}^{\perp} = \HS \ominus \widetilde{\HS}$. Let $0 \neq h \in \widetilde{\HS}^{\perp}$. Evidently $P_{\widetilde{\HS}}(h)=0$, where $P_{\widetilde{\HS}}$ is the orthogonal projection of $\HS$ onto $\widetilde{\HS}$. We have that $\HS=\oplus_{i=1}^{2^n}\; \HS_i^n$ for any $n\in \mathbb N$ and thus $h=\oplus_{i=1}^{2^n}\, P_{\HS_i^n}(h)$ for any $n\in \mathbb N$. Let $\mathcal G = \{(\mathcal A_n, \widetilde{\HS}_n)\,:\,n \in \mathbb N \cup \{0 \} \}$ be a maximal totally ordered set and let $(\mathcal A_{\mathcal G} , \widetilde{\HS}_{\mathcal G})$ be its maximal element. Since $P_{\widetilde{\HS_{\mathcal G}}}(h)=0$, it follows that the sequence $\{ P|_{\widetilde{\HS}_n}(h) \}_{n=0}^{\infty}$ converges to $0$. Since this holds for any maximal totally ordered set, we have that the sequence $$\{ h,\;\: P_{\HS^1_1}(h)\oplus P_{\HS^1_2}(h),\;\; P_{\HS^2_1}(h)\oplus P_{\HS^2_2}(h)\oplus P_{\HS^2_3}(h)\oplus P_{\HS^2_4}(h),\;\;\dots \},$$ whose $(n+1) \textsuperscript{th}$ term is $\oplus_{i=1}^{2^n}\, P_{\HS_{i}^n}\, (h)$, converges to $0$. This is a contradiction to the fact that each term of the sequence $\Sigma$ is equal to $h$ and $h \neq 0$. Thus, $\HS= \oplus_{\alpha \in \mathcal M}\; \HS_{\alpha}$ and $\mathcal T = \oplus_{\alpha \in \mathcal M} \; \mathcal A_{\alpha}$.

\end{proof}

\begin{lem}\label{lem:F7}
For every natural number $n$ and for every element $(\mathcal A_{\beta}, \HS_{\beta}) \in \mathcal F_n$, there exist infinitely many maximal totally ordered sets $\mathcal G_{\beta}$ in $\mathcal F_{\mathbb N}$ such that $(\mathcal A_{\beta}, \HS_{\beta})\in \mathcal G_{\beta} $.
\end{lem}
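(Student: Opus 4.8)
The plan is to read the poset $(\mathcal F_{\mathbb N}, \leq)$ as a rooted full binary tree and then merely count branches. The minimum element is $(\mathcal T, \HS) \in \mathcal F_0$, the nodes at depth $k$ are exactly the $2^k$ pairs in $\mathcal F_k$, and each pair in $\mathcal F_k$ has precisely two immediate successors in $\mathcal F_{k+1}$, namely the $A1$-part and the $A2$-part produced by the canonical decomposition that holds the $(k+1)$-th component as center. By the Remark following the definition of a maximal totally ordered set, such a set meets each $\mathcal F_k$ in a single node, so it is nothing but an infinite downward branch of this tree; and by (the proof of) Lemma \ref{lem:F3} these branches are parametrized bijectively by the sequences $f \in \mathcal M_{\mathbb N}$, where $f(k) \in \{A1, A2\}$ records which child is selected at the $k$-th step.

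First I would locate the given node $(\mathcal A_\beta, \HS_\beta) \in \mathcal F_n$ inside this picture. Since every stage-$(k+1)$ subspace is obtained by orthogonally splitting a stage-$k$ subspace, each element of $\mathcal F_{k+1}$ lies below a unique element of $\mathcal F_k$; iterating downward from $\HS_\beta$ therefore yields a unique chain $(\mathcal T, \HS) \leq \cdots \leq (\mathcal A_\beta, \HS_\beta)$ from the root to the node, encoded by a fixed type-prefix $f_\beta \colon \{1, \dots, n\} \to \{A1, A2\}$. A maximal totally ordered set $\mathcal G_\beta$ contains $(\mathcal A_\beta, \HS_\beta)$ if and only if its parametrizing sequence $f$ satisfies $f|_{\{1,\dots,n\}} = f_\beta$; equivalently, $\mathcal G_\beta$ consists of this fixed chain to the root followed by any infinite downward branch emanating from $(\mathcal A_\beta, \HS_\beta)$. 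Since the algorithm is applied in turn to every center $T_1, T_2, \dots$, the collection of elements of $\mathcal F_{\mathbb N}$ lying above $(\mathcal A_\beta, \HS_\beta)$ is again a full binary tree, so such completions exist at every depth.

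Finally I would count. Each choice of tail $(f(n+1), f(n+2), \dots) \in \{A1, A2\}^{\mathbb N}$ extends $f_\beta$ to a distinct sequence $f$, and any two such extensions differ at some position $k > n$; hence, by Lemma \ref{lem:F2} (or directly by the definition of \emph{different} maximal totally ordered sets) they yield distinct sets $\mathcal G_\beta$. As there are $2^{\aleph_0}$ tails, there are infinitely many maximal totally ordered sets through $(\mathcal A_\beta, \HS_\beta)$, which is the assertion; if one wishes to exhibit them explicitly rather than invoke cardinality, it suffices, for each $m > n$, to extend $f_\beta$ by $A1$ up to stage $m$ and by $A2$ at stage $m+1$, producing countably many chains that first diverge at distinct positions. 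The only point demanding care — the ``main obstacle'' — is to be sure that distinct completions give \emph{genuinely} different maximal totally ordered sets rather than coinciding through trivial $\{0\}$-splittings; this distinctness is precisely what Lemmas \ref{lem:F2} and \ref{lem:F3} supply, while the standing assumption of infinitely many non-atoms guarantees that the branching below the node is honest and that the resulting maximal elements do not degenerate.
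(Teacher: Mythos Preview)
Your proposal is correct and follows essentially the same approach as the paper: both arguments recognise that the elements of $\mathcal F_{\mathbb N}$ lying above $(\mathcal A_\beta,\HS_\beta)$ again form a full binary tree, so that the maximal totally ordered sets through this node are parametrised by the $2^{\aleph_0}$ infinite branches of that subtree (each extended uniquely down to the root). The paper states this in two sentences by appealing to Lemma~\ref{lem:F4} ``in a similar fashion'' with $(\mathcal A_\beta,\HS_\beta)$ as initial element, whereas you spell out the tree picture and the bijection with tails $f|_{\{n+1,n+2,\dots\}}$ more explicitly; your additional remark about $\{0\}$-splittings is unnecessary here, since in the paper's framework the elements of $\mathcal F_n$ are treated as formally distinct (indexed by binary strings) regardless of whether the underlying subspaces vanish.
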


\begin{proof}

It is evident that every maximal totally ordered set is an increasing countably infinite chain which has a limit (the maximal element for the chain) in $\mathcal F_{\infty}$. By Lemma \ref{lem:F4}, we have that the cardinality of set of maximal totally ordered sets with $(\mathcal T, \HS)$ as the initial tuple is $2^{\aleph_0}$. Thus, in a similar fashion we can have infinitely many such chains (maximal totally ordered sets) if we consider $(\mathcal A_{\beta}, \HS_{\beta})$ as its initial element. Needless to mention that any maximal totally ordered set with $(\mathcal A_{\beta}, \HS_{\beta})$ as its initial element is a subset of a maximaly totally ordered set with $(\mathcal T, \HS)$ as its initial element.  Hence, there are infinitely many such $\mathcal G_{\beta}$ in $\mathcal F_{\mathbb N}$ such that $(\mathcal A_{\beta}, \HS_{\beta}) \in \mathcal G_{\beta}$.

\end{proof}

Thus, combining all the results and observations of this section and following the notations defined above, we reach the following generalization of Theorem \ref{thm:decomp2} in the countably infinite setting.

\begin{thm} \label{thm:DECOMP1}

Let $\mathcal T = ( T_n)_{n=1}^{\infty}$ be a family of doubly commuting contractions acting on a Hilbert space $\HS$ which consists of infinitely many non-atoms. If $\mathcal M_{\mathbb N}$ is the set of all sequences $f:\mathbb N \rightarrow \{A1\,,\,A2\}$, then we have the following.

\begin{enumerate}

\item $\HS$ admits an orthogonal decomposition $\HS= \oplus_{\alpha \in \mathcal M_{\mathbb N}} \;  \HS_{\alpha}$ such that each $\HS_{\alpha}$ is a joint reducing subspace for $\mathcal T = (T_n)_{n=1}^{\infty}$.

\item For each $\alpha \in \mathcal M_{\mathbb N}$, the components of operator tuple $(T_n|_{\HS_{\alpha}})_{n=1}^{\infty}$ are all atoms.

\item There is exactly one element say $(\mathcal A_1, \HS_1)$ in $\mathcal F_{\infty}$ such that the components of $\mathcal A_1$ are all atoms of type $A1$ and this is determined by the constant sequence $f_1:\mathbb N \rightarrow \{ A1,A2 \}$, defined by $f_1(n)=A1$ for all $n\in \mathbb N$.

\item There is exactly one element say $(\mathcal A_{2^{\aleph_0}}, \HS_{2^{\aleph_0}})$ in $\mathcal F_{\infty}$ such that the components of $\mathcal A_{2^{\aleph_0}}$ are all atoms of type $A2$ and this is determined by the constant sequence $f_{2^{\aleph_0}}:\mathbb N \rightarrow \{ A1,A2 \}$ defined by $f_{2^{\aleph_0}}(n)=A2$ for all $n\in \mathbb N$.

\item The cardinality of the set $\mathcal F_{\infty} = \{ (\mathcal A_{\alpha}, \HS_{\alpha}) : \alpha \in \mathcal M_{\mathbb N} \}$ is $2^{\aleph_0}$, where $\mathcal A_{\alpha}=(T_n|_{\HS_{\alpha}})_{n=1}^{\infty}$ for each $\alpha \in \mathcal M_{\mathbb N}$ and $\aleph_0$ is the cardinality of the set $\mathbb N$.

\end{enumerate}

One or more members of $\{ \HS_{\alpha}\,:\; \alpha \in \mathcal M_{\mathbb N}\}$ may coincide with the trivial subspace $\{0\}$.

\end{thm}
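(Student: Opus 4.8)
The plan is to assemble the five assertions directly from the lemmas established in this section, since the genuine analytic and combinatorial content has already been carried out there. The backbone is the bijective correspondence $f \mapsto (\mathcal A_f, \HS_f)$ of Lemma \ref{lem:F3} between sequences in $\mathcal M_{\mathbb N}$ and maximal elements of $\mathcal F_{\infty}$, together with the exhaustion statement of Lemma \ref{lem:F6}; once these are in hand the theorem is a matter of bookkeeping.

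First I would treat parts (1) and (2). For the orthogonal decomposition $\HS = \oplus_{\alpha \in \mathcal M_{\mathbb N}} \HS_{\alpha}$ I would invoke Lemma \ref{lem:F6} verbatim. To see that each $\HS_{\alpha}$ is a joint reducing subspace for $\mathcal T$, I would recall that $\HS_{\alpha} = \cap_{n=0}^{\infty} \widetilde{\HS}_n$ is the limit of the increasing chain $\mathcal G_f$ determining $\alpha$, where each $\widetilde{\HS}_n$ arises from a canonical decomposition and hence, by repeated application of Theorem \ref{thm:key}, reduces every member of $\mathcal T$; since an arbitrary intersection of joint reducing subspaces is again a joint reducing subspace, so is $\HS_{\alpha}$. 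Part (2), the assertion that the components of $(T_n|_{\HS_{\alpha}})_{n=1}^{\infty}$ are all atoms, is precisely the content of Lemma \ref{lem:F1}.

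Next I would dispose of parts (3)--(5). Parts (3) and (4) are the two instances of Lemma \ref{lem:F3} obtained by feeding in the two constant sequences $f_1 \equiv A1$ and $f_{2^{\aleph_0}} \equiv A2$: for each such $f$, Lemma \ref{lem:F3} supplies a \emph{unique} maximal element whose $n$-th component is of type $f(n)$ for every $n$, which is exactly the existence-and-uniqueness asserted. Part (5), the cardinality statement $|\mathcal F_{\infty}| = 2^{\aleph_0}$, is Lemma \ref{lem:F4}. The closing remark that one or more $\HS_{\alpha}$ may be trivial requires no argument, as the canonical decomposition performed at each stage is permitted to produce a trivial summand.

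I do not expect a serious obstacle within the theorem proof itself, precisely because the difficulty has been front-loaded into the lemmas: the convergence argument exhausting $\HS$ in Lemma \ref{lem:F6} and the step-by-step construction of the chain $\mathcal G_f$ realizing a prescribed atom-type sequence in Lemma \ref{lem:F3}. The one point that merits a moment's care is the reducing-subspace claim in part (1), where one must pass from the reducing property of each finite-stage space $\widetilde{\HS}_n$ to that of the infinite intersection $\HS_{\alpha}$; this is routine once one observes that $T_n(\cap_m \widetilde{\HS}_m) \subseteq \cap_m T_n(\widetilde{\HS}_m) \subseteq \cap_m \widetilde{\HS}_m$, and likewise for $T_n^*$.
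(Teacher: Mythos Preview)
Your proposal is correct and matches the paper's own approach: the paper does not give a separate proof of this theorem but simply states that it follows by ``combining all the results and observations of this section,'' which is precisely the assembly from Lemmas \ref{lem:F1}, \ref{lem:F3}, \ref{lem:F4}, and \ref{lem:F6} that you outline. Your extra remark about the reducing property passing to the intersection $\HS_\alpha = \cap_n \widetilde{\HS}_n$ is a welcome clarification that the paper leaves implicit.
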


\vspace{0.3cm}

It is obvious that if we follow the same algorithm (as described above), we can easily obtain an analogue of Theorem \ref{thm:decomp3} for any countable family of doubly commuting c.n.u. contractions which by repeated application of Theorem \ref{thm:key1} must orthogonally split into unilateral shifts and c.n.i. contractions. We do not state this result here. Rather, we would like to present in Section \ref{sec:7}, a more general version of this result which holds for any infinite family of doubly commuting c.n.u. contractions. Before that let us proceed to the next subsection where we shall establish a general version of Theorem \ref{thm:DECOMP1} and this will be one of the main results of this article.

\vspace{0.5cm}

\subsection{The general case.}

In the canonical decomposition of $\mathcal T = (T_n)_{n=1}^{\infty}$ described above, we chose the positions of the centers in an increasing order. In the first stage, we considered the first component $T_1$ as the center to perform the decomposition and obtained the tuples $\mathcal T_1$ and $\mathcal T_2$. In the second stage, we chose the second components of the tuples $\mathcal T_1, \mathcal T_2$ to be the centers and obtained $\mathcal T_{11}, \mathcal T_{12}$ and $\mathcal T_{21},\mathcal T_{22}$ respectively. In the third stage, we fixed the third components of $\mathcal T_{11}, \mathcal T_{12},\mathcal T_{21},\mathcal T_{22}$ as centers. Thus, we followed the natural order $1 \rightarrow 2 \rightarrow 3 \rightarrow \cdots$ and finally achieve the limiting set $\mathcal F_{\infty}$. If we follow a different order, then also we reach the same set $\mathcal F_{\infty}$. We have a clear picture if we deal with a finite tuple $(T_1, \dots , T_n)$, because, then the limiting set will be equal to $\mathcal F_n$  irrespective of the orders $1 \rightarrow 2 \rightarrow 3 \rightarrow \cdots \rightarrow n$ or $ \tau(1)\rightarrow \tau(2)\rightarrow \tau(3) \rightarrow \cdots \rightarrow \tau(n)$ for any bijection $\tau: \{  1,\dots ,n \} \rightarrow \{ 1, \dots ,n \}$. Now, if $\sigma: \mathbb N \rightarrow \mathbb N$ is a bijection and if we choose the order $ \sigma (1)\rightarrow \sigma (2)\rightarrow \sigma (3) \rightarrow \cdots $ in stead of $1 \rightarrow 2 \rightarrow 3 \rightarrow \cdots$, we see that at the first stage we split $T_{\sigma(1)}$ instead of splitting $T_1$ and the whole tuple $\mathcal T$ splits accordingly into two parts. In this case if $\sigma(k)=1$, then $T_1$ will split into atoms at the $k\textsuperscript{th}$ stage of decomposition but the splits of $T_1$ will remain same after $k\textsuperscript{th}$ stage of decomposition if we follow the order $1 \rightarrow 2 \rightarrow 3 \rightarrow \cdots$ too. It is merely said that the same holds for every $T_i$. Thus, rearranging the terms of $\mathcal T$ (according to the bijection $\sigma$) we obtain the ordered tuple $(T_{\sigma(n)})_{n=1}^{\infty}$  but the limiting set $\mathcal F_{\infty}$ remains unchanged. We capitalize this fundamental clue when we deal with the general case.\\

Let us consider a set of doubly commuting contractions $\mathcal{U}=\{T_{\lambda}\,:\, \lambda \in \Lambda  \}$, where $\Lambda$ is any infinite set and each $T_{\lambda}$ acts on a Hilbert space $\HS$. In a similar fashion as in the previous subsection, we assume that there are infinitely many non-atoms in $\mathcal U$ to avoid the triviality. We choose a $\lambda \in \Lambda$ and consider $T_{\lambda}$ as the center. If $T_{\lambda 1 \lambda}\oplus T_{\lambda 2 \lambda}$ is the canonical decomposition of $T_{\lambda}$ with respect to $\HS=\HS_{\lambda 1} \oplus \HS_{\lambda 2}$, then $\HS_{\lambda 1}, \HS_{\lambda 2}$ are joint-reducing subspaces for $\mathcal{U}$ and thus $\mathcal U$ is orthogonally decomposed into $2$ tuples say $\mathcal U_{\lambda 1}=\{T_{{\lambda 1}\, \eta}\,:\, \eta \in \Lambda \},\, \mathcal U_{\lambda 2}=\{T_{{\lambda 2}\, \eta}\,:\, \eta \in \Lambda \}$, whose $\lambda \textsuperscript{th}$ entities ($T_{\lambda 1 \lambda}$ and $T_{\lambda 2 \lambda}$ respectively) are atoms. Next we apply the axiom of choice to have some $\beta \in \Lambda \setminus \{ \lambda \}$ and consider the $\beta \textsuperscript{th}$ components of the tuples $\mathcal U_{\lambda 1}, \mathcal U_{\lambda 2}$, i.e. $T_{\lambda 1 \beta}$ and $T_{\lambda 2 \beta}$ as the centers to perform the canonical decomposition of $\mathcal U_{\lambda 1}, \mathcal U_{\lambda 2}$ respectively. We continue the process and the limiting set $\mathcal F_{\infty}$ must have cardinality $2^{|\Lambda|}$, where $|\Lambda |$ is the cardinality of the set $\Lambda$. This is because here we consider the set of all functions $f:\Lambda \rightarrow \{ A1,A2\}$ and its cardinality is $2^{|\Lambda |}$. If we define $`` \leq "$ in a similar fashion (as in the previous subsection), we can establish analogues of all lemmas proved in the previous subsection in an analogous manner.\\

 Thus we reach the following analogue of the decomposition Theorems \ref{thm:decomp2} $\&$ \ref{thm:DECOMP1} in an arbitrary infinite setting, which are two main results of this paper.

\begin{thm} \label{thm:DECOMP11}

Let $\mathcal U = \{ T_{\lambda}: \lambda \in \Lambda \}$ be an infinite set of doubly commuting contractions acting on a Hilbert space $\HS$ and let there be infinitely many non-atoms in $\mathcal U$. If $\mathcal F_{\infty}=\{(\mathcal A_{\alpha}\,, \, \mathcal H_{\alpha}): \alpha \in \mathcal M_{\Lambda}  \}$, where $\mathcal A_{\alpha}=\{ T_{\lambda}|_{\HS_{\alpha}}\,: \, \lambda \in \Lambda \}$ for each $\alpha \in \mathcal M_{\Lambda}$ and $\mathcal M_{\Lambda}$ is the set of all functions $f:\Lambda \rightarrow \{A1\,,\,A2\}$, then we have the following.

\begin{enumerate}

\item $\HS$ admits an orthogonal decomposition $\HS= \oplus_{\alpha \in \mathcal M_{\Lambda}} \;  \HS_{\alpha}$ such that each $\HS_{\alpha}$ is a joint reducing subspace for $\mathcal U$.

\item For each $\alpha \in \mathcal M_{\Lambda}$, the components of operator tuple $\{ T_{\lambda}|_{\HS_{\alpha}}: \lambda \in \Lambda \}$ are all atoms.

\item There is exactly one element say $(\mathcal A_1, \HS_1)$ in $\mathcal F_{\infty}$ such that the components of $\mathcal A_1$ are all atoms of type $A1$ and this is determined by the constant function $f_1:\Lambda \rightarrow \{ A1,A2 \}$, defined by $f_1(\lambda)=A1$ for all $\lambda\in \Lambda$.

\item There is exactly one element say $(\mathcal A_{2^{|\Lambda|}}, \HS_{2^{|\Lambda|}})$ in $\mathcal F_{\infty}$ such that the components of $\mathcal A_{2^{|\Lambda|}}$ are all atoms of type $A2$ and this is determined by the constant function $f_{2^{\Lambda}}:\Lambda \rightarrow \{ A1,A2 \}$, defined by $f_{2^{\Lambda}}(\lambda)=A2$ for all $\lambda\in \Lambda$.

\item The cardinality of the set $\mathcal F_{\infty} = \{ (\mathcal A_{\alpha}, \HS_{\alpha}) : \alpha \in \mathcal M_{\Lambda} \}$ is $2^{|\Lambda|}$,  where $|\Lambda|$ is the cardinality of the set $\Lambda$.

\end{enumerate}

One or more members of $\{ \HS_{\alpha}\,:\; \alpha \in \mathcal M_{\Lambda} \}$ may coincide with the trivial subspace $\{0\}$.

\end{thm}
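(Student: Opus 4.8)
The plan is to transport the entire bookkeeping apparatus developed for the countable case (the construction leading up to Theorem \ref{thm:DECOMP1}) to an arbitrary index set $\Lambda$, the only genuinely new ingredient being the replacement of the countable recursion $1\to 2\to 3\to\cdots$ by a transfinite one. First I would apply the well-ordering theorem to well-order $\Lambda$, so that $\Lambda$ is identified with an ordinal and the phrase ``the next center'' acquires meaning at every stage, including limit stages. The engine throughout remains Theorem \ref{thm:key}: whenever a member $T_\lambda$ is split by its canonical decomposition $T_\lambda = T_{\lambda 1}\oplus T_{\lambda 2}$ relative to $\HS = \HS_1\oplus\HS_2$, every other member of $\mathcal U$ doubly commutes with $T_\lambda$ and is therefore reduced by both $\HS_1$ and $\HS_2$; so a single canonical decomposition splits the whole tuple at once, and each piece again consists of doubly commuting contractions. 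This is precisely what licenses iterating the construction.

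I would then build the poset exactly as before. For each ordinal stage $\mu$ I form the family $\mathcal F_\mu$ of pairs $(\mathcal A,\HS_{\mathcal A})$ produced by successively splitting along the first $\mu$ centers, set $\mathcal F_\Lambda = \bigcup_\mu \mathcal F_\mu$, and order it by $(\mathcal A,\HS_{\mathcal A})\le(\mathcal B,\HS_{\mathcal B})$ iff $\HS_{\mathcal B}\subseteq\HS_{\mathcal A}$ is joint reducing for $\mathcal A$ with $\mathcal B=\mathcal A|_{\HS_{\mathcal B}}$. A maximal totally ordered set $\mathcal G$ is now a transfinite chain, with maximal element $(\mathcal A_{\mathcal G},\HS_{\mathcal G})$ where $\HS_{\mathcal G}=\bigcap_{(\mathcal A,\HS_{\mathcal A})\in\mathcal G}\HS_{\mathcal A}$ and $\mathcal A_{\mathcal G}=\mathcal U|_{\HS_{\mathcal G}}$, and $\mathcal F_\infty$ collects all such maximal elements. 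I would then reprove the analogues of Lemmas \ref{lem:F1}--\ref{lem:F5}: each $T_\lambda|_{\HS_{\mathcal G}}$ is an atom because $T_\lambda$ is split into an atom at the stage where $\lambda$ is chosen as center and its type is frozen thereafter (part (2)); distinct chains correspond to distinct functions $f\colon\Lambda\to\{A1,A2\}$ and thus to differently-typed maximal elements (the analogue of Lemma \ref{lem:F5}); and, as in Lemma \ref{lem:F3}, the assignment $f\mapsto(\mathcal A_f,\HS_f)$ is a bijection onto $\mathcal F_\infty$. Counting functions gives $|\mathcal F_\infty|=2^{|\Lambda|}$ (part (5)), and the two constant functions $f\equiv A1$ and $f\equiv A2$ pick out the maximal totally-unitary and totally-c.n.u.\ pieces of parts (3) and (4). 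The conceptually cleaner route I would actually take is to record that the canonical-decomposition projections $\{P_\lambda\}_{\lambda\in\Lambda}$, where $P_\lambda$ is the projection onto the maximal reducing subspace on which $T_\lambda$ is unitary, form a \emph{commuting} family: by Theorem \ref{thm:key} each $P_\mu$ reduces $T_\lambda$ and hence commutes with $P_\lambda$. Then $\HS_f$ is simply the range of the infimum (the strong limit of finite products) of the commuting projections $P_\lambda$ or $I-P_\lambda$ prescribed by $f$.

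The genuine obstacle is part (1), the completeness $\HS=\bigoplus_{\alpha\in\mathcal M_\Lambda}\HS_\alpha$, i.e.\ the analogue of Lemma \ref{lem:F6}. Pairwise orthogonality of the $\HS_\alpha$ is easy (two distinct functions differ at some $\lambda$, where the pieces sit in the orthogonal ranges of $P_\lambda$ and $I-P_\lambda$), so everything hinges on showing that no nonzero $h\in\HS$ is orthogonal to every $\HS_\alpha$. The natural argument fixes such an $h$, observes that at each stage $\mu$ the space $\HS$ is the orthogonal sum of the stage-$\mu$ pieces so that the corresponding partial sums of projections reconstruct $h$ exactly, and attempts to push this through to the transfinite limit. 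The delicate point---and where I expect the real work, since the sequential and monotone-convergence arguments available in the countable case no longer apply verbatim---is the interchange of the direct sum over the (possibly uncountably many) branches with the intersections defining the individual $\HS_{\mathcal G}$: along any single chain the projections onto $\HS_{\mathcal A}$ do decrease strongly to the projection onto the limit $\HS_{\mathcal G}$, but one must guarantee that summing these limits over all branches still recovers $h$ rather than losing mass in the passage to the limit. I would try to secure this by transfinite induction on $\mu$, showing that the total projection onto the union of the stage-$\mu$ pieces equals the identity at every stage and that this is preserved at limit ordinals via the commutativity of the $P_\lambda$ and the strong convergence of the associated net of finite products; controlling that limit step is the crux of the whole argument.
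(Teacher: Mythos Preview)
Your primary approach---well-order $\Lambda$, run the canonical-decomposition recursion transfinitely with Theorem \ref{thm:key} as the splitting engine, build the poset of partial splits, and pass to maximal chains---is exactly the paper's route, though the paper is far terser than you: its entire argument for the general case is a single paragraph asserting that, once the axiom of choice supplies successive centers, the countable construction and Lemmas \ref{lem:F1}--\ref{lem:F7} go through ``in an analogous manner,'' with no separate treatment of limit stages or of the completeness step you rightly isolate as the crux. Your alternative formulation via the commuting family $\{P_\lambda\}_{\lambda\in\Lambda}$ of canonical-decomposition projections (commuting by Theorem \ref{thm:key}) is \emph{not} in the paper and is the cleaner of the two approaches: it collapses the transfinite bookkeeping into the spectral theory of a commuting family of orthogonal projections, makes $\HS_f$ the range of a well-defined infimum, and turns the completeness assertion $\HS=\bigoplus_f \HS_f$ into a statement about the joint spectral resolution of the $P_\lambda$, where the limit-ordinal difficulty you describe dissolves. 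The paper's chain-and-intersection description, while ultimately equivalent, leaves that passage implicit.
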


As a special case of Theorem \ref{thm:DECOMP11}, we have the following analogue of Theorem \ref{thm:Wold2}, an Wold decomposition for an arbitrary infinite set of doubly commuting isometries.

\begin{thm} \label{thm:WOLD11}

Let $\mathcal V = \{ V_{\eta}: \eta \in I \}$ be an infinite set of doubly commuting isometries acting on a Hilbert space $\HS$ and let there be infinitely many isometries in $\mathcal V$ which are neither unitaries nor pure isometries. If $\mathcal F_{\infty}=\{(\mathcal A_{\gamma}\,, \, \mathcal H_{\gamma}): \gamma \in \mathcal M_I  \}$, where $\mathcal A_{\gamma}=\{ T_{\eta}|_{\HS_{\gamma}}\,: \, \eta \in I \}$ for each $\gamma \in \mathcal M_I$ and $\mathcal M_I$ is the set of all functions $h:I \rightarrow \{A1\,,\,B1\}$, then we have the following.

\begin{enumerate}

\item $\HS$ admits an orthogonal decomposition $\HS= \oplus_{\gamma \in \mathcal M_I} \;  \HS_{\gamma}$ such that each $\HS_{\gamma}$ is a joint reducing subspace for $\mathcal V$.

\item For each $\gamma \in \mathcal M_I$, the operator tuple $\{ T_{\eta}|_{\HS_{\gamma}}: \eta \in I \}$ consist of unitaries and pure isometries.

\item There is exactly one element say $(\mathcal A_1, \HS_1)$ in $\mathcal F_{\infty}$ such that the components of $\mathcal A_1$ are all unitaries and this is determined by the constant function $f_1:I \rightarrow \{ A1,B1 \}$ defined by $f_1(\eta)=A1$ for all $\eta \in I$.

\item There is exactly one element say $(\mathcal A_{2^{|I|}}, \HS_{2^{|I|}})$ in $\mathcal F_{\infty}$ such that the components of $\mathcal A_{2^{|I|}}$ are all pure isometries and this is determined by the constant function $f_{2^{|I|}}:I \rightarrow \{ A1,B1 \}$ defined by $f_{2^{|I|}}(\eta)=B1$ for all $\eta\in I$.

\item The cardinality of the set $\mathcal F_{\infty} = \{ (\mathcal A_{\gamma}, \HS_{\gamma}) : \gamma \in \mathcal M_I \}$ is $2^{|I|}$, where $|I|$ is the cardinality of the set $I$.

\end{enumerate}

One or more members of $\{ \HS_{\gamma}\,:\; \gamma \in \mathcal M_I \}$ may coincide with the trivial subspace $\{0\}$.

\end{thm}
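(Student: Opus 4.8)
The plan is to obtain Theorem \ref{thm:WOLD11} as a direct specialization of Theorem \ref{thm:DECOMP11}, using the elementary fact that for an isometry the canonical decomposition coincides with the von Neumann--Wold decomposition. Every isometry is a contraction, so $\mathcal V = \{V_\eta : \eta \in I\}$ is a doubly commuting family of contractions and Theorem \ref{thm:DECOMP11} applies with $\Lambda = I$. First I would check that the two hypotheses agree: an isometry is a non-atom precisely when it is neither a unitary nor a c.n.u. contraction, and by Theorem \ref{Wold:decomposition} a c.n.u. isometry has trivial unitary part and is therefore a pure isometry. Hence an isometry is a non-atom exactly when it is neither a unitary nor a pure isometry, so the assumption of infinitely many such isometries is exactly the assumption of infinitely many non-atoms demanded by Theorem \ref{thm:DECOMP11}.

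Applying Theorem \ref{thm:DECOMP11} to $\mathcal V$ gives an orthogonal decomposition $\HS = \oplus_{\alpha} \HS_\alpha$ into joint reducing subspaces on each of which every restriction $V_\eta|_{\HS_\alpha}$ is an atom of type $A1$ or $A2$, indexed by the functions $I \rightarrow \{A1, A2\}$. The key step is to note that each restriction is again an isometry: if $h \in \HS_\alpha$ then $\|V_\eta|_{\HS_\alpha} h\| = \|V_\eta h\| = \|h\|$, since $\HS_\alpha$ reduces $V_\eta$. Therefore an atom of type $A1$ is a unitary, while an atom of type $A2$ is a c.n.u. isometry, which by Theorem \ref{Wold:decomposition} is a pure isometry. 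Thus on each $\HS_\alpha$ every $V_\eta|_{\HS_\alpha}$ is a unitary (type $A1$) or a pure isometry (type $B1$), which yields conclusions $(1)$ and $(2)$.

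Finally I would match the index sets and transcribe the remaining conclusions. The identification $A2 \leftrightarrow B1$ gives a bijection between the functions $I \rightarrow \{A1, A2\}$ arising above and the set $\mathcal M_I$ of functions $I \rightarrow \{A1, B1\}$; under this bijection part $(5)$ of Theorem \ref{thm:DECOMP11} becomes $|\mathcal F_\infty| = 2^{|I|}$, giving conclusion $(5)$. The constant function $A1$ selects the unique element all of whose components are unitaries, yielding conclusion $(3)$, and the constant function $A2$, i.e. $B1$ after relabelling, selects the unique element all of whose components are c.n.u. isometries, equivalently pure isometries, yielding conclusion $(4)$. I do not expect any genuine obstacle: the entire argument is a specialization of Theorem \ref{thm:DECOMP11}, and the only substantive point is the identification of a c.n.u. isometry with a pure isometry, which is furnished by the von Neumann--Wold theorem.
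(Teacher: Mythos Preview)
Your proposal is correct and matches the paper's approach exactly: the paper states Theorem~\ref{thm:WOLD11} as a special case of Theorem~\ref{thm:DECOMP11} without further proof, and your argument spells out precisely why this specialization works, namely that a c.n.u.\ isometry is a pure isometry by the von Neumann--Wold theorem, so the atom types $A1,A2$ for isometries become $A1,B1$.
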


\vspace{0.5cm}

\section{Decomposition of infinitely many doubly commuting c.n.u. contractions} \label{sec:7}

\vspace{0.4cm}

\noindent If we follow the same algorithm as described above, we can easily obtain a canonical decomposition of an infinite family of doubly commuting c.n.u. contractions, whence every member will orthogonally decomposed into pure isometries and c.n.i. contractions. Needless to say that this is an extended version of the Theorem \ref{thm:decomp3} and an analogue of Theorem \ref{thm:DECOMP1} for any infinite set of doubly commuting c.n.u. contractions $\mathcal W =\{ T_{\mu}:\mu \in J \}$. Once again we shall assume without loss of generality that there are infinitely many non-fundamental c.n.u. contractions (i.e. pure isometries or c.n.i. contractions) in $\mathcal W$. It is merely mentioned that each split of $\mathcal W$ will consist of fundamental c.n.u. contractions only. Here we shall denote the limiting set by $\mathcal G_{\infty}$ (which we had denoted by $\mathcal F_{\infty}$ in case of doubly commuting contractions).

\begin{thm} \label{thm:DECOMP12}

Let $\mathcal W =\{ T_{\mu}:\mu \in J \}$ be an infinite set of doubly commuting c.n.u. contractions acting on a Hilbert space $\HS$ and let there be infinitely many non-fundamental c.n.u. contractions in $\mathcal W$. If $\mathcal G_{\infty}=\{(\mathcal A_{\beta}\,, \, \mathcal H_{\beta}): \beta \in \mathcal M_J  \}$, where $\mathcal A_{\beta}= \mathcal W|_{\mathcal H_{\beta}}$ and $\mathcal M_J$ is the set of all functions $g:J \rightarrow \{B1\,,\,B2\}$, then we have the following. 

\begin{enumerate}

\item $\HS$ admits an orthogonal decomposition $\HS= \oplus_{\beta \in \mathcal M_J} \;  \HS_{\beta}$ such that each $\HS_{\beta}$ is a joint reducing subspace for $\mathcal W$.

\item For each $\beta\in \mathcal M_J$, the components of $\{ T_{\mu}|_{\HS_{\beta}}\,:\, \mu \in J \}$ are all fundamental c.n.u. contractions.

\item There is exactly one element say $(\mathcal A_1, \HS_1)$ in $\mathcal G_{\infty}$ such that the components of $\mathcal A_1$ are all fundamental c.n.u. contractions of type $B1$ and this is determined by the constant function $g_1:J \rightarrow \{ B1,B2 \}$, defined by $g(\mu)=B1$ for all $\mu\in J$.

\item There is exactly one element say $(\mathcal A_{2^{|J|}}, \HS_{2^{|J|}})$ in $\mathcal G_{\infty}$ such that the components of $\mathcal A_{2^{|J|}}$ are all fundamental c.n.u. contractions of type $B2$ and this is determined by the constant function $g_{2^{|J|}}:\mathbb N \rightarrow \{ B1,B2 \}$, defined by $g_{2^{|J|}}(\mu)=B2$ for all $\mu \in J$.

\item The cardinality of the set $\mathcal G_{\infty} $ is $2^{|J|}$, where $|J|$ is the cardinality of the set $J$.

\end{enumerate}

One or more members of $\{ \HS_{\beta}\,:\; \beta \in \mathcal M_J \}$ may coincide with the trivial subspace $\{0\}$.

\end{thm}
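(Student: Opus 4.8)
The plan is to run the very same transfinite decomposition algorithm that underlies Theorem \ref{thm:DECOMP11}, but with two substitutions dictated by the c.n.u. setting: in place of the canonical decomposition of a contraction I would use Levan's decomposition (Theorem \ref{thmlv1}), which splits a c.n.u. contraction into a pure isometry part and a c.n.i. part, and in place of Theorem \ref{thm:key} I would use its c.n.u. analogue, Theorem \ref{thm:key1}. Under this dictionary the ``atoms'' of Theorem \ref{thm:DECOMP11} become the fundamental c.n.u. contractions, and the labels $A1,A2$ become $B1,B2$; thus $\mathcal M_J$ is the set of functions $g:J\to\{B1,B2\}$ and the target set $\mathcal G_\infty$ plays the role of $\mathcal F_\infty$. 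The finite-stage prototype is already recorded as Theorem \ref{thm:decomp3}, so the work is entirely in organizing the passage to the infinite index set $J$.

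First I would set up the decomposition tree. Fixing a center means choosing some $T_\mu\in\mathcal W$ that is a non-fundamental c.n.u. contraction and applying Theorem \ref{thmlv1} to write $T_\mu=T_\mu'\oplus T_\mu''$ with respect to $\HS=\HS'\oplus\HS''$, where $T_\mu'$ is a pure isometry and $T_\mu''$ is c.n.i. Because every other $T_\nu$ doubly commutes with the c.n.u. contraction $T_\mu$, Theorem \ref{thm:key1} guarantees that $\HS'$ and $\HS''$ reduce $T_\nu$ for all $\nu$, so the entire family $\mathcal W$ splits orthogonally along this decomposition. The point that keeps the algorithm alive is a preservation observation: the restriction of a c.n.u. contraction to a reducing subspace is again c.n.u., and double commutativity is inherited by restrictions to a common reducing subspace. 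Hence after each split the two restricted families are again doubly commuting families of c.n.u. contractions, so the hypotheses of Theorem \ref{thm:key1} persist and the process may be iterated indefinitely, decomposing one non-fundamental center at a time.

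Next I would assemble the combinatorial scaffolding exactly as in the countable case. Choosing centers by the axiom of choice as in the general case of Section \ref{sec:6}, I would form the collection of all pairs $(\mathcal A,\HS_{\mathcal A})$ produced at the successive stages, order them by declaring $(\mathcal A,\HS_{\mathcal A})\leq(\mathcal B,\HS_{\mathcal B})$ whenever $\HS_{\mathcal B}\subseteq\HS_{\mathcal A}$ reduces $\mathcal A$ and $\mathcal B=\mathcal A|_{\HS_{\mathcal B}}$, and take the maximal elements of maximal totally ordered sets as the intersections of the descending subspace chains. The analogues of Lemmas \ref{lem:F1}--\ref{lem:F5} then go through verbatim: a maximal element's tuple consists of fundamental c.n.u. contractions only (its first finitely many coordinates are fundamental at every finite stage and restriction to the intersection preserves this), different chains yield different maximal elements, and each $g:J\to\{B1,B2\}$ is matched by a unique maximal element whose $\mu$-th coordinate has type $g(\mu)$, giving $|\mathcal G_\infty|=2^{|J|}$. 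The two constant functions produce the two distinguished pieces asserted in items $(3)$ and $(4)$.

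The main obstacle is the exhaustion statement, i.e. the analogue of Lemma \ref{lem:F6}: that $\HS=\oplus_{\beta\in\mathcal M_J}\HS_\beta$ and correspondingly $\mathcal W=\oplus_\beta\mathcal A_\beta$. Here I would argue by contradiction, mirroring the countable case. At each finite stage the finitely many subspaces obtained so far reduce $\mathcal W$ and partition $\HS$ orthogonally, so the partial projections of any fixed $h\in\HS$ sum to $h$; if $h$ were orthogonal to every limiting piece $\HS_\beta$, then its projection onto the terminal subspace of every maximal chain would vanish, forcing the stagewise partial sums to tend to $0$ and contradicting that each such sum equals $h$. The genuinely new difficulty relative to Section \ref{sec:6} is transfinite: when $J$ is uncountable the chains have uncountable order type, so one must track the descending subspaces through limit ordinals and verify that the intersections interact correctly with the projection estimates. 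I expect this transfinite bookkeeping -- rather than any operator-theoretic input -- to be the delicate part, since all the operator theory has already been packaged into Theorems \ref{thmlv1} and \ref{thm:key1}.
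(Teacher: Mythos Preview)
Your proposal is correct and mirrors the paper's own approach exactly: the paper gives no detailed argument for Theorem \ref{thm:DECOMP12} but simply directs the reader to rerun the algorithm of Section \ref{sec:6} with Levan's decomposition (Theorem \ref{thmlv1}) replacing the canonical decomposition and Theorem \ref{thm:key1} replacing Theorem \ref{thm:key}, which is precisely what you have spelled out. Your closing caveat about the transfinite bookkeeping in the uncountable case is a fair observation, but the paper itself does not treat this point any more carefully in the passage from Theorem \ref{thm:DECOMP1} to Theorem \ref{thm:DECOMP11}, so your sketch is at least as complete as the original.
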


\vspace{1cm}

\end{document}